\documentclass{article}

\def\TARGET{1}

\usepackage{xcolor}
\PassOptionsToPackage{table, svgnames, dvipsnames}{xcolor}

\usepackage[final]{neurips_2024}

\usepackage{graphicx}

\usepackage{amssymb,amsmath,amsfonts,amsthm,commath}
\usepackage[algo2e,ruled]{algorithm2e}
\usepackage{hyperref}
\usepackage{multirow}
\usepackage[capitalise]{cleveref}
\usepackage{enumitem}
\usepackage{wrapfig}
\usepackage{colortbl}

\theoremstyle{plain}
\newtheorem{theorem}{Theorem}

\newtheorem{lemma}{Lemma}
\newtheorem{corollary}{Corollary}
\theoremstyle{definition}

\theoremstyle{remark}
\newtheorem{remark}{Remark}

\renewcommand{\t}{\text}
\newcommand{\op}[1]{\operatorname{#1}}
\newcommand{\C}[1]{{\mathcal{#1}}} \newcommand{\B}[1]{{\mathbb{#1}}} \newcommand{\BF}[1]{{\mathbf{#1}}}  \newcommand{\F}[1]{{\mathfrak{#1}}}

\newcommand{\bra}{\langle}
\newcommand{\ket}{\rangle}

\newcommand{\x}{\BF{x}}
\newcommand{\y}{\BF{y}}
\newcommand{\z}{\BF{z}}
\newcommand{\vv}{\BF{v}}
\newcommand{\uu}{\BF{u}}
\newcommand{\oo}{\BF{o}}
\newcommand{\g}{\BF{g}}

\renewcommand{\cite}[1]{\citep{#1}}

\makeatletter
\def\blfootnote{\xdef\@thefnmark{}\@footnotetext}
\makeatother

\title{
From Linear to Linearizable Optimization:
A Novel Framework with Applications to Stationary and Non-stationary DR-submodular Optimization
}
\date{}
\author{
\begin{tabular}{c}
Mohammad Pedramfar \\
{\normalfont McGill University and Mila
\thanks{Work done while at Purdue University}
} \\
\texttt{mohammad.pedramfar@mila.quebec}
\end{tabular}
\And
\begin{tabular}{c}
Vaneet Aggarwal \\
{\normalfont Purdue University} \\
\texttt{vaneet@purdue.edu}
\end{tabular}
}

\begin{document}

\maketitle

\begin{abstract}
This paper introduces the notion of upper-linearizable/quadratizable functions, a class that extends concavity and DR-submodularity in various settings, including monotone and non-monotone cases over different types of convex sets. A general meta-algorithm is devised to convert algorithms for linear/quadratic maximization into ones that optimize upper-linearizable/quadratizable functions, offering a unified approach to tackling concave and DR-submodular optimization problems. The paper extends these results to multiple feedback settings, facilitating conversions between semi-bandit/first-order feedback and bandit/zeroth-order feedback, as well as between first/zeroth-order feedback and semi-bandit/bandit feedback. Leveraging this framework, new algorithms are derived using existing results as base algorithms for convex optimization, improving upon state-of-the-art results in various cases. Dynamic and adaptive regret guarantees are obtained for DR-submodular maximization, marking the first algorithms to achieve such guarantees in these settings. Notably, the paper achieves these advancements with fewer assumptions compared to existing state-of-the-art results, underscoring its broad applicability and theoretical contributions to non-convex optimization.
\end{abstract}

\if\TARGET0
    \bibliographystyle{plain}
\else
    \bibliographystyle{apalike}
\fi

\vspace{-.1in}
\section{Introduction}
\vspace{-.1in}

{\bf Overview: } The prominence of optimizing continuous adversarial $\gamma$-weakly up-concave functions {(with DR-submodular and concave functions as special cases)} has surged in recent years, marking a crucial subset within the realm of non-convex optimization challenges, particularly in the forefront of machine learning and statistics.
This problem has numerous real-world applications, such as revenue maximization, mean-field inference, recommendation systems \cite{bian2019optimal,hassani17_gradien_method_submod_maxim,mitra2021submodular,djolonga2014map,ito2016large,gu2023profit,li2023experimental}.
This problem is modeled as a repeated game between an optimizer and an adversary.
In each round, the optimizer selects an action, and the adversary chooses a $\gamma$-weakly up-concave reward function.
Depending on the scenario, the optimizer can then query this reward function either at any arbitrary point within the domain (called full information feedback) or specifically at the chosen action (called semi-bandit/bandit feedback), where the feedback can be noisy/deterministic.
The performance metric of the algorithm is measured with multiple regret notions - static adversarial regret, dynamic regret, and adaptive regret.
The algorithms for the problem are separated into the ones that use a projection operator to project the point to the closest point in the domain, and the projection-free methods that replace the projection with an alternative such as Linear Optimization Oracles (LOO) or Separation Oracles (SO).
This interactive framework introduces a range of significant challenges, influenced by the characteristics of the up-concave function (monotone/non-monotone), the constraints imposed, the nature of the queries, projection-free/projection-based algorithms, and the different regret definitions.

\begin{table}[t] \vspace{-.3in}
\small
\caption{Online up-concave maximization}
\vspace{-.1in}
\label{tbl:adv}
\begin{center}
\resizebox{.92\textwidth}{!}{
\begin{tabular}{ | c | c | c | c | c | c | c | c | c | c | c |}
\hline
$F$ & Set & \multicolumn{3}{|c|}{Feedback} & Reference & Appx. & \# of queries & $\log_T(\alpha\t{-regret})$ \\
\hline
\multirow{19}{*}{\rotatebox{90}{Monotone}}
& \multirow{12}*{\rotatebox{90}{$0 \in \C{K}$}}
  & \multirow{5}*{$\nabla F$}
    & \multirow{3}*{Full Information}
      & \multirow{3}*{stoch.}
        & \cite{zhang22_stoch_contin_submod_maxim} $\ddagger$ {\color{blue}(*)}
          & $1-e^{-\gamma}$ & $1$ & $1/2$ \\
& & & & & \cite{pedramfar23_unified_projec_free_algor_adver}
          & $1 - e^{-1}$ & $T^{\theta} (\theta \in [0, 1/2])$ & $2/3 - \theta/3$ \\
& & & & & {\color{blue}Corollary~\ref{cor:w-ada-reg}-c}
          &  $1 - e^{-\gamma}$ &  $1$ & $1/2$ \\
  \cline{4-9}
& & & \multirow{2}*{Semi-bandit}
      & \multirow{2}*{stoch.}
        & \cite{pedramfar23_unified_projec_free_algor_adver}
          & $1 - e^{-1}$ & - & $3/4$ \\
& & & & & {\color{blue}Corollary~\ref{cor:w-ada-reg}-c}
          & $1 - e^{-\gamma}$ & - & $2/3$ \\
  \cline{3-9}
& & \multirow{7}*{$F$}
    & \multirow{3}*{Full Information}
      & \multirow{1}*{det.}
        & {\color{blue}Corollary~\ref{cor:w-ada-reg}-c}
          & $1-e^{-1}$ & $2$ & $1/2$ \\
    \cline{5-9}
& & & & \multirow{2}*{stoch.}
        & \cite{pedramfar23_unified_projec_free_algor_adver}
          & $1-e^{-1}$ & $T^{\theta} (\theta \in [0, 1/4])$ & $4/5 - \theta/5$ \\
& & & & & {\color{blue}Corollary~\ref{cor:w-ada-reg}-c}
          & $1 - e^{-\gamma}$ & $1$ & $3/4$ \\
  \cline{4-9}
& & & \multirow{4}*{Bandit}
      & \multirow{2}*{det.}
        & \cite{wan23_bandit_multi_dr_submod_maxim} $\ddagger\ddagger$
          & $1-e^{-1}$ & - & $3/4$ \\
& & & & & \cite{zhang24_boost_gradien_ascen_contin_dr_maxim} $\ddagger${\color{blue}(*)}
          & $1-e^{-\gamma}$ & - & $4/5$ \\
  \cline{5-9}
& & & & \multirow{2}*{stoch.}
        & \cite{pedramfar23_unified_projec_free_algor_adver}
          & $1-e^{-1}$ & - & $5/6$ \\
& & & & & {\color{blue}Corollary~\ref{cor:w-ada-reg}-c}
          & $1 - e^{-\gamma}$ & - & $4/5$ \\
  \cline{2-9}
& \multirow{8}*{\rotatebox{90}{general}} & \multirow{4}*{$\nabla F$}
    & Full Information
      & stoch.
        & \cite{pedramfar23_unified_projec_free_algor_adver}
          & $1/2$ & $T^{\theta} (\theta \in [0, 1/2])$ & $2/3 - \theta/3$ \\
  \cline{4-9}
& & & \multirow{3}*{Semi-bandit}
      & \multirow{3}*{stoch.}
        & \cite{chen18_onlin_contin_submod_maxim}$\ddagger${\color{blue}(*)}
          & $\gamma^2/(1 + \gamma^2)$ & - & $1/2$ \\
& & & & & \cite{pedramfar23_unified_projec_free_algor_adver}
          & $1/2$ & - & $3/4$ \\
& & & & & {\color{blue}Corollary~\ref{cor:w-ada-reg}-b}
          & $\gamma^2/(1 + c\gamma^2)$ & - & $1/2$ \\
  \cline{3-9}
& & \multirow{4}*{$F$}
    & \multirow{2}*{Full Information}
      & det.
        & {\color{blue}Corollary~\ref{cor:w-ada-reg}-b}
          & $\gamma^2/(1 + c\gamma^2)$ & $2$ & $1/2$ \\
      \cline{5-9}
& & & & stoch.
        & \cite{pedramfar23_unified_projec_free_algor_adver}
          & $1/2$ & $T^{\theta} (\theta \in [0, 1/4])$ & $4/5 - \theta/5$ \\
  \cline{4-9}
& & & \multirow{2}*{Bandit}
      & \multirow{2}*{stoch.}
        & \cite{pedramfar23_unified_projec_free_algor_adver}
          & $1/2$ & - & $5/6$ \\
& & & & & {\color{blue}Corollary~\ref{cor:w-ada-reg}-b}
          & $\gamma^2/(1 + c\gamma^2)$ & - & $3/4$ \\
\hline
\multirow{11}*{\rotatebox{90}{Non-Monotone}}
& \multirow{11}*{\rotatebox{90}{general}}
  & \multirow{5}*{$\nabla F$}
    & \multirow{3}*{Full Information}
      & \multirow{3}*{stoch.}
        & \cite{pedramfar23_unified_projec_free_algor_adver}
          & $(1-h)/4$ & $T^{\theta} (\theta \in [0, 1/2])$ & $2/3 - \theta/3$ \\
& & & & & \cite{zhang24_boost_gradien_ascen_contin_dr_maxim} $\ddagger${\color{blue}(*)}
          & $(1-h)/4$ & $1$ & $1/2$ \\
& & & & & {\color{blue}Corollary~\ref{cor:w-ada-reg}-d}
          & $(1-h)/4$ & $1$ & $1/2$ \\
  \cline{4-9}
& & & \multirow{2}*{Semi-bandit}
      & \multirow{2}*{stoch.}
        & \cite{pedramfar23_unified_projec_free_algor_adver}
          & $(1-h)/4$  & - & $3/4$ \\
& & & & & {\color{blue}Corollary~\ref{cor:w-ada-reg}-d}
          & $(1-h)/4$  & - & $2/3$ \\
  \cline{3-9}
& & \multirow{6}*{$F$}
    & \multirow{3}*{Full Information}
      & \multirow{1}*{det.}
        & {\color{blue}Corollary~\ref{cor:w-ada-reg}-d}
          & $(1-h)/4$ & $2$ & $1/2$ \\
        \cline{5-9}
& & & & \multirow{2}*{stoch.}
        & \cite{pedramfar23_unified_projec_free_algor_adver}
          & $(1-h)/4$ & $T^{\theta} (\theta \in [0, 1/4])$ & $4/5 - \theta/5$ \\
& & & & & {\color{blue}Corollary~\ref{cor:w-ada-reg}-d}
          & $(1-h)/4$  & $1$ & $3/4$ \\
  \cline{4-9}
& & & \multirow{3}*{Bandit}
      & \multirow{1}*{det.}
        & \cite{zhang24_boost_gradien_ascen_contin_dr_maxim} $\ddagger${\color{blue}(*)}
          & $(1-h)/4$ & - & $4/5$ \\
    \cline{5-9}
& & & & \multirow{2}*{stoch.}
        & \cite{pedramfar23_unified_projec_free_algor_adver}
          & $(1-h)/4$ & - & $5/6$ \\
& & & & & {\color{blue}Corollary~\ref{cor:w-ada-reg}-d}
          & $(1-h)/4$ & - & $4/5$ \\
\hline
\end{tabular}
}
\end{center}
{~\\\small
This table compares different static regret results for the online up-concave maximization.
The logarithmic terms in regret are ignored.
Here $h := \min_{\z \in \C{K}} \|\z\|_\infty$.
Our algorithm is projection-free and use a separation oracle.
The rows marked with $\ddagger$ use gradient ascent, requiring potentially computationally expensive projections.
The rows marked with ${\color{blue}(*)}$ denote results that could be considered special case of our framework. In particular, we obtain those results if we use Online Gradient Ascent instead of $\mathtt{SO\textrm{-}OGA}$ as the base algorithm in Corollary~\ref{cor:w-ada-reg}.
Note that the result of \cite{wan23_bandit_multi_dr_submod_maxim}, marked by $\ddagger \ddagger$, uses a convex optimization subroutine in each iteration, which could potentially be more expensive than projection and therefore not considered a projection-free result.
\textbf{It is also the only existing result, in all the tables, that outperforms ours.} We note that stochastic results can be used in deterministic, and bandit/semi-bandit in full information and thus cases where our result is not added, our result improves state-of-the-art projection free result because of the result in less information setup.

All results assume that functions are Lipschitz.
Except for our results on monotone functions over general convex sets, all results also assume differentiability.
All previous results assume that functions are DR-submodular, while we only require up-concavity.
Results of~\cite{pedramfar23_unified_projec_free_algor_adver} and~\cite{wan23_bandit_multi_dr_submod_maxim} also assume functions are smooth, i.e., their gradients are Lipschitz.
}
\vspace{-.2in}
\end{table}

In this paper, we present a comprehensive approach to solving adversarial up-concave optimization problems, encompassing different feedback types (including bandit, semi-bandit and full-information feedback), characteristics of the up-concave function and constraint region, projection-free/projection-based algorithms, and regret definitions.
While the problem has been studied in many special cases, the main contribution of this work is a framework that is based on a novel notion of the function class being upper-linearizable (or upper-quadratizable).
We design a meta-algorithm that converts certain algorithms designed for online linear maximization to algorithms capable of handling upper-linearizable function classes.
This allows us to reduce the problem of up-concave maximization in three different settings to online linear maximization and obtain corresponding regret bounds.
In particular, our results include monotone $\gamma$-weakly up-concave functions over general convex set, monotone $\gamma$-weakly up-concave functions over convex sets containing the origin and non-monotone up-concave functions. While the above result is for first order feedback, we then derive multiple results that increase the applicability of the above results.
We extend the applicability of $\mathtt{FOTZO}$ and $\mathtt{STB}$ algorithm introduced in~\cite{pedramfar24_unified_framew_analy_meta_onlin_convex_optim} to our setting which allows us to convert algorithms for first-order/semi-bandit feedback into algorithms for zeroth-order/bandit feedback.
We also design a meta-algorithm that allows us to convert algorithms that require full-information feedback into algorithms that only require semi-bandit/bandit feedback.

We demonstrate the usefulness of results through two applications as described in the following.  In the first application, we use the $\mathtt{SO\textrm{-}OGD}$ Algorithm in~\cite{garber22_new_projec_algor_onlin_convex} as the base algorithm for online linear optimization, which is a projection-free algorithm.
Using this, we first obtain the adaptive regret (and therefore also static regret) guarantees for the three setups of DR-submodular (or more generally, up-concave) optimization with semi-bandit feedback/first order feedback in the respective cases.
Then, the meta-algorithms for conversion of first-order/semi-bandit to zeroth-order/bandit are used to get result with zeroth-order/bandit feedback.
In the cases where the algorithms are full-information and not (semi-)bandit, we use another meta-algorithm to obtain algorithms in (semi-)bandit feedback setting. In the next application, we use the ``Improved Ader" algorithm of~\cite{zhang18_adapt_onlin_learn_dynam_envir} which is a projection based algorithm providing dynamic regret guarantees for the convex optimization.
Afterwards, the same approach as above are used to obtain the results in the three scenarios of up-concave optimization with first-order feedback.

{\bf Technical Novelty: } The main technical novelties in this work are as follows.

\begin{enumerate}[left=10pt]

\item We proposes a novel notion of linearizable/quadratizable functions and extend the meta-algorithm framework of~\cite{pedramfar24_unified_framew_analy_meta_onlin_convex_optim} from convex functions to linearizable/quadratizable functions.
This allows us to relates a large class of algorithms and regret guarantees for optimization of linear/quadratic functions to that for linearizable/quadratizable functions.

\item We show that the class of quadratizable function optimization is general, and includes not only concave, but up-concave optimization in several cases.
For some of the cases, this proof uses a generalization of the idea of boosting (\cite{zhang22_stoch_contin_submod_maxim,zhang24_boost_gradien_ascen_contin_dr_maxim}) which was proposed for DR-submodular maximization, as mentioned in Corollaries~\ref{cor:dr_mono_zero:boosting} and~\ref{cor:dr_nonmono:boosting}.

\item We design a new meta-algorithm, namely $\mathtt{SFTT}$, that captures the idea of random permutations (sometimes referred to as blocking) as used in several papers such as~\cite{zhang19_onlin_contin_submod_maxim,zhang23_onlin_learn_non_submod_maxim,pedramfar23_unified_projec_free_algor_adver}.
While previous works used this idea in specific settings, our meta-algorithm is applicable in general settings.

\item We note the generality of the above results in this paper.
Our results are general in the following three aspects:

\noindent a) In this work, we improve results for projection-free static regret guarantees for DR-submodular optimization in all considered cases and obtain the first results for dynamic and adaptive regret.
Moreover, these guarantees follow from existing algorithms for the linear  optimization, using only the statement of the regret bounds and simple properties of the algorithms.

\noindent b)  We consider 3 classes of DR-submodular functions in this work.
However, to extend these results to another function class, all one needs to do is to (i) prove that the function class is quadratizable; and  (ii) provide an unbiased estimator of $\F{g}$ (as described in Equation~\ref{eq:quadratizable}).

\noindent c)  We consider 2 different feedback types in offline setting (first/zero order) and 4 types of feedback in the online setting (first/zero order and full-information/trivial query).
Converting results between different cases is obtained through meta-algorithms and guarantees for the meta-algorithms which only relies on high level properties of the base algorithms (See Theorems~\ref{thm:first-order-to-zero-order},~\ref{thm:stoch-fi-to-sb},~\ref{thm:first-order-to-det-zero-order} and~\ref{thm:online-to-offline})
\end{enumerate}

{\bf Key contributions: } The key contributions in this work are summarized as follows.

\begin{enumerate}[left=10pt]

\item
We formulate the notion of \textit{upper-quadratizable/upper-linearizeble} functions, which is a class that generalizes the notion of strong-concavity/concavity and also DR-submodularity in several settings.
In particular, we demonstrate the the following function classes are upper-quadratizable:
(i) monotone $\gamma$-weakly $\mu$-strongly DR-submodular functions with curvature $c$ over general convex sets,
(ii) monotone $\gamma$-weakly DR-submodular functions over convex sets containing the origin, and
(iii) non-monotone DR-submodular optimization over general convex sets.

\item
We provide a general meta-algorithm that converts algorithms for linear/quadratic maximization to algorithms that maximize upper-quadratizable functions.
This results is a unified approach to maximize both concave functions and DR-submodular functions in several settings.

\item
While the above provides results for semi-bandit feedback (for monotone  DR-submodular optimization over general convex sets) and first-order feedback (for monotone  DR-submodular optimization over  convex sets containing the origin, and non-monotone  DR-submodular optimization over general convex sets), the results could be extended to more general feedback settings.
Four meta algorithms are provided that relate semi-bandit/first-order feedback to bandit/zeroth order feedback; that relate; first order to deterministic zeroth order; and that relate first/zeroth order feedback to semi-bandit/bandit feedback.
Together they allow us to obtain results in 5 feedback settings (first/zeroth order full-information and semi-bandit/bandit; and deterministic zeroth order).
We also discuss a meta-algorithm to convert online results to offline guarantees.

\item
The above framework is applied using different algorithms as the base algorithms for linear optimization.
$\mathtt{SO\textrm{-}OGD}$~\cite{garber22_new_projec_algor_onlin_convex} is a projection-free algorithm using separation oracles that provides adaptive regret guarantees for online convex optimization.
We use our framework to cover the 18 cases in Table~\ref{tbl:adv}.
We improve the regret guarantees for the previous SOTA projection-free algorithms in all the cases.
If we also allow comparisons with the algorithms that are not projection-free, we still improve the SOTA results in 12 cases and match the SOTA in 5 cases.

\item
Using our framework, we convert online results using $\mathtt{SO\textrm{-}OGD}$ to offline results to obtain 6 projection free algorithms described in Table~\ref{tbl:offline}.
We improve the regret guarantees for the previous SOTA projection-free algorithms in all the cases, except for deterministic first order feedback where existing results are already SOTA.
If we also allow comparisons with the algorithms that are not projection-free, we still improve the SOTA results in 6 cases and match the SOTA in the remaining 3 cases.

\item
We use our framework to convert the adaptive regret guarantees of $\mathtt{SO\textrm{-}OGD}$ to obtain projection-free algorithms with adaptive regret bounds that cover all cases in Table~\ref{tbl:non-stationary}.
Our results are first algorithms with adaptive regret guarantee for online DR-submodular maximization.

\item
``Improved Ader"~\cite{zhang18_adapt_onlin_learn_dynam_envir} is an algorithm providing dynamic regret guarantees for online convex optimization.
We use our framework to obtain 6 algorithms which provide the dynamic regret guarantees as shown in Table~\ref{tbl:non-stationary}.
Our results are first algorithms with dynamic regret guarantee for online DR-submodular maximization.

\item
For monotone $\gamma$-weakly functions with bounded curvature over general convex sets, we improve the approximation ratio (See Lemma~\ref{lem:dr_mono_general:quad}).

\item
As mentioned in the descriptions of the tables, in all cases considered, whenever there is another existing result, we obtain our results using fewer assumptions than the existing SOTA.
\end{enumerate}

\section{Problem Setup and Definitions}\label{main_def}
For a set $\C{D} \subseteq \B{R}^d$, we define its \textit{affine hull}  $\op{aff}(\C{D})$ to be the set of $\alpha \x + (1 - \alpha) \y$ for all $\x, \y$ in $\C{K}$ and $\alpha \in \B{R}$.
The \textit{relative interior} of $\C{D}$ is defined as
$\op{relint}(\C{D}) := \{ \x \in \C{D} \mid \exists r > 0, \B{B}_r(\x) \cap \op{aff}(\C{D}) \subseteq \C{D} \}.$  For any $\uu \in \C{K}^T$, we define the path length
$P_T(\uu) := \sum_{i = 1}^{T-1} \| \uu_i - \uu_{i+1} \|.$
Given $\mu \geq 0$ and $0 < \gamma \leq 1$, we say a differentiable function $f : \C{K} \to \B{R}$ is \textit{$\mu$-strongly $\gamma$-weakly up-concave} if it is $\mu$-strongly $\gamma$-weakly concave along positive directions.
Specifically if, for all $\x \leq \y$ in $\C{K}$, we have
\[\gamma \left( \bra \nabla f(\y), \y - \x \ket + \frac{\mu}{2} \| \y - \x \|^2 \right)
\leq f(\y) - f(\x)
\leq \frac{1}{\gamma} \left( \bra \nabla f(\x), \y - \x \ket - \frac{\mu}{2} \| \y - \x \|^2 \right). \]

\begin{table}[t]
\vspace{-.4in}
\small
\caption{\small Offline up-concave maximization}
\label{tbl:offline}
\vspace{-.05in}
\begin{center}
\resizebox{.6\textwidth}{!}{
\begin{tabular}{ | c | c | c | c | c | c | c | c | c | c | }
\hline
$F$ & Set & \multicolumn{2}{|c|}{Feedback} & Reference & Appx. & Complexity \\
\hline
\multirow{15}{*}{\rotatebox{90}{Monotone}}
& \multirow{8}*{\rotatebox{90}{$0 \in \C{K}$}}
  & \multirow{4}*{$\nabla F$}
    & \multirow{4}*{stoch.}
      & \cite{mokhtari20_stoch_condit_gradien_method}
        & $1-e^{-\gamma}$ & $O(1/\epsilon^3)$ \\
& & & & \cite{hassani20_stoch_condit_gradien}
         & $1-e^{-\gamma}$ & $O(1/\epsilon^2)$ \\
& & & & \cite{zhang22_stoch_contin_submod_maxim} $\ddagger$
          & $1-e^{-\gamma}$ & $O(1/\epsilon^2)$ \\
& & & & {\color{blue}Corollary~\ref{cor:w-ada-reg}-c}
          & $1-e^{-\gamma}$ & $O(1/\epsilon^2)$ \\
  \cline{3-7}
& & \multirow{4}*{$F$}
    & \multirow{2}*{det.}
      & \cite{pedramfar24_unified_approac_maxim_contin_dr}
        & $1-e^{-\gamma}$ & $O(1/\epsilon^3)$ \\
& & & & {\color{blue}Corollary~\ref{cor:w-ada-reg}-c}
        & $1-e^{-\gamma}$ & $O(1/\epsilon^2)$ \\
    \cline{4-7}
& & & \multirow{2}*{stoch.}
      & \cite{pedramfar24_unified_approac_maxim_contin_dr}
        & $1-e^{-\gamma}$ & $O(1/\epsilon^5)$ \\
& & & & {\color{blue}Corollary~\ref{cor:w-ada-reg}-c}
        & $1-e^{-\gamma}$ & $O(1/\epsilon^4)$ \\
\cline{2-7}
& \multirow{7}*{\rotatebox{90}{general}}
  & \multirow{3}*{$\nabla F$}
    & \multirow{3}*{stoch.}
      & \cite{hassani17_gradien_method_submod_maxim}$\ddagger$
        & $\gamma^2/(1 + \gamma^2)$    & $O(1/\epsilon^2)$ \\
& & & & \cite{pedramfar24_unified_approac_maxim_contin_dr}
        & $\gamma^2/(1 + \gamma^2)$    & $\tilde{O}(1/\epsilon^3)$ \\
& & & & {\color{blue}Corollary~\ref{cor:w-ada-reg}-b}
          & $\gamma^2/(1 + c\gamma^2)$ & $O(1/\epsilon^2)$ \\
  \cline{3-7}
& & \multirow{4}*{$F$}
    & \multirow{2}*{det.}
      & \cite{pedramfar23_unified_approac_maxim_contin_dr_funct}
        & $\gamma^2/(1 + \gamma^2)$    &  $\tilde{O}(1/\epsilon^3)$ \\
& & & & {\color{blue}Corollary~\ref{cor:w-ada-reg}-b}
        & $\gamma^2/(1 + c\gamma^2)$ & $O(1/\epsilon^2)$ \\
    \cline{4-7}
& & & \multirow{2}*{stoch.}
      & \cite{pedramfar23_unified_approac_maxim_contin_dr_funct}
        & $\gamma^2/(1 + \gamma^2)$    &  $\tilde{O}(1/\epsilon^5)$ \\
& & & & {\color{blue}Corollary~\ref{cor:w-ada-reg}-b}
        & $\gamma^2/(1 + c\gamma^2)$ & $O(1/\epsilon^4)$ \\
\hline
\multirow{7}*{\rotatebox{90}{Non-Monotone}}
& \multirow{7}*{\rotatebox{90}{general}}
  & \multirow{3}*{$\nabla F$}
    & \multirow{3}*{stoch.}
      & \cite{pedramfar24_unified_approac_maxim_contin_dr}
        & $\frac{\gamma(1-\gamma h)}{\gamma' - 1} \left( \frac{1}{2} - \frac{1}{2^{\gamma'}} \right)$          & $O(1/\epsilon^3)$ \\
& & & & \cite{zhang24_boost_gradien_ascen_contin_dr_maxim} $\ddagger$
        & $(1-h)/4$ & $O(1/\epsilon^2)$ \\
& & & & {\color{blue}Corollary~\ref{cor:w-ada-reg}-d}
        & $(1-h)/4$ & $O(1/\epsilon^2)$ \\
\cline{3-7}
& & \multirow{4}*{$F$}
    & \multirow{2}*{det.}
      & \cite{pedramfar24_unified_approac_maxim_contin_dr}
        & $\frac{\gamma(1-\gamma h)}{\gamma' - 1} \left( \frac{1}{2} - \frac{1}{2^{\gamma'}} \right)$          & $O(1/\epsilon^3)$ \\
& & & & {\color{blue}Corollary~\ref{cor:w-ada-reg}-d}
        & $(1-h)/4$ & $O(1/\epsilon^2)$ \\
    \cline{4-7}
& & & \multirow{2}*{stoch.}
      & \cite{pedramfar24_unified_approac_maxim_contin_dr}
        & $\frac{\gamma(1-\gamma h)}{\gamma' - 1} \left( \frac{1}{2} - \frac{1}{2^{\gamma'}} \right)$          & $O(1/\epsilon^5)$ \\
& & & & {\color{blue}Corollary~\ref{cor:w-ada-reg}-d}
        & $(1-h)/4$ & $O(1/\epsilon^4)$ \\
\hline
\end{tabular}
}
\end{center}
{~\\ \small
This table compares the different results for the number of oracle calls (complexity) \textit{within the constraint set} for up-concave maximization.
We refer to~\cite{pedramfar24_unified_approac_maxim_contin_dr} for a more comprehensive table that includes results for deterministic first order feedback.
Here $h := \min_{\z \in \C{K}} \|\z\|_\infty$ and $\gamma' := \gamma + 1/\gamma$.

$\ddagger$ \cite{hassani17_gradien_method_submod_maxim},~\cite{zhang22_stoch_contin_submod_maxim} and~\cite{zhang24_boost_gradien_ascen_contin_dr_maxim} use gradient ascent, requiring potentially computationally expensive projections.

All previous results assume that functions are differentiable, DR-submodular, Lipschitz and smooth (i.e., their gradients are Lipschitz).
Result of~\cite{hassani20_stoch_condit_gradien} also requires the function Hessians to be Lipschitz. It also requires the density of the stochastic oracle to be known and the log of density to be 4 times differentiable with bounded 4th derivatives.
We only require the functions to be up-concave, differentiable and Lipschitz, except for results on monotone functions over general convex sets where we do not need differentiability.
}
\vspace{-.2in}
\end{table}

We say $\tilde{\nabla} f : \C{K} \to \B{R}^d$ is a \textit{$\mu$-strongly $\gamma$-weakly up-super-gradient} of $f$ if for all $\x \leq \y$ in $\C{K}$, the above holds with $\tilde{\nabla}$ instead of ${\nabla}$.
We say $f$ is $\mu$-strongly $\gamma$-weakly up-concave if it is continuous and it has a $\mu$-strongly $\gamma$-weakly up-super-gradient.
When it is clear from the context, we simply refer to $\tilde{\nabla} f$ as an up-super-gradient for $f$.
When $\gamma = 1$ and the above inequality holds for all $\x, \y \in \C{K}$, we say $f$ is $\mu$-strongly concave. A differentiable function $f : \C{K} \to \B{R}$ is called  \textit{$\gamma$-weakly continuous DR-submodular} if for all $\x \leq \y$, we have $\nabla f(\x) \geq \gamma \nabla f(\y)$.
It follows that any $\gamma$-weakly continuous DR-submodular functions is $\gamma$-weakly up-concave.
We refer to Appendix~\ref{apdx:setup} for more details.

Given a continuous monotone function $f : \C{K} \to \B{R}$, its curvature is defined as the smallest number $c \in [0, 1]$ such that
$
f(\y + \z) - f(\y) \geq (1 - c) (f(\x + \z) - f(\x)),
$
 for all $\x, \y \in \C{K}$ and $\z \geq 0$ such that $\x + \z, \y + \z \in \C{K}$.
We define the curvature of a function class $\BF{F}$ as the supremum of the curvature of functions in $\BF{F}$.

Online optimization problems can be formalized as a repeated game between an agent and an adversary.
The game lasts for $T$ rounds on a convex domain $\C{K}$ where $T$ and $\C{K}$ are known to both players.
In $t$-th round, the agent chooses an action $\x_t$ from an action set $\C{K} \subseteq \B{R}^d$, then the adversary chooses a loss function $f_t \in \BF{F}$ and a query oracle for the function $f_t$.
Then, for $1 \leq i \leq k_t$, the agent chooses a points $\y_{t, i}$ and receives the output of the query oracle. The precise definition of agent $(\Omega^\C{A}, \C{A}^{\t{action}}, \C{A}^{\t{query}})$ is given in Appendix \ref{apdx:setup}, with the  query oracle being any of stochastic/deterministic first/zeroth order or semi-bandit/bandit.

An adversary $\op{Adv}$ is a set of realized adversaries $\C{B} = (\C{B}_1, \cdots, \C{B}_T)$, where each $\C{B}_t$ maps  $(\x_1, \cdots, \x_t) \in \C{K}^T$ to $(f_t, \C{Q}_t)$ where $f_t \in \BF{F}$ and $\C{Q}_t$ is a query oracle for $f_t$.
Adversaries can be oblivious ($\C{B}_t$ are constant and independent of $(\x_1, \cdots, \x_t)$), weakly adaptive ($\C{B}_t$ are independent of $\x_t$), or fully adaptive (no restrictions).
We use $\op{Adv}^\t{f}_i(\BF{F})$ to denote the set of all possible realized adversaries with deterministic $i$-th order oracles.
If the oracle is instead stochastic and bounded by $B$, we use $\op{Adv}^\t{f}_i(\BF{F}, B)$ to denote such an adversary.
Finally, we use $\op{Adv}^\t{o}_i(\BF{F})$ and $\op{Adv}^\t{o}_i(\BF{F}, B)$ to denote all oblivious realized adversaries with $i$-th order deterministic and stochastic oracles, respectively. In order to handle different notions of regret with the same approach, for an agent $\C{A}$, adversary $\op{Adv}$, compact set $\C{U} \subseteq \C{K}^T$, approximation coefficient $0 < \alpha \leq 1$ and $1 \leq a \leq b \leq T$, we define \textit{regret} as $
	\C{R}_{\alpha, \op{Adv}}^{\C{A}}(\C{U})[a, b] :=
	\sup_{\C{B} \in \op{Adv}} \B{E} \left[ \alpha \max_{\uu = (\uu_1, \cdots, \uu_T) \in \C{U}} \sum_{t = a}^b f_t(\uu_t) - \sum_{t = a}^b f_t(\x_t) \right],$
where the expectation in the definition of the regret is over the randomness of the algorithm and the query oracle.
We use the notation $\C{R}_{\alpha, \C{B}}^{\C{A}}(\C{U})[a, b] := \C{R}_{\alpha, \op{Adv}}^{\C{A}}(\C{U})[a, b]$ when $\op{Adv} = \{\C{B}\}$ is a singleton.
We may drop $\alpha$ when it is equal to 1.
When $\alpha < 1$, we often assume that the functions are non-negative. \textit{Static adversarial regret} or simply \textit{adversarial regret} corresponds to $a = 1$, $b = T$ and $\C{U} = \C{K}_{\star}^T := \{(\x, \cdots, \x) \mid \x \in \C{K}\}$.
When $a = 1$, $b = T$ and $\C{U}$ contains only a single element then it is referred to as the \textit{dynamic regret} \cite{zinkevich03_onlin,zhang18_adapt_onlin_learn_dynam_envir}.
\textit{Adaptive regret} is defined as
$\max_{1 \leq a \leq b \leq T} \C{R}_{\alpha, \op{Adv}}^{\C{A}}(\C{K}_{\star}^T)[a, b]$ \cite{hazan09_effic}.
We drop $a$, $b$ and $\C{U}$ when the statement is independent of their value or their value is clear from the context.

\begin{table}[t] \small
\caption{Non-stationary up-concave maximization}
\label{tbl:non-stationary}
{\centering
\resizebox{\textwidth}{!}{
\begin{tabular}{ | c | c | c | c | c | c | c | c | c | c | }
\hline
$F$ & Set & \multicolumn{3}{|c|}{Feedback} & Reference & Appx. & regret type & $\alpha\t{-regret}$ \\
\hline
\multirow{14}{*}{\rotatebox{90}{Monotone}}
& \multirow{8}*{\rotatebox{90}{$0 \in \C{K}$}}
  & \multirow{3}*{$\nabla F$}
    & \multirow{2}*{Full Information}
      & \multirow{2}*{stoch.}
        & {\color{blue}Corollary~\ref{cor:d-reg}}-c
          & $1 - e^{-\gamma}$ & dynamic & $T^{1/2}(1 + P_T)^{1/2}$ \\
& & & & & {\color{blue}Corollary~\ref{cor:w-ada-reg}}-c
          & $1 - e^{-\gamma}$ & adaptive & $T^{1/2}$ \\
  \cline{4-9}
& & & \multirow{1}*{Semi-bandit}
      & \multirow{1}*{stoch.}
        & {\color{blue}Corollary~\ref{cor:w-ada-reg}}-c
          & $1 - e^{-\gamma}$ & adaptive & $T^{2/3}$ \\
  \cline{3-9}
& & \multirow{5}*{$F$}
    & \multirow{4}*{Full Information}
      & \multirow{2}*{det.}
        & {\color{blue}Corollary~\ref{cor:d-reg}}-c
          & $1 - e^{-\gamma}$ & dynamic & $T^{1/2}(1 + P_T)^{1/2}$ \\
& & & & & {\color{blue}Corollary~\ref{cor:w-ada-reg}}-c
            & $1 - e^{-\gamma}$ & adaptive & $T^{1/2}$ \\
      \cline{5-9}
& & & & \multirow{2}*{stoch.}
        & {\color{blue}Corollary~\ref{cor:d-reg}}-c
          & $1 - e^{-\gamma}$ & dynamic & $T^{3/4}(1 + P_T)^{1/2}$ \\
& & & & & {\color{blue}Corollary~\ref{cor:w-ada-reg}}-c
            & $1 - e^{-\gamma}$ & adaptive & $T^{3/4}$ \\
  \cline{4-9}
& & & \multirow{1}*{Bandit}
      & \multirow{1}*{stoch.}
        & {\color{blue}Corollary~\ref{cor:w-ada-reg}}-c
          & $1 - e^{-\gamma}$ & adaptive & $T^{4/5}$ \\
  \cline{2-9}
& \multirow{6}*{\rotatebox{90}{general}} & \multirow{2}*{$\nabla F$}
    & \multirow{2}*{Semi-bandit}
      & \multirow{2}*{stoch.}
        & {\color{blue}Corollary~\ref{cor:d-reg}}-b
          & $\gamma^2/(1 + c\gamma^2)$ & dynamic & $T^{1/2}(1 + P_T)^{1/2}$ \\
& & & & & {\color{blue}Corollary~\ref{cor:w-ada-reg}}-b
          & $\gamma^2/(1 + c\gamma^2)$ & adaptive & $T^{1/2}$ \\
  \cline{3-9}
& & \multirow{4}*{$F$}
    & \multirow{2}*{Full Information}
      & \multirow{2}*{det.}
        & {\color{blue}Corollary~\ref{cor:d-reg}}-c
          & $\gamma^2/(1 + c\gamma^2)$ & dynamic & $T^{1/2}(1 + P_T)^{1/2}$ \\
& & & & & {\color{blue}Corollary~\ref{cor:w-ada-reg}}-c
            & $\gamma^2/(1 + c\gamma^2)$ & adaptive & $T^{1/2}$ \\
    \cline{4-9}
& & & \multirow{2}*{Bandit}
      & \multirow{2}*{stoch.}
        & {\color{blue}Corollary~\ref{cor:d-reg}}-b
          & $\gamma^2/(1 + c\gamma^2)$ & dynamic & $T^{3/4}(1 + P_T)^{1/2}$ \\
& & & & & {\color{blue}Corollary~\ref{cor:w-ada-reg}}-b
        & $\gamma^2/(1 + c\gamma^2)$ & adaptive & $T^{3/4}$ \\
\hline
\multirow{8}*{\rotatebox{90}{Non-Monotone}}
& \multirow{8}*{\rotatebox{90}{general}}
  & \multirow{3}*{$\nabla F$}
    & \multirow{2}*{Full Information}
      & \multirow{2}*{stoch.}
        & {\color{blue}Corollary~\ref{cor:d-reg}}-d
          & $(1-h)/4$ & dynamic & $T^{1/2}(1 + P_T)^{1/2}$ \\
& & & & & {\color{blue}Corollary~\ref{cor:w-ada-reg}}-d
          & $(1-h)/4$ & adaptive & $T^{1/2}$ \\
  \cline{4-9}
& & & \multirow{1}*{Semi-bandit}
      & \multirow{1}*{stoch.}
        & {\color{blue}Corollary~\ref{cor:w-ada-reg}}-d
          & $(1-h)/4$ & adaptive & $T^{2/3}$ \\
  \cline{3-9}
& & \multirow{5}*{$F$}
    & \multirow{4}*{Full Information}
      & \multirow{2}*{det.}
        & {\color{blue}Corollary~\ref{cor:d-reg}}-d
          & $(1-h)/4$ & dynamic & $T^{1/2}(1 + P_T)^{1/2}$ \\
& & & & & {\color{blue}Corollary~\ref{cor:w-ada-reg}}-d
          & $(1-h)/4$ & adaptive & $T^{1/2}$ \\
    \cline{5-9}
& & & & \multirow{2}*{stoch.}
        & {\color{blue}Corollary~\ref{cor:d-reg}}-d
          & $(1-h)/4$ & dynamic & $T^{3/4}(1 + P_T)^{1/2}$ \\
& & & & & {\color{blue}Corollary~\ref{cor:w-ada-reg}}-d
          & $(1-h)/4$ & adaptive & $T^{3/4}$ \\
  \cline{4-9}
& & & \multirow{1}*{Bandit}
      & \multirow{1}*{stoch.}
        & {\color{blue}Corollary~\ref{cor:w-ada-reg}}-d
          & $(1-h)/4$ & adaptive & $T^{4/5}$ \\
\hline
\end{tabular}
}}
{~\\
\small This table includes different results for non-stationary up-concave maximization, while no prior results exist in this setup to the best of our knowledge.
The results for adaptive regret are projection-free and use a separation oracle while results for dynamic regret use convex projection.
Note that full-information algorithms with deterministic feedback require 2 queries per function while the ones with stochastic feedback only require one, at the cost of higher regret.
}
\vspace{-.2in}
\end{table}

\section{Formulation of Upper-Quadratizable Functions and Regret Relation to that of Quadratic Functions}

Let $\C{K} \subseteq \B{R}^d$ be a convex set, $\BF{F}$ be a function class over $\C{K}$.
We say the function class $\BF{F}$ is \textit{upper-quadratizable} if there are maps $\F{g} : \BF{F} \times \C{K} \to \B{R}^d$ and $h : \C{K} \to \C{K}$ and constants $\mu \geq 0$, $0 < \alpha \leq 1$ and $\beta > 0$ such that
\footnote{
Note that, without any loss in generality, we may replace $(\beta, \mu, \F{g})$ with $(1, \beta\mu, \beta\F{g})$ and therefore assume $\beta = 1$.
However, we keep $\beta$ as a separate variable as it makes some expressions in future sections simpler.
}
\begin{align}\label{eq:quadratizable}
\alpha f(\y) - f(h(\x))
\leq \beta \left( \bra \F{g}(f, \x), \y - \x \ket - \frac{\mu}{2} \| \y - \x \|^2 \right),
\end{align}
As a special case, when $\mu = 0$, we say $\BF{F}$ is \textit{upper-linearizable}.
We use the notation $\BF{F}_{\mu, \BF{g}}$ to denote the class of functions $q(\y) := \bra \F{g}(f, \x), \y - \x \ket - \frac{\mu}{2} \| \y - \x \|^2 : \C{K} \to \B{R},$ for all $f \in \BF{F}$ and $\x \in \C{K}$.
Similarly, for any $B_1 > 0$, we use the notation $\BF{Q}_\mu[B_1]$ to denote the class of functions $
q(\y) := \bra \oo, \y - \x \ket - \frac{\mu}{2} \| \y - \x \|^2 : \C{K} \to \B{R},$
 for all $\x \in \C{K}$ and $\oo \in \B{B}_{B_1}(\BF{0})$.
A similar notion of \textit{lower-quadratizable/linearizable} may be similarly defined for minimization problems such as convex minimization.
\footnote{
Specifically, we say $\BF{F}$ is lower-quadratizable if
$\alpha f(\y) - f(h(\x))
\geq \beta \left( \bra \F{g}(f, \x), \y - \x \ket + \frac{\mu}{2} \| \y - \x \|^2 \right)$.
Note that this is equivalent to $-\BF{F}$ being upper-quadratizable with the same $\alpha$,$\beta$, and $\mu$, but with $-\F{g}(-f, \x)$.
}

\begin{algorithm2e}[H]
    \SetKwInOut{Input}{Input}\DontPrintSemicolon
    \caption{Online Maximization By Quadratization  - $\mathtt{OMBQ}(\C{A}, \C{G}, h)$}\label{alg:max-by-quad}
    \small
    \Input{ horizon $T$, semi-bandit algorithm $\C{A}$, query algorithm $\C{G}$ for $\F{g}$, the map $h : \C{K} \to \C{K}$ }
    \For{$t = 1, 2, \dots, T$}{
        Play $h(\x_t)$ where $\x_t$ is the action chosen by $\C{A}$ \;
        The adversary selects $f_t$ and a first order query oracle for $f_t$ \;
        Run $\C{G}$ with access to $\x_t$ and the query oracle for $f_t$ to calculate $\oo_t$ \;
        Return $\oo_t$ as the output of the query oracle to $\C{A}$ \;
    }
\end{algorithm2e}
We say an algorithm $\C{G}$ is a first order query algorithm for $\F{g}$ if, given a point $\x \in \C{K}$ and a first order query oracle for $f$, it returns (a possibly unbiased estimate of) $\F{g}(f, \x)$.
We say $\C{G}$ is bounded by $B_1$ if the output of $\C{G}$ is always within the ball $\B{B}^d_{B_1}(\BF{0})$ and we call it trivial if it simply returns the output of the query oracle at $\x$.

Recall that an online agent $\C{A}$ is composed of action function $\C{A}^\t{action}$ and query function $\C{A}^\t{query}$.
Informally, given an online algorithm $\C{A}$ with semi-bandit feedback, we may think of $\C{A}' := \mathtt{OMBQ}(\C{A}, \C{G}, h)$ as the online algorithm with $(\C{A}')^\t{action} \approx h(\C{A}^\t{action})$ and $(\C{A}')^\t{query} \approx \C{G}$.
As a special case, when $h = \op{Id}$ and $\C{G}$ is trivial, we have $\C{A}' = \C{A}$.

\begin{theorem}\label{thm:main}
Let $\C{A}$ be algorithm for online optimization with semi-bandit feedback.
Also let $\BF{F}$ be a function class over $\C{K}$ that is quadratizable with $\mu \geq 0$ and maps $\F{g} : \BF{F} \times \C{K} \to \B{R}^d$ and $h : \C{K} \to \C{K}$, let $\C{G}$ be a query algorithm for $\F{g}$ and let $\C{A}' = \mathtt{OMBQ}(\C{A}, \C{G}, h)$.
Then the following are true.
\begin{enumerate}[left=0pt, labelwidth=0pt, labelsep=0pt, itemindent=2\parindent, itemsep=0pt, parsep=0pt, topsep=0pt, partopsep=0pt]
\item If $\C{G}$ returns the exact value of $\F{g}$, then we have $
\C{R}_{\alpha, \op{Adv}_1^{\t{f}}(\BF{F})}^{\C{A}'}
\leq \beta \C{R}_{1, \op{Adv}_1^{\t{f}}(\BF{F}_{\mu, \F{g}})}^{\C{A}}$.
\item On the other hand, if $\C{G}$ returns an unbiased estimate of $\F{g}$ and the output of $\C{G}$ is bounded by $B_1$, then we have $\C{R}_{\alpha, \op{Adv}_1^\t{o}(\BF{F}, B_1)}^{\C{A}'}
\leq \beta \C{R}_{1, \op{Adv}_1^{\t{f}}(\BF{Q}_\mu[B_1])}^{\C{A}}$.
\end{enumerate}
\end{theorem}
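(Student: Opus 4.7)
The plan is to apply the upper-quadratizable inequality of Equation~\eqref{eq:quadratizable} pointwise at each timestep and then invoke the regret guarantee of $\C{A}$ on the induced sequence of quadratics. Fix a realized adversary $\C{B}$ for $\C{A}'$ and let $(\x_t, f_t, \oo_t)_{t=1}^T$ be the trajectory produced by the interaction via $\mathtt{OMBQ}$. Introduce the ``ideal'' quadratic
\[
q_t^{\star}(\y) := \langle \F{g}(f_t, \x_t), \y - \x_t \rangle - \tfrac{\mu}{2} \| \y - \x_t \|^2 \in \BF{F}_{\mu, \F{g}},
\]
which satisfies $q_t^{\star}(\x_t) = 0$. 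Plugging $(\x, \y) = (\x_t, \uu_t)$ into Equation~\eqref{eq:quadratizable} and summing yields, for every comparator $\uu \in \C{U}$,
\[
\alpha \sum_{t=1}^T f_t(\uu_t) - \sum_{t=1}^T f_t(h(\x_t)) \leq \beta \sum_{t=1}^T \bigl(q_t^{\star}(\uu_t) - q_t^{\star}(\x_t) \bigr).
\]

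For part~1, since $\C{G}$ returns $\oo_t = \F{g}(f_t, \x_t) = \nabla q_t^{\star}(\x_t)$ exactly, $\C{A}$ is effectively playing the online game against $q_t^{\star}$ with a genuine deterministic first-order semi-bandit oracle, so the realized adversary thus induced lies in $\op{Adv}_1^{\t{f}}(\BF{F}_{\mu, \F{g}})$. Taking expectations in the display, maximizing over $\uu \in \C{U}$ on the left, and using the hypothesized regret of $\C{A}$ gives the first claim after a supremum over $\C{B}$.

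For part~2 I would work instead with the ``observed'' quadratic $q_t^{\t{obs}}(\y) := \langle \oo_t, \y - \x_t\rangle - \tfrac{\mu}{2}\|\y - \x_t\|^2$, which lies in $\BF{Q}_\mu[B_1]$ because $\|\oo_t\| \leq B_1$ and for which the feedback $\oo_t = \nabla q_t^{\t{obs}}(\x_t)$ is a genuine deterministic first-order semi-bandit oracle from $\C{A}$'s viewpoint (the induced realized adversary is adaptive because $\oo_t$ depends on $\x_t$ and the noise history). The regret hypothesis then controls $\B{E}[\max_{\uu} \sum_t (q_t^{\t{obs}}(\uu_t) - q_t^{\t{obs}}(\x_t))]$ by $\C{R}_{1,\op{Adv}_1^{\t{f}}(\BF{Q}_\mu[B_1])}^{\C{A}}$. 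To return from $q_t^{\t{obs}}$ to $q_t^{\star}$, I would use obliviousness of $\C{B}$: the comparator $\uu^{\star} := \argmax_{\uu \in \C{U}} \sum_t f_t(\uu_t)$ is measurable with respect to $\{f_t\}$ alone, while $\B{E}[\oo_t \mid f_1,\dots,f_T, \x_t, \oo_1,\dots,\oo_{t-1}] = \F{g}(f_t, \x_t)$, so the tower property gives $\B{E}[q_t^{\t{obs}}(\uu^{\star}_t)] = \B{E}[q_t^{\star}(\uu^{\star}_t)]$. Chaining the pointwise bound at $\uu^{\star}$ with this identity and with $\B{E}[\sum_t q_t^{\t{obs}}(\uu^{\star}_t)] \leq \B{E}[\max_\uu \sum_t q_t^{\t{obs}}(\uu_t)]$ concludes part~2.

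The main obstacle is precisely this measurability step: the comparator in the definition of regret is a random object that a~priori could be correlated with the oracle noise, and obliviousness of $\C{B}$ is used to decouple $\uu^{\star}$ from $\{\oo_t\}$. This asymmetry explains why the LHS of part~2 is restricted to oblivious stochastic adversaries while the RHS is permitted to be adaptive and deterministic; the accounting with $\BF{Q}_\mu[B_1]$ rather than $\BF{F}_{\mu,\F{g}}$ is the other subtlety, stemming from the fact that in the stochastic case only the sample $\oo_t$ is known to be norm-bounded by $B_1$, which is exactly what defines membership of $q_t^{\t{obs}}$ in $\BF{Q}_\mu[B_1]$.
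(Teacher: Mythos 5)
Your proposal is correct and follows essentially the same route as the paper's proof: apply the upper-quadratizable inequality pointwise with the ideal gradient $\F{g}(f_t,\x_t)$, note that the induced quadratic game is one $\C{A}$ plays against a fully adaptive, deterministic-oracle adversary in $\BF{F}_{\mu,\F{g}}$ (resp.\ $\BF{Q}_\mu[B_1]$), and in the stochastic case use obliviousness plus conditional unbiasedness of $\oo_t$ to pass from the ideal to the observed quadratic at a comparator determined by $\{f_t\}$ alone. Two details to tighten, both handled in the paper: in part~1 take the maximum over $\uu$ pathwise \emph{before} the expectation (the regret is $\B{E}[\max]$, not $\max \B{E}$, and these differ under a fully adaptive adversary and a randomized algorithm), and in part~2 the assertion that the observed-quadratic game is controlled by $\C{R}_{1,\op{Adv}_1^{\t{f}}(\BF{Q}_\mu[B_1])}^{\C{A}}$ needs the explicit step of conditioning on the oracle randomness so that each noise realization yields a genuine deterministic realized adversary in the class, after which one averages over the noise.
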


As a special case, when $f$ is concave, we may choose $\alpha = \beta = 1$, $h = \op{Id}$, and $\F{g}(f, \x)$ to be a super-gradient of $f$ at $\x$.
In this case, Theorem~\ref{thm:main} reduces to the concave version of Theorems~2 and~5 in~\cite{pedramfar24_unified_framew_analy_meta_onlin_convex_optim}.

\section{Up-concave function optimization is upper-quadratizable function optimization}

In this section, we study three classes of up-concave functions and show that they are upper-quadratizable.
We further use this property to obtain meta-algorithms that convert algorithms for quadratic optimization into algorithms for up-concave maximization.

\subsection{Monotone up-concave optimization over general convex sets}\label{sec:dr_mono_general}

For differentiable DR-submodular functions, the following lemma is proven for the case $\gamma = 1$ in Lemma~2 in~\cite{fazel22_fast_first_order_method_monot} and for the case $\mu = 0$ in \cite{hassani17_gradien_method_submod_maxim} (See Inequality~7.5 in the arXiv version).
We show the result for general $\mu$-strongly $\gamma$-weakly up-concave function with curvature bounded by $c$, See Appendix~\ref{app:dr_mono_general:quad} for proof.

\begin{lemma}\label{lem:dr_mono_general:quad}
Let $f : [0, 1]^d \to \B{R}$ be a non-negative monotone $\mu$-strongly $\gamma$-weakly up-concave function with curvature bounded by $c$.
Then, for all $\x, \y \in [0, 1]^d$, we have
\begin{align*}
\frac{\gamma^2}{1 + c\gamma^2} f(\y) - f(\x)
\leq \frac{\gamma}{1 + c\gamma^2} \left( \bra \tilde{\nabla} f(\x), \y - \x \ket - \frac{\mu}{2} \| \y - \x \|^2 \right),
\end{align*}
where $\tilde{\nabla} f$ is an up-super-gradient for $f$.
\end{lemma}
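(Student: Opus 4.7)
The plan is to handle the possible incomparability of $\x$ and $\y$ in $[0,1]^d$ via the lattice operations $\x \vee \y$ (coordinate-wise max) and $\x \wedge \y$ (coordinate-wise min), then combine monotonicity, the two-sided $\mu$-strong $\gamma$-weak up-concavity inequalities, and the curvature assumption through the identity
\[
f(\y) - f(\x) = \bigl[f(\x \vee \y) - f(\x)\bigr] - \bigl[f(\x \vee \y) - f(\y)\bigr].
\]
I will write $(\y-\x)_+ := \x \vee \y - \x$ and $(\y-\x)_- := \x - \x \wedge \y$ for the non-negative positive/negative parts; these satisfy $\y - \x = (\y-\x)_+ - (\y-\x)_-$ and $\|\y - \x\|^2 = \|(\y-\x)_+\|^2 + \|(\y-\x)_-\|^2$.

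For the first bracket, I apply the up-concavity upper inequality to the ordered pair $\x \leq \x \vee \y$, which gives $f(\x \vee \y) - f(\x) \leq \gamma^{-1}\langle \tilde\nabla f(\x), (\y-\x)_+\rangle - \tfrac{\mu}{2\gamma}\|(\y-\x)_+\|^2$. For the second bracket, I invoke curvature directly at the level of function values: substituting $\x' = \x\wedge\y$, $\y' = \y$, $\z = (\y-\x)_-$ into the definition, and noting that $\y + (\x - \x\wedge\y) = \x \vee \y$, yields $f(\x\vee\y) - f(\y) \geq (1-c)\bigl(f(\x) - f(\x\wedge\y)\bigr)$. Then the up-concavity lower inequality on $\x \wedge \y \leq \x$ bounds $f(\x) - f(\x \wedge \y) \geq \gamma\langle \tilde\nabla f(\x), (\y-\x)_-\rangle + \tfrac{\gamma\mu}{2}\|(\y-\x)_-\|^2$. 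Using curvature in its function-value form (rather than through a gradient comparison) is what lets the argument go through in the merely up-super-differentiable setting.

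Combining the three bounds and decomposing $\langle \tilde\nabla f(\x), (\y-\x)_+\rangle = \langle \tilde\nabla f(\x), \y - \x\rangle + \langle \tilde\nabla f(\x), (\y-\x)_-\rangle$ extracts the intended $\gamma^{-1}\langle \tilde\nabla f(\x), \y - \x\rangle$ term and leaves a residual coefficient $\gamma^{-1} - (1-c)\gamma = (1 - \gamma^2 + c\gamma^2)/\gamma \geq 0$ on $\langle \tilde\nabla f(\x), (\y-\x)_-\rangle$. I will absorb this residual by rearranging the same up-concavity lower inequality on $\x \wedge \y \leq \x$ together with non-negativity of $f$, giving $\langle \tilde\nabla f(\x), (\y-\x)_-\rangle \leq \gamma^{-1} f(\x) - \tfrac{\mu}{2}\|(\y-\x)_-\|^2$. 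After substitution, the $f(\x)$ coefficient aggregates to $(1 + c\gamma^2)/\gamma^2$, while the three $\mu$-contributions to $\|(\y-\x)_-\|^2$ cancel via $(1-c)\gamma^2 + (1 - \gamma^2 + c\gamma^2) = 1$ down to $-\tfrac{\mu}{2\gamma}\|(\y-\x)_-\|^2$, exactly matching the $\|(\y-\x)_+\|^2$ coefficient and recomposing to $-\tfrac{\mu}{2\gamma}\|\y - \x\|^2$. Multiplying through by $\gamma^2/(1 + c\gamma^2)$ delivers the stated bound.

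The main obstacle I anticipate is the careful bookkeeping of the three $\mu$-dependent quadratic contributions — one from the up-concavity upper bound on $\x \leq \x \vee \y$, one from the curvature-lifted lower bound on $f(\x \vee \y) - f(\y)$, and one from the further rearrangement of the up-concavity lower bound on $\x \wedge \y \leq \x$ — which must combine cleanly to the coefficient $\tfrac{\mu\gamma}{2(1+c\gamma^2)}\|\y - \x\|^2$ on the right. A secondary subtlety is that curvature has to be invoked at the level of function values rather than gradients, so that no smoothness assumption beyond the existence of an up-super-gradient is required; this is what keeps the argument compatible with the hypotheses of the lemma.
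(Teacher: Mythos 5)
Your proposal is correct and is essentially the paper's own argument: both rest on the curvature inequality $f(\x \vee \y) - f(\y) \geq (1-c)\bigl(f(\x) - f(\x \wedge \y)\bigr)$, the upper up-concavity inequality along $\x \leq \x \vee \y$, the lower one along $\x \wedge \y \leq \x$, non-negativity of $f(\x \wedge \y)$, and the identity $\|\x \vee \y - \x\|^2 + \|\x - \x \wedge \y\|^2 = \|\y - \x\|^2$. The only difference is bookkeeping: the paper bounds the single combination $(f(\x\vee\y)-f(\x)) + \tfrac{1}{\gamma^2}(f(\x\wedge\y)-f(\x))$ directly, whereas you split that $\tfrac{1}{\gamma^2}$ weight on the lower inequality into $(1-c)$ plus $\tfrac{1-\gamma^2+c\gamma^2}{\gamma^2}$, which is algebraically equivalent.
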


Further, we show that any semi-bandit feedback online linear optimization algorithm for fully adaptive adversary is also an online up-concave optimization algorithm.

\begin{theorem}\label{thm:dr_mono_general:main}
Let $\C{K} \subseteq [0, 1]^d$ be a convex set, let $\mu \geq 0$, $\gamma \in (0, 1]$, $c \in [0, 1]$ and let $\C{A}$ be algorithm for online optimization with semi-bandit feedback.
Also let $\BF{F}$ be an $M_1$-Lipschitz function class over $\C{K}$ where every $f \in \BF{F}$ is may be extended to a monotone $\mu$-strongly $\gamma$-weakly up-concave function curvature bounded by $c$ defined over $[0, 1]^d$.
Then, for any $B_1 \geq M_1$, we have
\begin{align*}
\C{R}_{\frac{\gamma^2}{1 + c\gamma^2}, \op{Adv}_1^{\t{f}}(\BF{F})}^{\C{A}}
\leq \frac{\gamma}{1 + c\gamma^2} \C{R}_{1, \op{Adv}_1^{\t{f}}(\BF{Q}_\mu[M_1])}^{\C{A}}
,\quad
\C{R}_{\frac{\gamma^2}{1 + c\gamma^2}, \op{Adv}_1^\t{o}(\BF{F}, B_1)}^{\C{A}}
\leq \frac{\gamma}{1 + c\gamma^2} \C{R}_{1, \op{Adv}_1^{\t{f}}(\BF{Q}_\mu[B_1])}^{\C{A}}
\end{align*}
\end{theorem}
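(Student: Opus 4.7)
The plan is to derive Theorem~\ref{thm:dr_mono_general:main} as essentially a two-line corollary of Lemma~\ref{lem:dr_mono_general:quad} combined with Theorem~\ref{thm:main}. First I would observe that Lemma~\ref{lem:dr_mono_general:quad} is precisely the upper-quadratizability inequality~\eqref{eq:quadratizable} with the choices $\alpha = \gamma^2/(1+c\gamma^2)$, $\beta = \gamma/(1+c\gamma^2)$, strong-concavity parameter $\mu$, $h = \op{Id}$, and $\F{g}(f, \x) := \tilde{\nabla} f(\x)$, where $\tilde{\nabla} f$ denotes an up-super-gradient of the monotone extension of $f$ to $[0,1]^d$. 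Since by hypothesis every $f \in \BF{F}$ admits such an extension, the function class $\BF{F}$ is upper-quadratizable with these parameters.

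Next I would take $\C{G}$ to be the trivial first-order query algorithm, i.e., the one that simply queries the first-order oracle at the input point $\x$ and returns the result. In the deterministic setting this returns $\tilde{\nabla} f(\x)$ exactly; in the stochastic setting it returns, by definition of a stochastic first-order oracle, an unbiased estimate of $\tilde{\nabla} f(\x)$. Because $h = \op{Id}$ and $\C{G}$ is trivial, the meta-algorithm $\C{A}' = \mathtt{OMBQ}(\C{A}, \C{G}, \op{Id})$ coincides with $\C{A}$ itself, so the conclusion of Theorem~\ref{thm:main} applied to this setup reads as a statement purely about $\C{A}$. Invoking Theorem~\ref{thm:main}(1) immediately yields the first inequality of the theorem, while Theorem~\ref{thm:main}(2) yields the second.

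The only hypothesis still to check is the boundedness of $\C{G}$ for the stochastic part. In the deterministic case $M_1$-Lipschitzness of $f$ implies $\|\tilde{\nabla} f(\x)\| \leq M_1 \leq B_1$, so the output is automatically bounded. In the stochastic case, membership in $\op{Adv}_1^\t{o}(\BF{F}, B_1)$ already imposes that the stochastic first-order oracle is bounded by $B_1$, which is exactly the hypothesis needed by Theorem~\ref{thm:main}(2).

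The main obstacle is really absent at this stage: all of the substantive mathematical content lives in Lemma~\ref{lem:dr_mono_general:quad} (the quadratization inequality, which combines up-concavity with the curvature bound) and in Theorem~\ref{thm:main} (the meta-algorithm correctness); the theorem at hand is the packaging step that matches the parameters of the upper-quadratizable framework to the geometric data of monotone $\mu$-strongly $\gamma$-weakly up-concave functions over general convex subsets of $[0,1]^d$. The only care required is to ensure the adversary classes match, which they do: Theorem~\ref{thm:main}(1) covers the fully adaptive deterministic-oracle adversary $\op{Adv}_1^{\t{f}}(\BF{F})$, and Theorem~\ref{thm:main}(2) covers the oblivious stochastic-oracle adversary $\op{Adv}_1^\t{o}(\BF{F}, B_1)$, matching exactly the two classes appearing in the statement.
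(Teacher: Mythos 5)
Your proposal is correct and is essentially the paper's own proof: the paper derives this theorem "immediately" from Lemma~\ref{lem:dr_mono_general:quad} (read as the quadratizability inequality~\eqref{eq:quadratizable} with $\alpha = \gamma^2/(1+c\gamma^2)$, $\beta = \gamma/(1+c\gamma^2)$, $h = \op{Id}$, $\F{g}(f,\x) = \tilde{\nabla} f(\x)$) together with Theorem~\ref{thm:main}, using exactly your observation that with $h = \op{Id}$ and trivial $\C{G}$ one has $\mathtt{OMBQ}(\C{A}, \C{G}, \op{Id}) = \C{A}$, and that Lipschitzness/oracle boundedness places the induced linear-quadratic adversary inside $\op{Adv}_1^{\t{f}}(\BF{Q}_\mu[M_1])$ and $\op{Adv}_1^{\t{f}}(\BF{Q}_\mu[B_1])$ respectively. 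The only cosmetic omission is that you do not restate the non-negativity of the extension to $[0,1]^d$, which the paper notes is needed for Lemma~\ref{lem:dr_mono_general:quad} to apply.
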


\begin{algorithm2e}[H]
    \SetKwInOut{Input}{Input}\DontPrintSemicolon
    \caption{Boosted Query oracle for Monotone up-concave functions over convex sets containing the origin -- $\mathtt{BQM0}$}\label{alg:dr_mono_zero:main}
    \small
    \Input{ First order query oracle, point $\x$ }
    Sample $z \in [0, 1]$ according to Equation~\eqref{eq:dr_mono_zero:law_z} \;
    Return the output of the query oracle at $z * \x$ \;
\end{algorithm2e}

These results follows immediately from Theorem~\ref{thm:main} and Lemma~\ref{lem:dr_mono_general:quad}.
Note that it is important to assume that every function in $\BF{F}$ may be extended to a non-negative up-concave function over $[0, 1]^d$ for Lemma~\ref{lem:dr_mono_general:quad} to be applied.

\begin{corollary}
The results of~\cite{hassani17_gradien_method_submod_maxim},~\cite{chen18_onlin_contin_submod_maxim} and~\cite{fazel22_fast_first_order_method_monot} on monotone continuous DR-submodular maximization over general convex sets may be thought of as special cases of Theorem~\ref{thm:dr_mono_general:main} when $\C{A}$ is the online gradient ascent algorithm.
\end{corollary}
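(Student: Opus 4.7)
The plan is to verify that when one instantiates the base algorithm $\C{A}$ in Theorem~\ref{thm:dr_mono_general:main} as online gradient ascent (OGA), the resulting meta-algorithm reduces essentially to the algorithms analyzed in the three cited papers, and that the transferred guarantees coincide (up to constants) with the ones they prove. This is a matter of careful bookkeeping rather than new analysis, since all the heavy lifting has already been done in Lemma~\ref{lem:dr_mono_general:quad} and Theorem~\ref{thm:dr_mono_general:main}. The key identifications to make are: the query algorithm $\C{G}$ is trivial (it just returns the stochastic first-order oracle at the current point), the map $h$ is the identity, and the base algorithm $\C{A}$ is projected (stochastic) gradient ascent with a standard $\eta_t = \Theta(1/\sqrt{t})$ step-size schedule.

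First I would recall that for the class $\BF{Q}_\mu[B_1]$ with $\mu = 0$ and gradient bound $B_1$, OGA in the stochastic semi-bandit/first-order setting attains $\C{R}_{1,\op{Adv}_1^{\t{f}}(\BF{Q}_0[B_1])}^{\t{OGA}} = O(D B_1 \sqrt{T})$, where $D = \op{diam}(\C{K})$. The three cited works implicitly take worst-case curvature $c = 1$ and do not assume strong concavity, so one plugs $\mu = 0$, $c = 1$ into the second inequality of Theorem~\ref{thm:dr_mono_general:main} to obtain
\[
\C{R}_{\gamma^2/(1+\gamma^2), \op{Adv}_1^\t{o}(\BF{F}, B_1)}^{\mathtt{OMBQ}(\t{OGA}, \C{G}, \op{Id})}
\leq \frac{\gamma}{1+\gamma^2}\, O(D B_1 \sqrt{T})
= O(D B_1 \sqrt{T}),
\]
which is precisely the online regret bound of~\cite{chen18_onlin_contin_submod_maxim}. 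For the offline guarantees of~\cite{hassani17_gradien_method_submod_maxim} and~\cite{fazel22_fast_first_order_method_monot}, I would invoke the standard online-to-offline reduction (averaging the iterates and applying Jensen's inequality along the path induced by the approximation factor); dividing the above bound by $T$ and choosing $T = \Theta(1/\epsilon^2)$ produces an $\epsilon$-optimal $\gamma^2/(1+\gamma^2)$-approximate solution with $O(1/\epsilon^2)$ stochastic gradient queries, matching both references.

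The main obstacle is entirely one of notational reconciliation: one must verify that each cited paper's iterative procedure, once stripped of its specific presentation (continuous-time Frank-Wolfe-style arguments in parts of~\cite{hassani17_gradien_method_submod_maxim}, the smoothed-gradient variant in~\cite{fazel22_fast_first_order_method_monot}, and the blocking/permutation framework in~\cite{chen18_onlin_contin_submod_maxim}), coincides with the projected (stochastic) gradient iteration produced by $\mathtt{OMBQ}(\t{OGA}, \C{G}, \op{Id})$ on the quadratized surrogate. Specifically, one must check that the "gradient" each paper feeds to its update step agrees with our $\F{g}(f,\x) = \tilde{\nabla} f(\x)$ up to unbiased noise bounded by $B_1$, and that their step-size schedules fall within the ones for which OGA achieves the $O(\sqrt{T})$ regret quoted above. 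Once these identifications are made, the claimed recovery of their $\gamma^2/(1+\gamma^2)$-approximation guarantees is immediate from Theorem~\ref{thm:dr_mono_general:main}; moreover, in the regime $c < 1$, Theorem~\ref{thm:dr_mono_general:main} strictly strengthens them to $\gamma^2/(1+c\gamma^2)$, recovering the improvement stated in Lemma~\ref{lem:dr_mono_general:quad}.
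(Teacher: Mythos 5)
The paper supplies no proof for this corollary: it is stated as an observation immediately after Theorem~\ref{thm:dr_mono_general:main}, and your sketch is exactly the elaboration one would write down, namely that with $h = \op{Id}$ and a trivial query algorithm $\C{G}$ one has $\C{A}' = \C{A}$, so Theorem~\ref{thm:dr_mono_general:main} applied to online gradient ascent transfers OGA's $O(DB_1\sqrt{T})$ regret on $\BF{Q}_0[B_1]$ to a $\gamma^2/(1+\gamma^2)$-regret bound (taking $c=1$), recovering the online bound of Chen et al.\ and, via online-to-batch, the $O(1/\epsilon^2)$ offline complexity of Hassani et al. That part is correct and is the intended reading.

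One identification in your proposal is off, however: you state that all three cited works take $\mu = 0$. For Fazel et al.\ this is backwards --- their contribution is precisely the \emph{strongly} DR-submodular case, which is why the paper cites their Lemma~2 as the $\gamma = 1$, general-$\mu$ precursor of Lemma~\ref{lem:dr_mono_general:quad}. To recover their guarantees as a special case of Theorem~\ref{thm:dr_mono_general:main} you must instantiate the quadratic class as $\BF{Q}_\mu[B_1]$ with $\mu > 0$ and use OGA's $O(\log T)$ (respectively linear-convergence) regret on strongly concave quadratics, not the $O(\sqrt{T})$ bound; setting $\mu = 0$ only reproduces the slower Hassani-type rate and misses the point of that reference. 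This is a small but genuine mismatch in the bookkeeping; the rest of the argument, including the observation that $c < 1$ yields a strict improvement to $\gamma^2/(1+c\gamma^2)$, is consistent with the paper.
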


\subsection{Monotone up-concave optimization over convex sets containing the origin}
\label{sec:dr_mono_zero}

The following lemma is proven for differentiable DR-submodular functions in Theorem~2 and Proposition~1 of~\cite{zhang22_stoch_contin_submod_maxim}.
The proof works for general up-concave functions as well.
We include a proof in Appendix~\ref{app:dr_mono_zero:quad} for completeness.

\begin{lemma}\label{lem:dr_mono_zero:quad}
Let $f : [0, 1]^d \to \B{R}$ be a non-negative monotone $\gamma$-weakly up-concave differentiable function and let $F : [0, 1]^d \to \B{R}$ be the function defined by
\begin{align*}
F(\x) := \int_0^1 \frac{\gamma e^{\gamma (z - 1)}}{(1 - e^{-\gamma})z} (f(z * \x) - f(\BF{0})) dz.
\end{align*}
Then $F$ is differentiable and, if the random variable $\C{Z} \in [0, 1]$ is defined by the law
\begin{align}\label{eq:dr_mono_zero:law_z}
\forall z \in [0, 1]
,\quad
\B{P}(\C{Z} \leq z) = \int_0^z \frac{\gamma e^{\gamma (u - 1)}}{1 - e^{-\gamma}} du,
\end{align}
then we have $\B{E}\left[ \nabla f(\C{Z} * \x) \right] = \nabla F(\x)$.
Moreover, we have
$(1 - e^{-\gamma}) f(\y) - f(\x)
\leq
\frac{1 - e^{-\gamma}}{\gamma} \bra \nabla F(\x), \y - \x \ket.
$
\end{lemma}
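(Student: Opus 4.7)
The plan is to verify the three parts in order. For the gradient formula, I would differentiate under the integral sign; the $1/z$ factor is cancelled by the $z$ coming from the chain rule $\nabla_\x f(z*\x) = z \nabla f(z*\x)$, yielding
\[
\nabla F(\x) = \int_0^1 \frac{\gamma e^{\gamma(z-1)}}{1 - e^{-\gamma}} \nabla f(z * \x)\, dz.
\]
The interchange is justified by dominated convergence, since $\nabla f$ is continuous (hence bounded) on the compact cube $[0,1]^d$. For the estimator claim, the factor $\frac{\gamma e^{\gamma(u-1)}}{1-e^{-\gamma}}$ integrates to $1$ on $[0,1]$, so it is a valid probability density, and the identity $\B{E}[\nabla f(\C{Z}*\x)] = \nabla F(\x)$ follows immediately from the displayed formula.

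For the inequality, after substituting the gradient formula the goal reduces to
\[
\int_0^1 e^{\gamma(z-1)} \bra \nabla f(z*\x), \y - \x \ket\, dz \;\geq\; (1 - e^{-\gamma}) f(\y) - f(\x).
\]
The core ingredient is the pointwise bound, for every $z \in [0,1]$,
\[
\bra \nabla f(z*\x), \y \ket \;\geq\; \gamma \bigl( f(\y) - f(z*\x) \bigr).
\]
I would prove this by applying the right-hand side of the $\gamma$-weak up-concavity inequality to the ordered pair $z*\x \leq (z*\x) \vee \y$, obtaining $f((z*\x) \vee \y) - f(z*\x) \leq \tfrac{1}{\gamma} \bra \nabla f(z*\x), (\y - z*\x)_+ \ket$, then using monotonicity both to lower-bound $f((z*\x) \vee \y)$ by $f(\y)$ and to ensure $\nabla f \geq 0$, which combined with $\y \geq (\y - z*\x)_+$ componentwise lets me upgrade $(\y - z*\x)_+$ to $\y$ on the right.

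For the assembly, integrating the pointwise bound against $e^{\gamma(z-1)}$ and using $\gamma \int_0^1 e^{\gamma(z-1)}\, dz = 1 - e^{-\gamma}$ produces a lower bound on $\int e^{\gamma(z-1)} \bra \nabla f(z*\x), \y \ket dz$ in terms of $(1-e^{-\gamma}) f(\y)$ and $\gamma \int e^{\gamma(z-1)} f(z*\x)\, dz$. Subtracting $\int e^{\gamma(z-1)} \bra \nabla f(z*\x), \x \ket dz$ from both sides and invoking the fundamental-theorem identity
\[
f(\x) - e^{-\gamma} f(\BF{0}) = \gamma \int_0^1 e^{\gamma(z-1)} f(z*\x)\, dz + \int_0^1 e^{\gamma(z-1)} \bra \nabla f(z*\x), \x \ket\, dz,
\]
obtained by integrating $\tfrac{d}{dz}[e^{\gamma(z-1)} f(z*\x)]$ over $[0,1]$, together with $f(\BF{0}) \geq 0$, gives the target. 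The main obstacle I anticipate is establishing the pointwise bound using only $\gamma$-weak up-concavity and monotonicity (rather than the stronger $\gamma$-weak DR-submodularity $\nabla f(\BF{a}) \geq \gamma \nabla f(\BF{b})$ for $\BF{a} \leq \BF{b}$); the introduction of the comparable pair $(z*\x) \vee \y$ and the reliance on $f$ being defined and monotone on the full cube $[0,1]^d$ (not just on a domain $\C{K}$) are the essential non-routine moves, while the rest is standard integration by parts and bookkeeping.
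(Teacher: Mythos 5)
Your proposal is correct and follows essentially the same route as the paper's proof: differentiation under the integral with the $z$-cancellation, the join $(z*\x)\vee\y$ combined with monotonicity (so $\nabla f \geq 0$ and $f((z*\x)\vee\y)\geq f(\y)$) and $\gamma$-weak up-concavity to obtain the pointwise bound, and the fundamental-theorem identity for $\bra \nabla F(\x), \x \ket$ (which the paper phrases as an integration by parts), followed by dropping the nonnegative $f(\BF{0})$ term. The only cosmetic difference is that you justify the integrand bound via continuity of $\nabla f$ while the paper invokes the Lipschitz constant $M_1$; both amount to the same interchange argument.
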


\begin{theorem}\label{thm:dr_mono_zero:main}
Let $\C{K} \subseteq [0, 1]^d$ be a convex set containing the origin, let $\gamma \in (0, 1]$ and let $\C{A}$ be algorithm for online optimization with semi-bandit feedback.
Also let $\BF{F}$ be a function class over $\C{K}$ where every $f \in \BF{F}$ is the restriction of a monotone $\gamma$-weakly up-concave function defined over $[0, 1]^d$ to the set $\C{K}$.
Assume $\BF{F}$ is differentiable and $M_1$-Lipschitz for some $M_1 > 0$.
Then, for any $B_1 \geq M_1$, we have
\begin{align*}
\C{R}_{1 - e^{-\gamma}, \op{Adv}_1^\t{o}(\BF{F}, B_1)}^{\C{A}'}
\leq \frac{1 - e^{-\gamma}}{\gamma} \C{R}_{1, \op{Adv}_1^{\t{f}}(\BF{Q}_\mu[B_1])}^{\C{A}}
\vspace{-.1in}
\end{align*}
where $\C{A}' = \mathtt{OMBQ}(\C{A}, \mathtt{BQM0}, \op{Id})$.
\end{theorem}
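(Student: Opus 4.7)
The plan is to recognize this statement as a direct instantiation of the upper-linearizable reduction captured by Theorem~\ref{thm:main}(2), with the role of the first-order query algorithm $\C{G}$ played by $\mathtt{BQM0}$. First, I would invoke Lemma~\ref{lem:dr_mono_zero:quad} on each $f \in \BF{F}$ (extended to the non-negative monotone $\gamma$-weakly up-concave function on $[0,1]^d$) to obtain the auxiliary map $F$ and the inequality $(1-e^{-\gamma}) f(\y) - f(\x) \le \tfrac{1-e^{-\gamma}}{\gamma}\,\langle \nabla F(\x), \y - \x\rangle$ for all $\x, \y$ in $\C{K}$. This is exactly the upper-linearizable condition \eqref{eq:quadratizable} with the identification $\alpha = 1-e^{-\gamma}$, $\beta = (1-e^{-\gamma})/\gamma$, $\mu = 0$, $h = \op{Id}$, and $\F{g}(f, \x) := \nabla F(\x)$.

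Second, I would verify that $\mathtt{BQM0}$ is a bounded first-order query algorithm for this $\F{g}$ that returns an unbiased estimate. Since $\BF{0} \in \C{K}$ and $\C{K}$ is convex, the point $z * \x = z\x$ lies in $\C{K}$ for all $z \in [0,1]$ and $\x \in \C{K}$, so the query $\nabla f(z * \x)$ is well-defined. By the identity in Lemma~\ref{lem:dr_mono_zero:quad}, drawing $\C{Z}$ from the law \eqref{eq:dr_mono_zero:law_z} gives $\mathbb{E}[\nabla f(\C{Z} * \x)] = \nabla F(\x) = \F{g}(f, \x)$, which is the required unbiasedness. The boundedness follows from the $M_1$-Lipschitz assumption on $\BF{F}$, since $\|\nabla f(z*\x)\| \le M_1 \le B_1$ almost surely.

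Third, with $h = \op{Id}$, the meta-algorithm $\C{A}' = \mathtt{OMBQ}(\C{A}, \mathtt{BQM0}, \op{Id})$ simply plays the action $\x_t$ chosen by $\C{A}$ and feeds $\mathtt{BQM0}$'s output back as the semi-bandit response, so all hypotheses of Theorem~\ref{thm:main}(2) are satisfied. Applying that theorem to the oblivious stochastic-first-order adversary class $\op{Adv}_1^{\t o}(\BF{F}, B_1)$ yields
\begin{equation*}
\C{R}_{1-e^{-\gamma},\, \op{Adv}_1^{\t o}(\BF{F}, B_1)}^{\C{A}'}
\;\le\; \frac{1-e^{-\gamma}}{\gamma}\, \C{R}_{1,\, \op{Adv}_1^{\t f}(\BF{Q}_0[B_1])}^{\C{A}},
\end{equation*}
which is exactly the claim (with the $\mu$ appearing in the statement understood as $0$, since Lemma~\ref{lem:dr_mono_zero:quad} only yields upper-linearizability).

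The main point of care, and essentially the only non-mechanical step, is establishing the unbiasedness identity in the right conditional sense so that Theorem~\ref{thm:main}(2) genuinely applies: the sampling of $\C{Z}$ inside $\mathtt{BQM0}$ must be independent of $(\x_t, f_t)$ and of the stochastic first-order oracle's own noise, so that $\mathbb{E}[\oo_t \mid \x_t, f_t] = \nabla F(\x_t)$ by Fubini together with Lemma~\ref{lem:dr_mono_zero:quad}. Under an oblivious adversary this is immediate, and once it is in place the reduction to a linear online problem over $\BF{Q}_0[B_1]$ is just bookkeeping.
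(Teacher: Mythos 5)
Your proposal is correct and follows essentially the same route as the paper, which proves this theorem precisely by combining Lemma~\ref{lem:dr_mono_zero:quad} (giving upper-linearizability with $\alpha = 1-e^{-\gamma}$, $\beta = (1-e^{-\gamma})/\gamma$, $\mu = 0$, $h = \op{Id}$, $\F{g}(f,\x) = \nabla F(\x)$) with Theorem~\ref{thm:main}, part 2, using $\mathtt{BQM0}$ as the unbiased query algorithm. The only small inaccuracy is your justification of boundedness: in the stochastic case $\mathtt{BQM0}$ returns the noisy oracle output at $z * \x$, which is bounded by $B_1$ by the definition of $\op{Adv}_1^\t{o}(\BF{F}, B_1)$ rather than by the pointwise bound $\|\nabla f(z*\x)\| \leq M_1$, though this does not affect the conclusion.
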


This result now follows immediately from Theorem~\ref{thm:main} and Lemma~\ref{lem:dr_mono_zero:quad}.

\begin{corollary}\label{cor:dr_mono_zero:boosting}
The result of~\cite{zhang22_stoch_contin_submod_maxim} in the online setting (when there is no delay) may be seen as an application of Theorem~\ref{thm:dr_mono_zero:main} when $\C{A}$ is chosen to be online gradient ascent.
\end{corollary}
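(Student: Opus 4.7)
The plan is to unpack what $\mathtt{OMBQ}(\C{A}, \mathtt{BQM0}, \op{Id})$ actually computes when $\C{A}$ is online gradient ascent (OGA) on $\C{K}$, check that the resulting online procedure is exactly the boosted-gradient algorithm of~\cite{zhang22_stoch_contin_submod_maxim} in the no-delay case, and then verify that substituting the standard OGA regret bound into Theorem~\ref{thm:dr_mono_zero:main} reproduces their $(1-e^{-\gamma})$-regret guarantee. So the argument is essentially a bookkeeping identification followed by a one-line plug-in.

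First I will unroll the meta-algorithm. Since $h = \op{Id}$, at each round $\C{A}'$ plays the point $\x_t$ maintained by OGA. Given a stochastic first-order oracle for $f_t$, the subroutine $\mathtt{BQM0}$ draws $\C{Z}_t \in [0,1]$ from the density $z \mapsto \gamma e^{\gamma(z-1)} / (1 - e^{-\gamma})$ and returns $\g_t := \widehat{\nabla} f_t(\C{Z}_t * \x_t)$, which by Lemma~\ref{lem:dr_mono_zero:quad} is an unbiased estimator of $\nabla F_t(\x_t)$. OGA then performs the projected update $\x_{t+1} = \Pi_\C{K}(\x_t + \eta_t \g_t)$. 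This is precisely the boosting construction of~\cite{zhang22_stoch_contin_submod_maxim}: their algorithm maintains an iterate, samples a scalar from the same exponential kernel on $[0,1]$, queries the first-order oracle at the rescaled point, and feeds the resulting boosted gradient into projected gradient ascent. In the no-delay online setting the two procedures are identical round by round.

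Second, I will invoke the standard OGA guarantee against linear losses: with step size $\eta = D/(B_1 \sqrt{T})$ and stochastic gradients of norm at most $B_1$, OGA satisfies $\C{R}_{1, \op{Adv}_1^{\t{f}}(\BF{Q}_0[B_1])}^{\C{A}} = O(D B_1 \sqrt{T})$, where $D$ is the diameter of $\C{K}$. Plugging this bound into Theorem~\ref{thm:dr_mono_zero:main} yields
\[
\C{R}_{1-e^{-\gamma}, \op{Adv}_1^\t{o}(\BF{F}, B_1)}^{\C{A}'}
\leq \frac{1-e^{-\gamma}}{\gamma} \cdot O(D B_1 \sqrt{T}),
\]
which is exactly the $(1-e^{-\gamma})$-approximate $O(\sqrt{T})$ regret of~\cite{zhang22_stoch_contin_submod_maxim}.

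The main obstacle is the identification in the second paragraph rather than any analytical work: one must check that (i) the sampling density in $\mathtt{BQM0}$ matches their rescaling distribution (both equal $\gamma e^{\gamma(z-1)}/(1-e^{-\gamma})$ on $[0,1]$), (ii) the action $h(\x_t) = \x_t$ they play and the query point $\C{Z}_t * \x_t$ coincide with ours, and (iii) OGA's update over the surrogate's unbiased gradient estimates is the literal update rule in their pseudocode. Once these three correspondences are confirmed, the corollary follows with no further estimates.
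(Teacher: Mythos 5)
Your proposal is correct and matches what the paper intends: the paper gives no explicit proof of this corollary, treating it as an immediate identification, and your unrolling of $\mathtt{OMBQ}(\mathtt{OGA}, \mathtt{BQM0}, \op{Id})$ — same sampling density, same query point $\C{Z}_t * \x_t$, same projected ascent update — followed by plugging the standard $O(DB_1\sqrt{T})$ linear-loss bound for OGA into Theorem~\ref{thm:dr_mono_zero:main} is exactly the verification the authors have in mind (cf.\ the footnote to Table~\ref{tbl:adv} on rows marked with an asterisk). No gaps.
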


\subsection{Non-monotone up-concave optimization over general convex sets}

The following lemma is proven for differentiable DR-submodular functions in Corollary~2, Theorem~4 and Proposition~2 of~\cite{zhang24_boost_gradien_ascen_contin_dr_maxim}.
The arguments works for general up-concave functions as well.
We include a proof in Appendix~\ref{app:dr_nonmono:quad} for completeness.

\begin{lemma}\label{lem:dr_nonmono:quad}
Let $f : [0, 1]^d \to \B{R}$ be a non-negative continuous up-concave differentiable function and let $\underline{\x} \in \C{K}$.
Define $F : [0, 1]^d \to \B{R}$ as the function
$F(\x) := \int_0^1 \frac{2}{3 z (1 - \frac{z}{2})^3} \left( f\left(\frac{z}{2} * (\x - \underline{\x}) + \underline{\x} \right) - f(\underline{\x}) \right) dz$.
Then $F$ is differentiable and, if the random variable $\C{Z} \in [0, 1]$ is defined by the law
\begin{align}\label{eq:dr_nonmono:law_z}
\forall z \in [0, 1]
,\quad
\B{P}(\C{Z} \leq z) = \int_0^z \frac{1}{3 (1 - \frac{u}{2})^3} du,
\end{align}
then we have
$\B{E}\left[ \nabla f\left(\frac{\C{Z}}{2} * (\x - \underline{\x}) + \underline{\x} \right) \right] = \nabla F(\x)$.
Moreover, we have
\begin{align*}
\frac{1 - \|\underline{\x}\|_\infty}{4} f(\y) - f\left(\frac{\x + \underline{\x}}{2} \right)
&\leq
\frac{3}{8} \bra \nabla F(\x), \y - \x \ket.
\end{align*}
\end{lemma}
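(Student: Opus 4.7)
The plan is to prove the lemma in three stages: verify that~\eqref{eq:dr_nonmono:law_z} defines a probability measure, differentiate $F$ under the integral sign, and establish the boosting inequality by a carefully chosen integration by parts. The normalization $\int_0^1 \frac{du}{3(1-u/2)^3} = 1$ follows from the substitution $v = 1 - u/2$, which reduces the integral to $\tfrac{2}{3}\int_{1/2}^1 v^{-3}\, dv = \tfrac{2}{3}\cdot\tfrac{3}{2} = 1$. For the expectation identity, writing $\z(z) := \underline{\x} + \tfrac{z}{2}(\x - \underline{\x})$ and applying Leibniz's rule (justified by continuity of $\nabla f$ on the compact box $[0,1]^d$) gives
\[\nabla F(\x) = \int_0^1 \frac{2}{3z(1-z/2)^3}\cdot\tfrac{z}{2}\,\nabla f(\z(z))\, dz = \int_0^1 \frac{\nabla f(\z(z))}{3(1-z/2)^3}\, dz,\]
and this kernel is exactly the density of $\C{Z}$ from~\eqref{eq:dr_nonmono:law_z}.

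The substantive step is the inequality. My plan is to decompose $\y - \x = (\y - \z(z)) - (1 - z/2)(\x - \underline{\x})$ and integrate each piece against the density $\frac{1}{3(1-z/2)^3}$. For the $\y - \z(z)$ piece, I would first derive the non-monotone gradient bound
\[\bra \nabla f(\z(z)), \y - \z(z) \ket \geq f(\y \vee \z(z)) + f(\y \wedge \z(z)) - 2 f(\z(z)),\]
by splitting $\y - \z(z) = (\y \vee \z(z) - \z(z)) - (\z(z) - \y \wedge \z(z))$ and applying up-concavity of $f$ along each of these non-negative directions. Then, using non-negativity of $f$ on $\y \wedge \z(z)$, the standard inequality $f(\y \vee \z(z)) \geq (1 - \|\z(z)\|_\infty) f(\y)$, and the convex-combination bound $\|\z(z)\|_\infty \leq (1-z/2)\|\underline{\x}\|_\infty + z/2$ (so that $1 - \|\z(z)\|_\infty \geq (1-z/2)(1-\|\underline{\x}\|_\infty)$), integration against the density yields a $\tfrac{2}{3}(1-\|\underline{\x}\|_\infty) f(\y)$ contribution together with a remainder $-\int_0^1 \frac{2 f(\z(z))}{3(1-z/2)^3}\, dz$. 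For the second piece, the chain-rule identity $\bra \nabla f(\z(z)), \x - \underline{\x} \ket = 2 \tfrac{d}{dz} f(\z(z))$ turns the contribution into $-\int_0^1 \frac{2}{3(1-z/2)^2}\tfrac{d}{dz} f(\z(z))\, dz$.

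The punchline is integration by parts on this last integral with $w(z) = \frac{2}{3(1-z/2)^2}$: one computes $w'(z) = \frac{2}{3(1-z/2)^3}$, which matches exactly the weight on $f(\z(z))$ in the remainder from the previous step, so the two interior $f(\z(z))$ integrals cancel. Only the boundary terms $w(1) f(\z(1)) - w(0) f(\z(0)) = \tfrac{8}{3}f\bigl(\tfrac{\x+\underline{\x}}{2}\bigr) - \tfrac{2}{3} f(\underline{\x})$ survive; dropping the non-negative $\tfrac{2}{3} f(\underline{\x})$ and multiplying through by $3/8$ delivers the claim. The main obstacle, and the point that justifies the peculiar form of the kernel in $F$, is this reverse-engineering: the density $\frac{1}{3(1-z/2)^3}$ (and hence the prefactor $\frac{2}{3z(1-z/2)^3}$ in the definition of $F$) is forced by the requirement that $w'(z)$ cancel the $f(\z(z))$ terms exactly, and once this cancellation is arranged the rest of the argument is a routine calculation.
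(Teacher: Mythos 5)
Your proof is correct and follows essentially the same route as the paper's: the same lattice splitting plus up-concavity at the sampled point, the same bound $f(\y \vee \z) \geq (1-\|\z\|_\infty) f(\y)$ combined with $1-\|\z\|_\infty \geq (1-\tfrac{z}{2})(1-\|\underline{\x}\|_\infty)$, and the same integration by parts against the weight $\tfrac{2}{3(1-z/2)^2}$; your decomposition of $\y-\x$ directly around the sampled point merely avoids the intermediate $\bra \nabla f(\cdot), \cdot \ket$ terms that the paper introduces (by bounding $\bra\nabla F(\x),\y\ket$ and $\bra\nabla F(\x),\x\ket$ separately) and later cancels. The one caveat is the step you call ``standard'': $f(\y\vee\z)\geq(1-\|\z\|_\infty)f(\y)$ is standard for DR-submodular functions, but here $f$ is only assumed up-concave, so it requires the short separate verification the paper supplies as Lemma~\ref{lem:f_join_non-monotone} (integrate the directional derivative along the segment from $\y$ to $\y+\frac{\z\vee\y-\y}{\|\z\|_\infty}$, using concavity along non-negative directions and non-negativity of $f$).
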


\begin{theorem}\label{thm:dr_nonmono:main}
Let $\C{K} \subseteq [0, 1]^d$ be a convex set, $\underline{\uu} \in \C{K}$, $h := \|\underline{\uu}\|_\infty$ and $\C{A}$ be algorithm for online optimization with semi-bandit feedback.
Also let $\BF{F}$ be a function class over $\C{K}$ where every $f \in \BF{F}$ is the restriction of an up-concave function defined over $[0, 1]^d$ to the set $\C{K}$.
Assume $\BF{F}$ is differentiable and $M_1$-Lipschitz for some $M_1 > 0$.
Then, for any $B_1 \geq M_1$ and $\C{A}' = \mathtt{OMBQ}(\C{A}, \mathtt{BQN}, \x \mapsto \frac{\x_t + \underline{\x}}{2})$, we have $\C{R}_{\frac{1 - h}{4}, \op{Adv}_1^\t{o}(\BF{F}, B_1)}^{\C{A}'}
\leq \frac{3}{8} \C{R}_{1, \op{Adv}_1^{\t{f}}(\BF{Q}_0[B_1])}^{\C{A}}$.
\end{theorem}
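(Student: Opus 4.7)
The plan is to read the conclusion of Theorem~\ref{thm:dr_nonmono:main} as an immediate consequence of combining Lemma~\ref{lem:dr_nonmono:quad} with part~2 of Theorem~\ref{thm:main}, in exact parallel with how Theorem~\ref{thm:dr_mono_zero:main} was obtained from Lemma~\ref{lem:dr_mono_zero:quad}. The first step is to read off the upper-linearization parameters from Lemma~\ref{lem:dr_nonmono:quad}: its final inequality
\[
\frac{1 - \|\underline{\uu}\|_\infty}{4} f(\y) - f\!\left(\tfrac{\x + \underline{\uu}}{2}\right)
\leq \frac{3}{8} \langle \nabla F(\x), \y - \x \rangle
\]
matches the defining inequality~\eqref{eq:quadratizable} of upper-linearizability with $\alpha = (1 - h)/4$, $\beta = 3/8$, $\mu = 0$, the retraction $h_{\mathrm{map}}(\x) := (\x + \underline{\uu})/2$, and $\F{g}(f, \x) := \nabla F(\x)$, where $F$ is the auxiliary function associated to $f$ and $\underline{\uu}$ in the lemma. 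Because each $f \in \BF{F}$ extends to a non-negative up-concave differentiable function on $[0, 1]^d$, the lemma applies uniformly across $\BF{F}$, so the class is upper-linearizable with these constants.

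The second step is to verify that $\mathtt{BQN}$ is a valid unbiased first-order query algorithm for $\F{g}$ with output bounded by $B_1$. By construction (analogous to $\mathtt{BQM0}$ in Algorithm~\ref{alg:dr_mono_zero:main}), $\mathtt{BQN}$ samples $\C{Z}$ from the law~\eqref{eq:dr_nonmono:law_z} and returns $\nabla f\bigl(\tfrac{\C{Z}}{2} \ast (\x - \underline{\uu}) + \underline{\uu}\bigr)$; the identity $\B{E}\bigl[\nabla f\bigl(\tfrac{\C{Z}}{2} \ast (\x - \underline{\uu}) + \underline{\uu}\bigr)\bigr] = \nabla F(\x)$ from Lemma~\ref{lem:dr_nonmono:quad} supplies unbiasedness, and the $M_1$-Lipschitz hypothesis gives $\|\nabla f(\cdot)\| \leq M_1 \leq B_1$ pointwise, hence the output lies in $\B{B}_{B_1}(\BF{0})$. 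Applying part~2 of Theorem~\ref{thm:main} (the branch for oblivious adversaries with an unbiased, bounded estimator) to $\C{A}' = \mathtt{OMBQ}(\C{A}, \mathtt{BQN}, h_{\mathrm{map}})$ with these parameters yields exactly the claimed bound $\C{R}_{(1-h)/4, \op{Adv}_1^\t{o}(\BF{F}, B_1)}^{\C{A}'} \leq \tfrac{3}{8}\, \C{R}_{1, \op{Adv}_1^{\t{f}}(\BF{Q}_0[B_1])}^{\C{A}}$.

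There is no substantive obstacle: all analytical work is concentrated in Lemma~\ref{lem:dr_nonmono:quad}, and the theorem is essentially a bookkeeping translation of its pointwise inequality into a regret inequality via the meta-algorithm $\mathtt{OMBQ}$. The only small care needed is to keep the conventions straight: the point actually played at round $t$ by $\C{A}'$ is $h_{\mathrm{map}}(\x_t) = (\x_t + \underline{\uu})/2$, matching the argument of $f$ appearing on the left of~\eqref{eq:quadratizable}; and the comparator class is $\BF{Q}_0[B_1]$ rather than $\BF{Q}_0[M_1]$ because the bound in part~2 of Theorem~\ref{thm:main} is stated in terms of whatever radius bounds $\C{G}$'s output, and we only assumed $B_1 \geq M_1$.
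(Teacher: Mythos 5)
Your proposal is correct and is essentially the paper's own argument: the paper derives Theorem~\ref{thm:dr_nonmono:main} directly by instantiating Theorem~\ref{thm:main} (part 2, stochastic/oblivious case) with the parameters $\alpha=(1-h)/4$, $\beta=3/8$, $\mu=0$, $h(\x)=(\x+\underline{\uu})/2$, $\F{g}(f,\x)=\nabla F(\x)$ supplied by Lemma~\ref{lem:dr_nonmono:quad}, with $\mathtt{BQN}$ as the unbiased query algorithm. One small correction to your bookkeeping: since the adversary's first-order oracle is stochastic, $\mathtt{BQN}$ returns the (possibly noisy) oracle output at the shifted point rather than $\nabla f$ itself, so its boundedness by $B_1$ comes from the oracle bound in $\op{Adv}_1^\t{o}(\BF{F},B_1)$ (not merely from $M_1$-Lipschitzness), and unbiasedness for $\nabla F(\x)$ follows by combining the oracle's unbiasedness with the expectation over $\C{Z}$ — this does not change the conclusion.
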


\begin{algorithm2e}[H]
    \SetKwInOut{Input}{Input}\DontPrintSemicolon
    \caption{Boosted Query oracle for Non-monotone up-concave functions over general convex sets -- $\mathtt{BQN}$}\label{alg:dr_nonmono:main}
    \small
    \Input{ First order query oracle, point $\x$ }
    Sample $z \in [0, 1]$ according to Equation~\ref{eq:dr_nonmono:law_z} \;
    Return the output of the query oracle at $\frac{z}{2} * (\x - \underline{\x}) + \underline{\x}$ \;
\end{algorithm2e}

These results now follows immediately from Theorem~\ref{thm:main} and Lemma~\ref{lem:dr_nonmono:quad}.

\begin{corollary}\label{cor:dr_nonmono:boosting}
The result of~\cite{zhang24_boost_gradien_ascen_contin_dr_maxim} in the online setting without delay may be seen as an application of Theorem~\ref{thm:dr_nonmono:main} when $\C{A}$ is chosen to be online gradient ascent.
\end{corollary}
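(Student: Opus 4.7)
The plan is to instantiate Theorem~\ref{thm:dr_nonmono:main} with $\C{A}$ taken to be online gradient ascent (OGA) and then verify that both the resulting meta-algorithm and its regret guarantee coincide, up to constants, with those of \cite{zhang24_boost_gradien_ascen_contin_dr_maxim} in the online no-delay setting.

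First, I would unfold the construction $\C{A}' = \mathtt{OMBQ}(\mathtt{OGA}, \mathtt{BQN}, \x \mapsto \tfrac{\x + \underline{\x}}{2})$ explicitly. By Algorithm~\ref{alg:max-by-quad}, in round $t$ the internal OGA maintains an iterate $\x_t \in \C{K}$; the agent plays $\tfrac{\x_t + \underline{\x}}{2}$; then $\mathtt{BQN}$ samples $z_t \in [0,1]$ from the law in Equation~\ref{eq:dr_nonmono:law_z}, queries the first-order oracle at $\tfrac{z_t}{2}(\x_t - \underline{\x}) + \underline{\x}$, and returns the resulting gradient $\oo_t$ to OGA, which performs the update $\x_{t+1} = \Pi_{\C{K}}(\x_t + \eta_t \oo_t)$. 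By Lemma~\ref{lem:dr_nonmono:quad}, $\B{E}[\oo_t \mid \x_t] = \nabla F(\x_t)$, so this is exactly the boosted projected gradient ascent procedure of \cite{zhang24_boost_gradien_ascen_contin_dr_maxim} specialized to non-delayed online feedback; the iterate $\x_t$ plays the role of the auxiliary variable on which the boosted surrogate $F$ is defined, and the played point $\tfrac{\x_t + \underline{\x}}{2}$ is the one against which regret is measured.

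Second, apply Theorem~\ref{thm:dr_nonmono:main} directly to obtain
\[
\C{R}_{\frac{1-h}{4},\, \op{Adv}_1^{\t{o}}(\BF{F}, B_1)}^{\C{A}'}
\;\leq\; \frac{3}{8}\, \C{R}_{1,\, \op{Adv}_1^{\t{f}}(\BF{Q}_0[B_1])}^{\mathtt{OGA}}.
\]
The right-hand side is the standard online linear optimization regret of OGA on gradients bounded by $B_1$ over a set of diameter $D$, which is $O(B_1 D \sqrt{T})$. After multiplying by $\tfrac{3}{8}$ this matches, up to absolute constants, the $\bigl(\tfrac{1-h}{4}\bigr)$-approximate $O(\sqrt{T})$ regret reported by Zhang et al., so their result is recovered as a corollary of our framework.

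The main point requiring care, and the only nontrivial step, is the identification of the stochastic oracle: one must check that the sampling distribution of $z_t$ in $\mathtt{BQN}$ together with the evaluation point $\tfrac{z_t}{2}(\x_t - \underline{\x}) + \underline{\x}$ produces exactly the unbiased estimator of $\nabla F(\x_t)$ that Zhang et al. use as their boosted gradient. This is precisely the content of the second conclusion of Lemma~\ref{lem:dr_nonmono:quad}, so once that lemma is in hand the verification is immediate and the corollary follows. No additional assumptions beyond those already invoked in Theorem~\ref{thm:dr_nonmono:main} (non-negativity, differentiability, Lipschitzness, up-concavity) are needed, which is consistent with the hypotheses in \cite{zhang24_boost_gradien_ascen_contin_dr_maxim}.
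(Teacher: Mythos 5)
Your proposal is correct and matches the paper's intended argument: the paper treats this corollary as an immediate observation, namely that $\mathtt{OMBQ}(\mathtt{OGA}, \mathtt{BQN}, \x \mapsto \frac{\x + \underline{\x}}{2})$ reproduces the boosted projected gradient ascent of Zhang et al.\ (with unbiasedness of the $\mathtt{BQN}$ estimator supplied by Lemma~\ref{lem:dr_nonmono:quad}) and that Theorem~\ref{thm:dr_nonmono:main} combined with the standard $O(B_1 D \sqrt{T})$ linear regret of OGA recovers their $\frac{1-h}{4}$-approximate $O(\sqrt{T})$ guarantee. Your write-up simply makes this identification explicit, so no gap remains.
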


\section{Meta algorithms for other feedback cases}\label{sec:meta}

In this section, we study several meta-algorithms that allow us to convert between different feedback types and also convert results from the online setting to the offline setting.

{\bf First order/semi-bandit to zeroth order/bandit feedback: }
In this section we discuss meta-algorithms that convert algorithms designed for first order feedback into algorithms that can handle zeroth order feedback.
These algorithms and results are generalization of similar results in~\cite{pedramfar24_unified_framew_analy_meta_onlin_convex_optim} to the case where $\alpha < 1$.

We choose a point $\BF{c} \in \op{relint}(\C{K})$ and a real number $r > 0$ such that $\op{aff}(\C{K}) \cap \B{B}_r(\BF{c}) \subseteq \C{K}$.
Then, for any shrinking parameter $0 \leq \delta < r$, we define
$\hat{\C{K}}_\delta := (1 - \frac{\delta}{r}) \C{K} + \frac{\delta}{r} \BF{c}$.
For a function $f : \C{K} \to \B{R}$ defined on a convex set $\C{K} \subseteq \B{R}^d$, its $\delta$-smoothed version $\hat{f}_\delta : \hat{\C{K}}_\delta \to \B{R}$ is given as
\begin{align*}
\hat{f}_\delta(\x)
&:= \B{E}_{\z \sim \op{aff}(\C{K}) \cap \B{B}_\delta(\x)}[f(\z)]
= \B{E}_{\vv \sim \C{L}_0 \cap \B{B}_1(\BF{0})}[f(\x + \delta \vv)],
\end{align*}
where $\C{L}_0 = \op{aff}(\C{K}) - \x$, for any $\x \in \C{K}$, is the linear space that is a translation of the affine hull of $\C{K}$ and $\vv$ is sampled uniformly at random from the $k = \op{dim}(\C{L}_0)$-dimensional ball $\C{L}_0 \cap \B{B}_1(\BF{0})$.
Thus, the function value $\hat{f}_\delta(\x)$ is obtained by ``averaging'' $f$ over a sliced ball of radius $\delta$ around $\x$.
For a function class $\BF{F}$ over $\C{K}$, we use $\hat{\BF{F}}_\delta$ to denote $\{ \hat{f}_\delta \mid f \in \BF{F} \}$.
We will drop the subscript $\delta$ when there is no ambiguity (See Appendix~\ref{app:first-order-to-zero-order} for the description of the algorithms and the proof.).

\begin{theorem}\label{thm:first-order-to-zero-order}
Let $\BF{F}$ be an $M_1$-Lipschitz function class over a convex set $\C{K}$ and choose $\BF{c}$ and $r$ as described above and let $\delta < r$.
Let $\C{U} \subseteq \C{K}^T$ be a compact set and let $\hat{\C{U}} = (1 - \frac{\delta}{r}) \C{U} + \frac{\delta}{r} \BF{c}$.
Assume $\C{A}$ is an algorithm for online optimization with first order feedback.
Then, if $\C{A}' = \mathtt{FOTZO}(\C{A})$ where $\mathtt{FOTZO}$ is described by Algorithm~\ref{alg:first-order-to-zeroth-order} and $0 < \alpha \leq 1$, we have
\begin{align*}
\C{R}_{\alpha, \op{Adv}^\t{o}_0(\BF{F}, B_0)}^{\C{A}'}(\C{U})
\leq \C{R}_{\alpha, \op{Adv}^\t{o}_1(\hat{\BF{F}}, \frac{k}{\delta}B_0)}^{\C{A}}(\hat{\C{U}})
+ \left( 3 + \frac{2 D}{r} \right) \delta M_1 T.
\end{align*}
On the other hand, if we assume that $\C{A}$ is semi-bandit, then the same regret bounds hold with $\C{A}' = \mathtt{STB}(\C{A})$, where $\mathtt{STB}$ is described by Algorithm~\ref{alg:semi-bandit-to-bandit}.
\end{theorem}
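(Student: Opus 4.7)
The plan is to design $\mathtt{FOTZO}(\C{A})$ so that the inner first-order algorithm $\C{A}$ operates on the shrunken domain $\hat{\C{K}}_\delta$ with the smoothed functions $\hat{f}_{t,\delta}$, while the outer algorithm $\C{A}'$ actually plays points in the full set $\C{K}$. Concretely, at round $t$, let $\x_t \in \hat{\C{K}}_\delta$ be the point chosen by $\C{A}$. I would sample $\vv_t$ uniformly from the unit sphere $\partial \B{B}_1(\BF{0}) \cap \C{L}_0$ inside the $k$-dimensional space $\C{L}_0$ parallel to $\op{aff}(\C{K})$, have $\C{A}'$ play $\y_t := \x_t + \delta \vv_t$ (which lies in $\C{K}$ because $\x_t \in \hat{\C{K}}_\delta$), observe the zeroth-order value $f_t(\y_t)$, and feed $\g_t := \tfrac{k}{\delta} f_t(\y_t) \vv_t$ back to $\C{A}$ as its stochastic first-order signal. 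The classical Stokes/divergence-theorem identity, applied inside $\op{aff}(\C{K})$, gives $\B{E}_{\vv_t}[\g_t \mid \x_t] = \nabla \hat{f}_{t,\delta}(\x_t)$, so $\g_t$ is an unbiased estimator of a first-order oracle for $\hat{f}_{t,\delta} \in \hat{\BF{F}}_\delta$; since $|f_t(\y_t)| \leq B_0$ and $\|\vv_t\| = 1$, its norm is bounded by $\tfrac{k}{\delta} B_0$. Hence $\C{A}$ effectively faces an adversary in $\op{Adv}^\t{o}_1(\hat{\BF{F}}, \tfrac{k}{\delta} B_0)$ over $\hat{\C{K}}_\delta$.

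The key step is a per-round regret decomposition. For any comparator sequence $\uu \in \C{U}$, writing $\hat{\uu}_t := (1 - \tfrac{\delta}{r}) \uu_t + \tfrac{\delta}{r} \BF{c} \in \hat{\C{U}}$, I would split
\begin{align*}
\alpha f_t(\uu_t) - f_t(\y_t)
&= \left[ \alpha \hat{f}_{t,\delta}(\hat{\uu}_t) - \hat{f}_{t,\delta}(\x_t) \right]
 + \alpha \left[ f_t(\uu_t) - \hat{f}_{t,\delta}(\hat{\uu}_t) \right] \\
&\quad + \left[ \hat{f}_{t,\delta}(\x_t) - f_t(\x_t) \right]
 + \left[ f_t(\x_t) - f_t(\y_t) \right].
\end{align*}
The $M_1$-Lipschitz property bounds $|\hat{f}_{t,\delta}(\z) - f_t(\z)| \leq \delta M_1$ and $|f_t(\x_t) - f_t(\y_t)| \leq \delta M_1$, while $\|\uu_t - \hat{\uu}_t\| = \tfrac{\delta}{r}\|\uu_t - \BF{c}\|$ combined with $\alpha \leq 1$ and the diameter bound on $\|\uu_t - \BF{c}\|$ contribute the remaining $O(\delta M_1 D/r)$ slack, giving an additive overhead that matches $(3 + 2D/r)\delta M_1 T$ once summed over $t$.

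Finally, taking expectations, summing over $t$, and taking the supremum over $\uu \in \C{U}$ (equivalently $\hat{\uu} \in \hat{\C{U}}$) and over oblivious adversaries turns the first bracketed term into exactly the inner regret $\C{R}_{\alpha, \op{Adv}^\t{o}_1(\hat{\BF{F}}, k B_0 / \delta)}^{\C{A}}(\hat{\C{U}})$, while the other brackets accumulate to the claimed overhead. The semi-bandit-to-bandit variant $\mathtt{STB}$ proceeds identically: since $\C{A}$ is already semi-bandit, the single zeroth-order query at $\y_t$ simultaneously serves as $\C{A}'$'s bandit action and as $\C{A}$'s single sample for the spherical gradient estimator, and the same decomposition applies. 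The main obstacle I anticipate is purely bookkeeping: propagating the factor $\alpha$ through the comparator shift and ensuring that the obliviousness of the adversary (enforced by $\op{Adv}^\t{o}$) makes $f_t$ independent of $\vv_t$, which is what legitimizes the unbiasedness step; the smoothing calculus itself is standard.
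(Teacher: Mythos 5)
Your proposal is correct and follows essentially the same route as the paper's proof: you couple the run of $\C{A}'$ on $\BF{F}$ with a run of $\C{A}$ against the smoothed adversary on $\hat{\BF{F}}$ over $\hat{\C{K}}_\delta$ via the sliced one-point estimator $\frac{k}{\delta}f_t(\cdot+\delta\vv)\vv$ (unbiased for $\nabla \hat{f}_t$ by obliviousness, bounded by $\frac{k}{\delta}B_0$), and then charge the smoothing, perturbation, and comparator-shrinkage errors to the $M_1$-Lipschitz property, which is exactly the paper's decomposition and yields the same $(3+2D/r)\delta M_1 T$ overhead. The only cosmetic difference is that your main derivation is phrased for the action-perturbed case (which is $\mathtt{STB}$, and is the worse of the two, with per-round slack $2\delta M_1$) while treating $\mathtt{FOTZO}$ as identical, whereas the paper presents $\mathtt{FOTZO}$ first, where the played action is unperturbed and only the (possibly multiple) query points are perturbed, so the slack there is only $\delta M_1$; this changes nothing in the stated bound.
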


\begin{theorem}\label{thm:first-order-to-det-zero-order}
Under the assumptions of Theorem~\ref{thm:first-order-to-zero-order}, if $\C{A}' = \mathtt{FOTZO\textrm{-}2P}(\C{A})$ where $\mathtt{FOTZO\textrm{-}2P}$ is described by Algorithm~\ref{alg:first-order-to-det-zeroth-order} and $0 < \alpha \leq 1$, we have
\begin{align*}
\C{R}_{\alpha, \op{Adv}^\t{o}_0(\BF{F})}^{\C{A}'}(\C{U})
\leq \C{R}_{\alpha, \op{Adv}^\t{o}_1(\hat{\BF{F}}, k M_1)}^{\C{A}}(\hat{\C{U}})
+ \left( 3 + \frac{2 D}{r} \right) \delta M_1 T.
\end{align*}
\end{theorem}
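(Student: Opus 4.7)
The plan is to mimic the proof of Theorem~\ref{thm:first-order-to-zero-order}, replacing the one-point unbiased gradient estimator with the symmetric two-point estimator. Concretely, $\mathtt{FOTZO\textrm{-}2P}$ runs $\C{A}$ on the shrunken set $\hat{\C{K}}_\delta$; at each round $t$ it receives $\x_t$ from $\C{A}$, samples $\vv_t$ uniformly from the unit sphere of $\C{L}_0 = \op{aff}(\C{K}) - \x_t$, plays a point close to $\x_t$ (namely $\x_t + \delta \vv_t$), queries the deterministic zeroth-order oracle at both $\x_t + \delta \vv_t$ and $\x_t - \delta \vv_t$, and forms
\[
\g_t = \tfrac{k}{2\delta}\bigl(f_t(\x_t + \delta \vv_t) - f_t(\x_t - \delta \vv_t)\bigr)\,\vv_t,
\]
which it feeds back to $\C{A}$ as first-order feedback on $\hat{f}_{t,\delta}$.

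First I would verify the two properties of $\g_t$ that make $\C{A}$'s run valid. Unbiasedness, $\B{E}_{\vv_t}[\g_t] = \nabla \hat{f}_{t,\delta}(\x_t)$, follows from the same Stokes-type identity used in the one-point case: because $\vv_t$ and $-\vv_t$ are equidistributed on the sphere, $\B{E}[f_t(\x_t+\delta\vv_t)\vv_t] = -\B{E}[f_t(\x_t-\delta\vv_t)\vv_t]$, so the two-point form equals $\tfrac{k}{\delta}\B{E}[f_t(\x_t+\delta\vv_t)\vv_t]$, which is exactly $\nabla \hat{f}_{t,\delta}(\x_t)$. Boundedness is the crucial improvement over FOTZO: by $M_1$-Lipschitzness of $f_t$, $|f_t(\x_t+\delta\vv_t)-f_t(\x_t-\delta\vv_t)| \leq 2\delta M_1$, so $\|\g_t\| \leq k M_1$ \emph{pointwise}. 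This lets us use a deterministic zeroth-order oracle while still presenting $\C{A}$ with a stochastic first-order oracle that is norm-bounded by $kM_1$, matching the bound in the statement.

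Consequently, $\C{A}$'s run inside $\C{A}'$ is indistinguishable from a run against an adversary in $\op{Adv}_1^{\t{o}}(\hat{\BF{F}}, k M_1)$ over $\hat{\C{U}}$, so its expected regret against $\hat{\uu} \in \hat{\C{U}}$ is at most $\C{R}_{\alpha,\op{Adv}_1^{\t{o}}(\hat{\BF{F}}, k M_1)}^{\C{A}}(\hat{\C{U}})$. It remains to translate this from $(\hat{\BF{F}},\hat{\C{U}})$ back to $(\BF{F},\C{U})$, which is the same Lipschitz bookkeeping as in Theorem~\ref{thm:first-order-to-zero-order}: for each round, $|f_t(\x_t + \delta\vv_t) - \hat{f}_{t,\delta}(\x_t)| \leq |f_t(\x_t + \delta\vv_t) - f_t(\x_t)| + |f_t(\x_t) - \hat{f}_{t,\delta}(\x_t)| \leq 2\delta M_1$; and for any comparator $\uu \in \C{U}$, the mapped point $\hat{\uu}_t := (1-\delta/r)\uu_t + (\delta/r)\BF{c} \in \hat{\C{U}}$ satisfies $\|\hat{\uu}_t - \uu_t\| \leq (\delta/r) D$, giving $|f_t(\uu_t) - \hat{f}_{t,\delta}(\hat{\uu}_t)| \leq (1 + D/r)\,\delta M_1$. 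Summing across $t$ and combining with $\alpha \leq 1$ produces the additive term $(3 + 2D/r)\delta M_1 T$.

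The main conceptual obstacle is not the regret bookkeeping (which is inherited essentially verbatim from Theorem~\ref{thm:first-order-to-zero-order}) but the verification that the two-point estimator has the same smoothed-gradient expectation while enjoying a \emph{pointwise} norm bound; this is what enables the reduction to deterministic zeroth-order feedback and allows the stochastic oracle bound $(k/\delta) B_0$ of the one-point proof to be replaced by the cleaner bound $k M_1$ in the statement.
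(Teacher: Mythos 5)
Your proposal follows essentially the same route as the paper's proof: define the smoothed adversary whose stochastic first-order oracle is the two-point estimator $\hat{\C{Q}}_t(\x) := \frac{k}{2\delta}\left( f_t(\x + \delta \vv) - f_t(\x - \delta \vv) \right)\vv$, verify it is an unbiased estimator of $\nabla \hat{f}_{t,\delta}$ with the pointwise norm bound $k M_1$ coming from Lipschitzness, observe that the run of $\C{A}$ inside $\C{A}'$ is then indistinguishable from a run against $\op{Adv}^\t{o}_1(\hat{\BF{F}}, k M_1)$, and close with the same Lipschitz bookkeeping between $(\BF{F}, \C{U})$ and $(\hat{\BF{F}}, \hat{\C{U}})$ used for Theorem~\ref{thm:first-order-to-zero-order}.

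One inaccuracy to flag: Algorithm~\ref{alg:first-order-to-det-zeroth-order} plays $\x_t$ itself; the perturbations $\pm\delta\vv_{t,i}$ are applied only to the query points $\y_{t,i}$ of the full-information base algorithm, not to the action (playing $\x_t + \delta\vv_t$ is what $\mathtt{STB}$ does). So your per-round accounting $|f_t(\x_t + \delta\vv_t) - \hat{f}_{t,\delta}(\x_t)| \leq 2\delta M_1$ analyzes a slightly different algorithm; for the actual $\mathtt{FOTZO\textrm{-}2P}$ the relevant error is the smaller $|f_t(\x_t) - \hat{f}_{t,\delta}(\x_t)| \leq \delta M_1$, so your chain of inequalities holds a fortiori and still lands within the stated $(3 + 2D/r)\delta M_1 T$ (the paper's own proof in fact obtains the tighter constant $(2 + 2D/r)\delta M_1 T$). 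Relatedly, the unbiasedness and boundedness of the estimator should be checked at each query point $\y_{t,i}$ rather than only at $\x_t$; your symmetry argument applies there verbatim, and the paper simply cites an existing lemma for this step.
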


{\bf Full information to trivial query: }
In this section, we discuss a meta-algorithm that converts algorithms that require full-information feedback into algorithms that have a trivial query oracle.
In particular, it converts algorithms that require first-order full-information feedback into semi-bandit algorithms and algorithms that require zeroth-order full-information feedback into bandit algorithms.

Here we assume that $\C{A}^\t{query}$ does not depend on the observations in the current round.
If the number of queries $k_t$ is not constant for each time-step, we simply assume that $\C{A}$ queries extra points and then discards them, so that we obtain an algorithm that queries exactly $K$ points at each time-step, where $K$ does not depend on $t$.  We say a function class $\BF{F}$ is closed under convex combination if for any $f_1, \cdots, f_k \in \BF{F}$ and any $\delta_1, \cdots, \delta_k \geq 0$ with $\sum_{i} \delta_i = 1$, we have $\sum_i \delta_i f_i \in \BF{F}$.

\begin{theorem}\label{thm:stoch-fi-to-sb}
Let $\C{A}$ be an online optimization algorithm with full-information feedback and with $K$ queries at each time-step where $\C{A}^\t{query}$ does not depend on the observations in the current round and $\C{A}' = \mathtt{SFTT}(\C{A})$.
Then, for any $M_1$-Lipschitz function class $\BF{F}$ that is closed under convex combination and any $B_1 \geq M_1$, $0 < \alpha \leq 1$ and $1 \leq a \leq b \leq T$, let $a' = \lfloor (a - 1)/L \rfloor + 1$, $b' = \lceil b/L \rceil$, $D = \op{diam}(\C{K})$ and let $\{T\}$ and $\{T/L\}$  denote the horizon of the adversary. Then, we have
\begin{align*}
\C{R}_{\alpha, \op{Adv}^\t{o}_1(\BF{F}, B_1)\{T\}}^{\C{A}'}(\C{K}_{\star}^T)[a, b]
\leq M_1 D K (b' - a' + 1) + L \C{R}_{\alpha, \op{Adv}^\t{o}_1(\BF{F}, B_1)\{T/L\}}^{\C{A}}(\C{K}_{\star}^{T/L})[a', b'],
\end{align*}
\end{theorem}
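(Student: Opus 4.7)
The plan is to analyse $\mathtt{SFTT}$ as a blocking (random-permutation) reduction. I would partition the $T$ rounds into $T/L$ disjoint blocks of length $L$ (assuming $L \mid T$ and $L \ge K$) and couple each block with one super-round of the base algorithm $\C{A}$. At super-round $\tau$ the base algorithm outputs an action $\x_\tau^{\C{A}}$ together with $K$ query points $\y_{\tau,1},\ldots,\y_{\tau,K}$, which by hypothesis are independent of the feedback received during the current round. I would then set the loss of super-round $\tau$ to the block average $\bar{f}_\tau := \frac{1}{L}\sum_{s \in \op{block}_\tau} f_s$. Closure of $\BF{F}$ under convex combination places $\bar{f}_\tau$ in $\BF{F}$, and $\bar{f}_\tau$ is still $M_1$-Lipschitz with gradients bounded by $B_1$.

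Inside each block, $\mathtt{SFTT}$ draws a uniformly random injection $\pi_\tau\colon\{1,\ldots,K\} \hookrightarrow \{(\tau-1)L+1,\ldots,\tau L\}$, plays $\y_{\tau,i}$ at round $\pi_\tau(i)$, and pads the remaining $L - K$ rounds by replaying $\x_\tau^{\C{A}}$. The observation at round $\pi_\tau(i)$ is $\nabla f_{\pi_\tau(i)}(\y_{\tau,i})$, which is forwarded to $\C{A}$ as its $i$-th stochastic first-order sample at super-round $\tau$. Since the outer adversary is oblivious, $(f_s)$ is fixed before $\pi_\tau$ is drawn, so
\[
\B{E}\bigl[\nabla f_{\pi_\tau(i)}(\y_{\tau,i})\bigr] = \frac{1}{L}\sum_{s \in \op{block}_\tau} \nabla f_s(\y_{\tau,i}) = \nabla \bar{f}_\tau(\y_{\tau,i}),
\]
an unbiased, $B_1$-bounded estimate of $\nabla \bar{f}_\tau(\y_{\tau,i})$. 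Hence the view presented to $\C{A}$ is a legitimate instance of $\op{Adv}_1^{\t{o}}(\BF{F}, B_1)\{T/L\}$ with loss sequence $(\bar{f}_\tau)$, and its regret guarantee can be invoked over super-rounds $[a',b']$ as a black box.

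Translating this back, taking expectation over $\pi_\tau$ yields
\[
\B{E}\Bigl[\sum_{s \in \op{block}_\tau} f_s(\x_s^{\C{A}'})\Bigr] = \sum_{i=1}^{K} \bar{f}_\tau(\y_{\tau,i}) + (L-K)\,\bar{f}_\tau(\x_\tau^{\C{A}}) = L\,\bar{f}_\tau(\x_\tau^{\C{A}}) + \sum_{i=1}^{K}\bigl(\bar{f}_\tau(\y_{\tau,i}) - \bar{f}_\tau(\x_\tau^{\C{A}})\bigr),
\]
and the last sum is at most $K M_1 D$ in absolute value by $M_1$-Lipschitzness and $\|\y_{\tau,i} - \x_\tau^{\C{A}}\| \le D$. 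Summing over $\tau \in [a',b']$, multiplying the comparator side by $L$ via the identity $\sum_{t} f_t(\uu) = L \sum_{\tau} \bar{f}_\tau(\uu)$ for fixed $\uu$, and extending a general $[a,b]$ to the full blocks $[(a'-1)L+1, b'L]$ using non-negativity of $\BF{F}$ (the standing assumption when $\alpha < 1$) delivers the stated inequality
\[
\C{R}^{\C{A}'}_{\alpha, \op{Adv}_1^{\t{o}}(\BF{F}, B_1)\{T\}}(\C{K}_{\star}^T)[a,b] \le M_1 D K (b' - a' + 1) + L\,\C{R}^{\C{A}}_{\alpha, \op{Adv}_1^{\t{o}}(\BF{F}, B_1)\{T/L\}}(\C{K}_{\star}^{T/L})[a',b'].
\]

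The hard part will be confirming that the random-permutation construction really does furnish $\C{A}$ with a valid oblivious stochastic first-order oracle for $(\bar{f}_\tau)$. This needs two ingredients: (i) $\y_{\tau,i}$ must be independent of $\pi_\tau$, which uses the hypothesis that $\C{A}^{\t{query}}$ ignores within-round feedback, so $\C{A}$'s queries are measurable with respect to the history before the block begins; and (ii) the sequence $(\bar{f}_\tau)$ must be obliviously determined, which uses that the outer adversary is oblivious. A secondary technical point is the boundary bookkeeping for partial blocks at the endpoints of $[a,b]$, where non-negativity of the losses is essential to keep the Lipschitz overhead from exceeding $M_1 D K$ per block.
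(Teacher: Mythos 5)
Your proposal is correct and follows essentially the same route as the paper's proof: you form the block averages $\hat{f}_q$ (valid by closure under convex combination), observe that the random placement of the $K$ pre-committed queries turns the forwarded first-order feedback into an unbiased, $B_1$-bounded oracle for $\hat{f}_q$ against an oblivious adversary, charge at most $K M_1 D$ per block for the rounds where a query point replaces the block action, and extend the window $[a,b]$ to whole blocks exactly as the paper does. The only cosmetic difference is that you write the forwarded observation as an exact gradient $\nabla f_{\pi_\tau(i)}(\y_{\tau,i})$ rather than the output of the stochastic oracle $\C{Q}_{\pi_\tau(i)}$, but the unbiasedness and boundedness argument is unchanged.
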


This result (proof in Appendix \ref{app:stoch-fi-to-sb}) is based on the idea of random permutations used in~\cite{zhang19_onlin_contin_submod_maxim,zhang23_onlin_learn_non_submod_maxim,pedramfar23_unified_projec_free_algor_adver}.

\textbf{Online to Offline:}  An offline optimization problem can be though of as an instance of online optimization where the adversary picks the same function and query oracle at each round.
Moreover, instead of regret, the performance of the algorithm is measured by sample complexity, i.e., the minimum number of queries required so that the expected error from the $\alpha$-approximation of the optimal value is less than $\epsilon$.

\begin{algorithm2e}[H]
\SetKwInOut{Input}{Input}\DontPrintSemicolon
\caption{Stochastic Full-information To Trivial query - $\mathtt{SFTT}(\C{A})$}
\label{alg:stoch-fi-to-sb}
\small
\Input{base algorithm $\C{A}$, horizon $T$, block size $L > K$.}
\For{$q = 1, 2, \dots, T/L$}{
    Let $\hat{\x}_q$ be the action chosen by $\C{A}^\t{action}$ \;
    Let $(\hat{\y}_q^i)_{i = 1}^{K}$ be the queries selected by $\C{A}^\t{query}$ \;
    Let $(t_{q,1}, \dots, t_{q,L})$ be a random permutation of $\{(q-1)L+1, \dots, qL\}$\;
    \For{$t = (q-1)L+1, \dots, qL$}{
        \eIf{$t = t_{q, i}$ for some $1 \leq i \leq K$}{
            Play the action $\x_t = \hat{\y}_q^i$ \;
            Return the observation to the query oracle as the response to the $i$-th query \;
        }{
            Play the action $\x_t = \hat{\x}_q$ \;
        }
    }
}
\end{algorithm2e}

Conversions of online algorithms to offline are referred to online-to-batch techniques and are well-known in the literature (See~\cite{shalev-shwartz12_onlin_learn_onlin_convex_optim}).
A simple approach is to simply run the online algorithm and if the actions chosen by the algorithm are $\x_1, \cdots, \x_T$, return $\x_t$ for $1 \leq t \leq T$ with probability $1/T$.
We use $\mathtt{OTB}$ to denote the meta-algorithm that uses this approach to convert online algorithms to offline algorithms.
The following theorem is a corollary which we include for completion. (See Appendix~\ref{app:online-to-offline} for the proof.)

\begin{theorem}\label{thm:online-to-offline}
Let $\C{A}$ be an online algorithm that queries no more than $K = T^\theta$ times per time-step that obtains an $\alpha$-regret bound of $O(T^{\eta})$ over an oblivious adversary $\op{Adv}$.
Then the sample complexity of $\mathtt{OTB}(\C{A})$ over $\{ (f, \C{Q}_f) \mid ((f, \C{Q}_f), \cdots, (f, \C{Q}_f)) \in \op{Adv}\}$ is $O(\epsilon^{-\frac{1 + \theta}{1 - \eta}})$.
\end{theorem}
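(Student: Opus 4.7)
The plan is a standard online-to-batch conversion specialized to $\alpha$-regret, using the fact that for an offline problem the adversary is forced to play the same $(f,\C{Q}_f)$ each round. First I would unfold the definition: for each $(f,\C{Q}_f)$ in the offline instance set, consider the oblivious adversary $\C{B}$ that plays $(f,\C{Q}_f)$ in every round. By hypothesis, running $\C{A}$ against $\C{B}$ for $T$ rounds yields
\begin{equation*}
\alpha \, T f(\x^\star) - \sum_{t=1}^T \B{E}[f(\x_t)]
= \C{R}_{\alpha,\C{B}}^{\C{A}}(\C{K}_\star^T)
\leq \C{R}_{\alpha,\op{Adv}}^{\C{A}}(\C{K}_\star^T)
= O(T^{\eta}),
\end{equation*}
where $\x^\star \in \argmax_{\x\in\C{K}} f(\x)$.

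Next I would use the definition of $\mathtt{OTB}(\C{A})$: its output $\x_{\t{out}}$ is a uniformly random element of $\{\x_1,\dots,\x_T\}$, so $\B{E}[f(\x_{\t{out}})] = \tfrac{1}{T}\sum_{t=1}^T \B{E}[f(\x_t)]$. Dividing the regret bound above by $T$ gives
\begin{equation*}
\alpha f(\x^\star) - \B{E}[f(\x_{\t{out}})] \leq O(T^{\eta - 1}).
\end{equation*}
To make the right-hand side at most $\epsilon$, it suffices to pick $T$ with $T^{\eta - 1} = \Theta(\epsilon)$, i.e.\ $T = \Theta(\epsilon^{-1/(1-\eta)})$ (which requires $\eta < 1$, the interesting regime).

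Finally I would count queries. Since $\C{A}$ issues at most $K = T^\theta$ queries per round over $T$ rounds, the total sample complexity is
\begin{equation*}
T \cdot K = T^{1+\theta}
= \Theta\!\left(\epsilon^{-\frac{1+\theta}{1-\eta}}\right),
\end{equation*}
as claimed. The argument is essentially a three-line calculation once the setup is in place; the only real subtlety is confirming that the offline instance is covered by the online $\alpha$-regret hypothesis, which holds because the constant adversary $(f,\C{Q}_f)^{\otimes T}$ lies in $\op{Adv}$ by the way the offline function set is defined in the statement. No step is a genuine obstacle; the main thing to be careful about is that the expectation over the randomness of $\C{A}$ and $\C{Q}_f$ on the algorithm side aligns with the expectation appearing in the regret definition, which it does by construction of $\mathtt{OTB}$.
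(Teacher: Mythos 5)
Your proposal is correct and follows essentially the same argument as the paper's proof: the paper also bounds the expected error by the regret divided by $T$ (which is exactly your uniform-selection step for $\mathtt{OTB}$), sets $T^{\eta-1}=\Theta(\epsilon)$, and counts $T^{1+\theta}$ total queries. Your write-up just spells out the constant-adversary reduction and the expectation bookkeeping that the paper leaves implicit.
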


\section{Applications}\label{sec:applications}

Figure~\ref{fig:main} captures the applications that are mentioned in Tables~\ref{tbl:adv},~\ref{tbl:offline} and~\ref{tbl:non-stationary}.
The exact statements are stated in Corollaries~\ref{cor:w-ada-reg} and~\ref{cor:d-reg} in the Appendix.
To obtain a result from the graph, let $\C{A}$ be one of $\mathtt{SO\textrm{-}OGA}$ or $\mathtt{IA}$ and select a directed path that has the following properties:
\begin{enumerate}
\item[(i)] The path starts at one of the three nodes on the left.
\item[(ii)]  The path must be at least of length 1 and the edges must be the same color.
\item[(iii)] If $\C{A}$ is $\mathtt{IA}$, the path should not contain $\mathtt{SFTT}$ or $\mathtt{OTB}$.
\end{enumerate}

\begin{figure}
\centering
\caption{\small Summary of applications (See Section~\ref{sec:applications}) }
\resizebox{.8\textwidth}{!}{\includegraphics[trim= 1.9in 0in 0in 0in,clip]{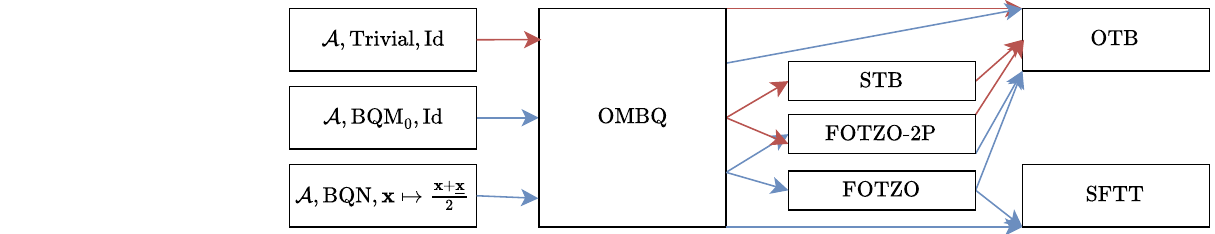}}
\label{fig:main}
\end{figure}

For example, if $\C{A} = \mathtt{SO\textrm{-}OGA}$ and the path starts at the middle node on the left, then passes through $\mathtt{OMBQ}$, $\mathtt{FOTZO}$, $\mathtt{SFTT}$, we get $\mathtt{SFTT}(
    \mathtt{FOTZO}(
        \mathtt{OMBQ}(
            \mathtt{SO\textrm{-}OGA}, \mathtt{BQM0}, \op{Id}
        )
    )
)$,
which is a projection-free algorithm (using separation oracles) with bandit feedback for monotone up-concave functions over convex sets that contain the origin.
As mentioned in Table~\ref{tbl:non-stationary} and Corollary~\ref{cor:w-ada-reg}-(c), the adaptive regret of this algorithm is of order $O(T^{4/5})$.
Note that the text written in the three nodes on the left correspond to the inputs of the meta-algorithm $\mathtt{OMBQ}$.
Also note that the color red corresponds to the setting where $\C{G}$ is a trivial query algorithm which means that the output of $\mathtt{OMBQ}$ is semi-bandit.

\section{Conclusions}

In this work, we have presented a comprehensive framework for addressing optimization problems involving upper-quadratizable functions, encompassing both concave and DR-submodular functions across various settings and feedback types. Our contributions include the formulation of upper-quadratizable functions as a generalized class, the development of meta-algorithms for algorithmic conversions, and the derivation of new  algorithms with improved static/ dynamic/ adaptive regret guarantees. Exploring more subset of classes of upper-quadratizable functions where such a framework could be applied is an important future direction.

\section{Acknowledgement}

This research was supported in part by the National Science Foundation under grant CCF-2149588.

\newpage

\bibliography{references}

\newpage

\appendix

\section{Related works}\label{sec:related}

\paragraph{DR-submodular maximization}

Two of the main methods for continuous DR-submodular maximization are
\emph{Frank-Wolfe type methods} and \emph{Boosting based methods}.
This division is based on how the approximation coefficient appears in the proof.

In Frank-Wolfe type algorithms, the approximation coefficient appears by specific choices of the Frank-Wolfe update rules. (See Lemma~8 in~\cite{pedramfar23_unified_projec_free_algor_adver})
The specific choices of the update rules for different settings have been proposed in~\cite{bian17_guaran_non_optim,bian17_contin_dr_maxim,mualem22_resol_approx_offlin_onlin_non,pedramfar23_unified_approac_maxim_contin_dr_funct,chen23_contin_non_dr_maxim_down_convex_const}.
The momentum technique of~\cite{mokhtari20_stoch_condit_gradien_method} has been used to convert algorithms designed for deterministic feedback to stochastic feedback setting.
\cite{hassani20_stoch_condit_gradien} proposed a Frank-Wolfe variant with access to a stochastic gradient oracle with \emph{known distribution}.
Frank-Wolfe type algorithms been adapted to the online setting using Meta-Frank-Wolfe~\cite{chen18_onlin_contin_submod_maxim,chen19_projec_free_bandit_convex_optim} or using Blackwell approachablity~\cite{niazadeh21_onlin_learn_offlin_greed_algor}.
Later \cite{zhang19_onlin_contin_submod_maxim} used a Meta-Frank-Wolfe with random permutation technique to obtain full-information results that only require a single query per function and also bandit results.
This was extended to another settings by~\cite{zhang23_onlin_learn_non_submod_maxim} and generalized to many different settings with improved regret bounds by~\cite{pedramfar23_unified_projec_free_algor_adver}.

Another approach, referred to as boosting, is to construct an alternative function such that maximization of this function results in approximate maximization of the original function.
Given this definition, we may consider the result of~\cite{hassani17_gradien_method_submod_maxim,chen18_onlin_contin_submod_maxim,fazel22_fast_first_order_method_monot} as the first boosting based results.
However, in these cases (i.e., the case of monotone DR-submodular functions over general convex sets), the alternative function is identical to the original function.
The term boosting in this context was first used in~\cite{zhang22_stoch_contin_submod_maxim} for monotone functions over convex sets containing the origin, based on ideas presented in~\cite{filmus12_tight_combin_algor_submod_maxim,mitra2021submodular}.
This idea was used later in~\cite{wan23_bandit_multi_dr_submod_maxim,liao23_improv_projec_onlin_contin_submod_maxim} in bandit and projection-free full-information settings.
Finally, in~\cite{zhang24_boost_gradien_ascen_contin_dr_maxim} a boosting based method was introduced for non-monotone functions over general convex sets.

\paragraph{Up-concave maximization}

Not all continuous DR-submodular functions are concave and not all concave functions are continuous DR-submodular.
\cite{mitra2021submodular} considers functions that are the sum of a concave and a continuous DR-submodular function.
It is well-known that continuous DR-submodular functions are concave along positive directions~\cite{calinescu11_maxim_monot_submod_funct_subjec_matroid_const,bian17_guaran_non_optim}.
Based on this idea, \cite{wilder18_equil} defined an up-concave function as a function that is concave along positive directions.
Up-concave maximization has been considered in the offline setting before, e.g.~\cite{lee23_non_smoot_hextb_smoot_robus_submod_maxim}, but not in online setting.
In this work, we focus on up-concave maximization which is a generalization of DR-submodular maximization.

\paragraph{Projection-free optimization}

In the past decade, numerous projection-free online convex optimization algorithms have emerged to tackle the computational limitations of their projection-based counterparts \cite{hazan12_projec,chen18_projec_free_onlin_optim_stoch_gradien,xie20_effic_projec_free_onlin_method,chen19_projec_free_bandit_convex_optim,hazan20_faster_projec_onlin_learn,garber20_improv_regret_bound_projec_bandit_convex_optim,mhammedi22_effic_projec_free_onlin_convex,garber22_new_projec_algor_onlin_convex}.
In the context of DR-submodular maximization, the Frank-Wolfe type methods discussed above are projection-free.

\paragraph{Non-stationary regret}

Dynamic regret was first analyzed in~\cite{zinkevich03_onlin} for first order deterministic feedback.
Later~\cite{zhang18_adapt_onlin_learn_dynam_envir} obtained the lower bound and optimal algorithm in this setting.
This was later expanded to bandit setting in~\cite{zhao21_bandit_convex_optim_non_envir}.
Adaptive regret was first analyzed in~\cite{hazan09_effic} and the first optimal algorithm for projection-free adaptive regret was proposed in~\cite{garber22_new_projec_algor_onlin_convex}.
We refer to ~\cite{hazan09_effic,besbes15_non_station_stoch_optim,daniely15_stron_adapt_onlin_learn,zhang18_dynam_regret_stron_adapt_method,zhang18_adapt_onlin_learn_dynam_envir,zhao20_dynam_regret_convex_smoot_funct,zhao21_bandit_convex_optim_non_envir,lu23_projec_adapt_regret_member_oracl,wang24_non_projec_free_onlin_learn,garber22_new_projec_algor_onlin_convex} and references therein for more details.

\paragraph{Optimization by quadratization}

The framework discussed here for analyzing online algorithms is based on the convex optimization framework introduced in~\cite{pedramfar24_unified_framew_analy_meta_onlin_convex_optim}.
We extend the framework to allows us to work with $\alpha$-regret.Moreover, \cite{pedramfar24_unified_framew_analy_meta_onlin_convex_optim} also demonstrates that algorithms that are designed for quadratic/linear optimization with fully adaptive adversary obtain a similar regret in the convex setting.
In this paper we introduce the notion of quadratizable functions generalizes this idea beyond convex functions to all quadratizable functions. (see Theorem~\ref{thm:main})
This allows us to integrate the boosting method with our framework to obtain various meta-algorithms for continuous DR-submodular maximization.

\section{Problem Setup in Detail}\label{apdx:setup}

In this section, we further expand on the description in Section \ref{main_def}.

A \textit{function class} is  a set of real-valued functions.
Given a set $\C{D}$, a \textit{function class} over $\C{D}$ is  a subset of all real-valued functions on $\C{D}$.
A set $\C{K} \subseteq \B{R}^d$ is called a \textit{convex set} if for all $\x,\y \in \C{K}$ and $\alpha \in [0, 1]$, we have $\alpha \x + (1 - \alpha) \y \in \C{K}$.
For any $\uu \in \C{K}^T$, we define the path length
$P_T(\uu) := \sum_{i = 1}^{T-1} \| \uu_i - \uu_{i+1} \|.$

A real-valued differentiable function $f$ is called concave if
$f(y) - f(x) \leq f'(x) (y - x)$,
for all $x, y \in \op{Dom}(f)$.
More generally, given $\mu \geq 0$ and $0 < \gamma \leq 1$, we say a real-valued differentiable function is \textit{$\mu$-strongly $\gamma$-weakly concave} if
\[
f(y) - f(x)
\leq \frac{1}{\gamma} \left( f(x)' (y - x) - \frac{\mu}{2} | y - x |^2 \right)
\]
for all $x, y \in \op{Dom}(f)$.

We say a differentiable function $f : \C{K} \to \B{R}$ is \textit{$\mu$-strongly $\gamma$-weakly up-concave} if it is $\mu$-strongly $\gamma$-weakly concave along positive directions.
Specifically if, for all $\x \leq \y$ in $\C{K}$, we have
\[
\gamma \left( \bra \nabla f(\y), \y - \x \ket + \frac{\mu}{2} \| \y - \x \|^2 \right)
\leq f(\y) - f(\x)
\leq \frac{1}{\gamma} \left( \bra \nabla f(\x), \y - \x \ket - \frac{\mu}{2} \| \y - \x \|^2 \right).
\]
This notion could be generalized in the following manner.
We say $\tilde{\nabla} f : \C{K} \to \B{R}^d$ is a \textit{$\mu$-strongly $\gamma$-weakly up-super-gradient} of $f$ if for all $\x \leq \y$ in $\C{K}$, we have
\[
\gamma \left( \bra \tilde{\nabla} f(\y), \y - \x \ket + \frac{\mu}{2} \| \y - \x \|^2 \right)
\leq f(\y) - f(\x)
\leq \frac{1}{\gamma} \left( \bra \tilde{\nabla} f(\x), \y - \x \ket - \frac{\mu}{2} \| \y - \x \|^2 \right).
\]
Then we say $f$ is $\mu$-strongly $\gamma$-weakly up-concave if it is continuous and it has a $\mu$-strongly $\gamma$-weakly up-super-gradient.
When it is clear from the context, we simply refer to $\tilde{\nabla} f$ as an up-super-gradient for $f$.
When $\gamma = 1$ and the above inequality holds for all $\x, \y \in \C{K}$, we say $f$ is $\mu$-strongly concave.

A differentiable function $f : \C{K} \to \B{R}$ is called \textit{continuous DR-submodular} if for all $\x \leq \y$, we have $\nabla f(\x) \geq \nabla f(\y)$.
More generally, we say $f$ is \textit{$\gamma$-weakly continuous DR-submodular} if for all $\x \leq \y$, we have $\nabla f(\x) \geq \gamma \nabla f(\y)$.
It follows that any $\gamma$-weakly continuous DR-submodular functions is $\gamma$-weakly up-concave.

Given a continuous monotone function $f : \C{K} \to \B{R}$, its curvature is defined as the smallest number $c \in [0, 1]$ such that
\[
f(\y + \z) - f(\y) \geq (1 - c) (f(\x + \z) - f(\x)),
\]
for all $\x, \y \in \C{K}$ and $\z \geq 0$ such that $\x + \z, \y + \z \in \C{K}$.
\footnote{
In the literature, the curvature is often defined for differentiable functions.
When $f$ is differentiable, we have
\[
c = 1 - \inf_{\x, \y \in \C{K}, 1 \leq i \leq d} \frac{[\nabla f(\y)]_i}{[\nabla f(\x)]_i}.
\]
}
We define the curvature of a function class $\BF{F}$ as the supremum of the curvature of functions in $\BF{F}$.

Online optimization problems can be formalized as a repeated game between an agent and an adversary.
The game lasts for $T$ rounds on a convex domain $\C{K}$ where $T$ and $\C{K}$ are known to both players.
In $t$-th round, the agent chooses an action $\x_t$ from an action set $\C{K} \subseteq \B{R}^d$, then the adversary chooses a loss function $f_t \in \BF{F}$ and a query oracle for the function $f_t$.
Then, for $1 \leq i \leq k_t$, the agent chooses a points $\y_{t, i}$ and receives the output of the query oracle.
Here $k_t$ denotes the total number of queries made by the agent at time-step $t$, which may or may not be known in advance.

To be more precise, an agent consists of a tuple $(\Omega^\C{A}, \C{A}^{\t{action}}, \C{A}^{\t{query}})$, where $\Omega^\C{A}$ is a probability space that captures all the randomness of $\C{A}$.
We assume that, before the first action, the agent samples $\omega \in \Omega$.
The next element in the tuple, $\C{A}^{\t{action}} = (\C{A}^{\t{action}}_1, \cdots, \C{A}^{\t{action}}_T)$ is a sequence of functions such that $\C{A}_t$ that maps the history $\Omega^\C{A} \times \C{K}^{t-1} \times \prod_{s = 1}^{t-1} (\C{K} \times \C{O})^{k_s}$ to $\x_t \in \C{K}$ where we use $\C{O}$ to denote range of the query oracle.
The last element in the tuple, $\C{A}^{\t{query}}$, is the query policy.
For each $1 \leq t \leq T$ and $1 \leq i \leq k_t$, $\C{A}^{\t{query}}_{t, i} : \Omega^\C{A} \times \C{K}^t \times \prod_{s = 1}^{t-1} (\C{K} \times \C{O})^{k_s} \times (\C{K} \times \C{O})^{i-1}$ is a function that, given previous actions and observations, either selects a point $\y_t^i \in \C{K}$, i.e., query, or signals that the query policy at this time-step is terminated.
We may drop $\omega$ as one of the inputs of the above functions when there is no ambiguity.
We say the agent query function is \textit{trivial} if $k_t = 1$ and $\y_{t, 1} = \x_t$ for all $1 \leq t \leq T$.
In this case, we simplify the notation and use the notation $\C{A} = \C{A}^{\t{action}} = (\C{A}_1, \cdots, \C{A}_T)$ to denote the agent action functions and assume that the domain of $\C{A}_t$ is $\Omega^\C{A} \times (\C{K} \times \C{O})^{t-1}$.

A query oracle is a function that provides the observation to the agent.
Formally, a query oracle for a function $f$ is a map $\C{Q}$ defined on $\C{K}$ such that for each $\x \in \C{K}$, the $\C{Q}(\x)$ is a random variable taking value in the observation space $\C{O}$.
The query oracle is called a \textit{stochastic value oracle} or \textit{stochastic zeroth order oracle} if $\C{O} = \B{R}$ and $f(\x) = \B{E}[\C{Q}(\x)]$.
Similarly, it is called a \textit{stochastic up-super-gradient oracle} or \textit{stochastic first order oracle} if $\C{O} = \B{R}^d$ and $\B{E}[\C{Q}(\x)]$ is a up-super-gradient of $f$ at $\x$.
In all cases, if the random variable takes a single value with probability one, we refer to it as a \textit{deterministic} oracle.
Note that, given a function, there is at most a single deterministic gradient oracle, but there may be many deterministic up-super-gradient oracles.
We will use $\nabla$ to denote the deterministic gradient oracle.
We say an oracle is bounded by $B$ if its output is always within the Euclidean ball of radius $B$ centered at the origin.
We say the agent takes \textit{semi-bandit feedback} if the oracle is first-order and the agent query function is trivial.
Similarly, it takes \textit{bandit feedback} if the oracle is zeroth-order and the agent query function is trivial.
\footnote{This is a slight generalization of the common use of the term bandit feedback. Usually, bandit feedback refers to the case where the oracle is a \textit{deterministic} zeroth-order oracle and the agent query function is trivial.}
If the agent query function is non-trivial, then we say the agent requires \textit{full-information feedback}.

An adversary $\op{Adv}$ is a set such that each element $\C{B} \in \op{Adv}$, referred to as a \textit{realized adversary}, is a sequence $(\C{B}_1, \cdots, \C{B}_T)$ of functions where each $\C{B}_t$ maps a tuple $(\x_1, \cdots, \x_t) \in \C{K}^t$ to a tuple $(f_t, \C{Q}_t)$ where $f_t \in \BF{F}$ and $\C{Q}_t$ is a query oracle for $f_t$.
We say an adversary $\op{Adv}$ is \textit{oblivious} if for any realization $\C{B} = (\C{B}_1, \cdots, \C{B}_T)$, all functions $\C{B}_t$ are constant, i.e., they are independent of $(\x_1, \cdots, \x_t)$.
In this case, a realized adversary may be simply represented by a sequence of functions $(f_1, \cdots, f_T) \in \BF{F}^T$ and a sequence of query oracles $(\C{Q}_1, \cdots, \C{Q}_T)$ for these functions.
We say an adversary is a \textit{weakly adaptive} adversary if each function $\C{B}_t$ described above does not depend on $\x_t$ and therefore may be represented as a map defined on $\C{K}^{t-1}$.
In this work we also consider adversaries that are \textit{fully adaptive}, i.e., adversaries with no restriction.
Clearly any oblivious adversary is a weakly adaptive adversary and any weakly adaptive adversary is a fully adaptive adversary.
Given a function class $\BF{F}$ and $i \in \{0, 1\}$, we use $\op{Adv}^\t{f}_i(\BF{F})$ to denote the set of all possible realized adversaries with deterministic $i$-th order oracles.
If the oracle is instead stochastic and bounded by $B$, we use $\op{Adv}^\t{f}_i(\BF{F}, B)$ to denote such an adversary.
Finally, we use $\op{Adv}^\t{o}_i(\BF{F})$ and $\op{Adv}^\t{o}_i(\BF{F}, B)$ to denote all oblivious realized adversaries with $i$-th order deterministic and stochastic oracles, respectively.

In order to handle different notions of regret with the same approach, for an agent $\C{A}$, adversary $\op{Adv}$, compact set $\C{U} \subseteq \C{K}^T$, approximation coefficient $0 < \alpha \leq 1$ and $1 \leq a \leq b \leq T$, we define \textit{regret} as
\begin{align*}
	\C{R}_{\alpha, \op{Adv}}^{\C{A}}(\C{U})[a, b] :=
	\sup_{\C{B} \in \op{Adv}} \B{E} \left[ \alpha \max_{\uu = (\uu_1, \cdots, \uu_T) \in \C{U}} \sum_{t = a}^b f_t(\uu_t) - \sum_{t = a}^b f_t(\x_t) \right],
\end{align*}
where the expectation in the definition of the regret is over the randomness of the algorithm and the query oracle.
We use the notation $\C{R}_{\alpha, \C{B}}^{\C{A}}(\C{U})[a, b] := \C{R}_{\alpha, \op{Adv}}^{\C{A}}(\C{U})[a, b]$ when $\op{Adv} = \{\C{B}\}$ is a singleton.
We may drop $\alpha$ when it is equal to 1.
When $\alpha < 1$, we often assume that the functions are non-negative.

\textit{Static adversarial regret} or simply \textit{adversarial regret} corresponds to $a = 1$, $b = T$ and $\C{U} = \C{K}_{\star}^T := \{(\x, \cdots, \x) \mid \x \in \C{K}\}$.
When $a = 1$, $b = T$ and $\C{U}$ contains only a single element then it is referred to as the \textit{dynamic regret} \cite{zinkevich03_onlin,zhang18_adapt_onlin_learn_dynam_envir}.
\textit{Adaptive regret}, is defined as
$\max_{1 \leq a \leq b \leq T} \C{R}_{\alpha, \op{Adv}}^{\C{A}}(\C{K}_{\star}^T)[a, b]$ \cite{hazan09_effic}.
We drop $a$, $b$ and $\C{U}$ when the statement is independent of their value or their value is clear from the context.

\section{Proof of Theorem~\ref{thm:main}}\label{app:main}

The proof is similar to the proof of Theorems~2 and~5 in~\cite{pedramfar24_unified_framew_analy_meta_onlin_convex_optim}.

\begin{proof}~

\textbf{Deterministic oracle:}

We first consider the case where $\C{G}$ is a deterministic query oracle for $\F{g}$.
Let $\oo_t = \F{g}(f_t, \x_t)$ denote the output of $\C{G}$ at time-step $t$.
For any realization $\C{B} = (\C{B}_1, \cdots, \C{B}_T) \in \op{Adv}_1^{\t{f}}(\BF{F})$, we define $\C{B}'_t(\x_1, \cdots, \x_t)$ to be the tuple $(q_t, \nabla)$ where
\[
\C{B}'_t(\x_1, \cdots, \x_t)
:= q_t
:= \y \mapsto \bra \oo_t, \y - \x_t \ket - \frac{\mu}{2} \| \y - \x_t \|^2,
\]
and $\C{B}' = (\C{B}'_1, \cdots, \C{B}'_T)$.
Note that each $\C{B}'_t$ is a deterministic function of $\x_1, \cdots, \x_t$ and therefore $\C{B}' \in \op{Adv}_1^{\t{f}}(\BF{F}_{\mu, \F{g}})$.
Since the algorithm uses semi-bandit feedback, the sequence of random vectors $(\x_1, \cdots, \x_T)$ chosen by $\C{A}$ is identical between the game with $\C{B}$ and $\C{B}'$.
Therefore, according to definition of quadratizable functions, for any $\y \in \C{K}$, we have
\begin{align*}
\beta \left( q_t(\y) - q_t(\x_t) \right)
= \beta \left( \bra \oo_t, \y - \x_t \ket - \frac{\mu}{2} \| \y - \x_t \|^2 \right)
\geq \alpha f_t(\y) - f_t(h(\x_t)),
\end{align*}
Therefore, we have
\begin{align}\label{eq:main:1}
\max_{\uu \in \C{U}} \left( \alpha \sum_{t = a}^b  f_t(\uu_t) - \sum_{t = a}^b f_t(h(\x_t)) \right)
\leq \beta \max_{\uu \in \C{U}} \left( \sum_{t = a}^b q_t(\uu_t) - \sum_{t = a}^b q_t(\x_t) \right),
\end{align}
Hence
\begin{align*}
\C{R}_{\alpha, \op{Adv}_1^{\t{f}}(\BF{F})}^{\C{A}'}
&= \sup_{\C{B} \in \op{Adv}_1^{\t{f}}(\BF{F})} \C{R}_{\alpha, \C{B}}^{\C{A}'} \\
&= \sup_{\C{B} \in \op{Adv}_1^{\t{f}}(\BF{F})} \B{E} \left[ \max_{\uu \in \C{U}} \left( \alpha \sum_{t = a}^b f_t(\uu_t) - \sum_{t = a}^b  f_t(h(\x_t)) \right) \right] \\
&\leq \sup_{\C{B} \in \op{Adv}_1^{\t{f}}(\BF{F})} \B{E} \left[ \beta \max_{\uu \in \C{U}} \left( \sum_{t = a}^b q_t(\uu_t) - \sum_{t = a}^b q_t(\x_t) \right) \right] \\
&\leq \beta \sup_{\C{B}' \in \op{Adv}_1^{\t{f}}(\BF{F}_{\mu, \F{g}})} \C{R}_{1, \C{B}'}^{\C{A}}
= \beta \C{R}_{1, \op{Adv}_1^{\t{f}}(\BF{F}_{\mu, \F{g}})}^{\C{A}}.
\end{align*}

\textbf{Stochastic oracle:}

Next we consider the case where $\C{G}$ is a stochastic query oracle for $\F{g}$.

Let $\Omega^\C{Q} = \Omega^\C{Q}_1 \times \cdots \times \Omega^\C{Q}_T$ capture all sources of randomness in the query oracles of $\op{Adv}_1^\t{o}(\BF{F}, B_1)$, i.e., for any choice of $\theta \in \Omega^\C{Q}$, the query oracle is deterministic.
Hence for any $\theta \in \Omega^\C{Q}$ and realized adversary $\C{B} \in \op{Adv}_1^\t{o}(\BF{F}, B_1)$, we may consider $\C{B}_\theta$ as an object similar to an adversary with a deterministic oracle.
However, note that $\C{B}_\theta$ does not satisfy the unbiasedness condition of the oracle, i.e., the returned value of the oracle is not necessarily the gradient of the function at that point.
Recall that $\C{B}_t$ maps a tuple $(\x_1, \cdots, \x_t)$ to a tuple of $f_t$ and a stochastic query oracle for $f_t$.
We will use $\B{E}_{\Omega^\C{Q}}$ to denote the expectation with respect to the randomness of query oracle and $\B{E}_{\Omega^\C{Q}_t}[\cdot] := \B{E}_{\Omega^\C{Q}}[\cdot | f_t, \x_t]$ to denote the expectation conditioned the action of the agent and the adversary.
Similarly, let $\B{E}_{\Omega^\C{A}}$ denote the expectation with respect to the randomness of the agent.
Let $\oo_t$ be the random variable denoting the output of $\C{G}$ at time-step $t$ and let
\[
\bar{\oo}_t := \B{E}[\oo_t \mid f_t, \x_t] = \B{E}_{\Omega^\C{Q}_t}[\oo_t] = \F{g}(f_t, \x_t).
\]

Similar to the deterministic case, for any realization $\C{B} = (f_1, \cdots, f_T) \in \op{Adv}^\t{o}(\BF{F})$ and any $\theta \in \Omega^\C{Q}$, we define $\C{B}'_{\theta, t}(\x_1, \cdots, \x_t) $ to be the pair $(q_t, \nabla)$ where
\[
q_t
:= \y \mapsto \bra \oo_t, \y - \x_t \ket - \frac{\mu}{2} \| \y - \x_t \|^2.
\]
We also define $\C{B}'_{\theta} := (\C{B}'_{\theta, 1}, \cdots, \C{B}'_{\theta, T})$.
Note that a specific choice of $\theta$ is necessary to make sure that the function returned by $\C{B}'_{\theta, t}$ is a deterministic function of $\x_1, \cdots, \x_t$ and not a random variable and therefore $\C{B}'_{\theta}$ belongs to $\op{Adv}_1^{\t{f}}(\BF{Q}_\mu[B_1])$.

Since the algorithm uses (semi-)bandit feedback, given a specific value of $\theta$, the sequence of random vectors $(\x_1, \cdots, \x_T)$ chosen by $\C{A}$ is identical between the game with $\C{B}_\theta$ and $\C{B}'_\theta$.
Therefore, for any $\uu_t \in \C{K}$, we have
\begin{align*}
\alpha f_t(\uu_t) - f_t(h(\x_t))
&\leq \beta \left( \bra \bar{\oo}_t, \uu_t - \x_t \ket - \frac{\mu}{2} \| \uu_t - \x_t \|^2 \right) \\
&= \beta \left( \bra \B{E}\left[ \oo_t \mid f_t, \x_t \right], \uu_t - \x_t \ket - \frac{\mu}{2} \| \uu_t - \x_t \|^2 \right) \\
&= \beta \left( \B{E}\left[ \bra \oo_t, \uu_t - \x_t \ket - \frac{\mu}{2} \| \uu_t - \x_t \|^2 \mid f_t, \x_t \right] \right) \\
&= \beta \left( \B{E}\left[ q_t(\uu_t) - q_t(\x_t) \mid f_t, \x_t \right] \right),
\end{align*}
where the first inequality follows from the fact that $f_t$ is up-quadratizable and $\bar{\oo}_t = \F{g}(f_t, \x_t)$.
Therefore we have
\begin{align*}
\B{E} \left[
\alpha \sum_{t = a}^b f_t(\uu_t) - \sum_{t = a}^b  f_t(h(\x_t))
\right]
&\leq
\beta \B{E} \left[
\sum_{t = a}^b \B{E} \left[ q_t(\uu_t) - q_t(\x_t) | f_t, \x_t \right]
\right] \\
&=
\beta \B{E} \left[
\sum_{t = a}^b q_t(\uu_t) - q_t(\x_t)
\right].
\end{align*}
Since $\C{B}$ is oblivious, the sequence $(f_1, \cdots, f_T)$ is not affected by the randomness of query oracles or the agent.
Therefore we have
\begin{align*}
\C{R}_{\alpha, \C{B}}^{\C{A}}
&=
\B{E} \left[
\alpha \max_{\uu \in \C{U}} \sum_{t = a}^b f_t(\uu_t) - \sum_{t = a}^b  f_t(h(\x_t))
\right] \\
&=
\max_{\uu \in \C{U}}
\B{E} \left[
\alpha \sum_{t = a}^b f_t(\uu_t) - \sum_{t = a}^b  f_t(h(\x_t))
\right] \\
&\leq
\beta \max_{\uu \in \C{U}}
\B{E} \left[
\sum_{t = a}^b q_t(\uu_t) - \sum_{t = a}^b q_t(\x_t)
\right] \\
&\leq
\beta \B{E} \left[
\max_{\uu \in \C{U}}
\left(
\sum_{t = a}^b q_t(\uu_t) - \sum_{t = a}^b q_t(\x_t)
\right)
\right]
= \beta \B{E} \left[
\C{R}_{1, \C{B}'_\theta}^{\C{A}}
\right],
\end{align*}
where the second inequality follows from Jensen's inequality.
Hence we have
\begin{align*}
\C{R}_{\alpha, \op{Adv}_1^\t{o}(\BF{F}, B_1)}^{\C{A}}
&=
\sup_{\C{B} \in \op{Adv}_1^\t{o}(\BF{F}, B_1)}
\C{R}_{\alpha, \C{B}}^{\C{A}}
\leq
\sup_{\C{B} \in \op{Adv}_1^\t{o}(\BF{F}, B_1), \theta \in \Omega^\C{Q}}
\beta \C{R}_{1, \C{B}'_\theta}^{\C{A}} \\
&\leq
\sup_{\C{B}' \in \op{Adv}_1^{\t{f}}(\BF{Q}_\mu[B_1])}
\beta \C{R}_{1, \C{B}'}^{\C{A}}
=
\beta \C{R}_{1, \op{Adv}_1^{\t{f}}(\BF{Q}_\mu[B_1])}^{\C{A}}
\qedhere
\end{align*}
\end{proof}

\section{Proof of Lemma~\ref{lem:dr_mono_general:quad}}\label{app:dr_mono_general:quad}

\begin{proof}
We have $\x \vee \y + \x \wedge \y = \x  + \y$.
Therefore, following the definition of curvature, we have
\begin{align*}
f(\x \vee \y) - f(\y)
\geq (1 - c) (f(\x) - f(\x \wedge \y)).
\end{align*}
Since $f$ is non-negative, this implies that
\begin{equation}
\begin{aligned}
\label{eq:lem:dr_mono_general:quad:1}
(f(\x \vee \y) - f(\x))
&+ \frac{1}{\gamma^2} (f(\x \wedge \y) - f(\x)) \\
&= (f(\y) - f(\x))
+ (f(\x \vee \y) - f(\y))
+ \frac{1}{\gamma^2} (f(\x \wedge \y) - f(\x)) \\
&\geq (f(\y) - f(\x)) + (1 - c - \frac{1}{\gamma^2}) (f(\x) - f(\x \wedge \y)) \\
&= f(\y) - (c + \frac{1}{\gamma^2}) f(\x) + (- 1 + c + \frac{1}{\gamma^2}) f(\x \wedge \y) \\
&\geq f(\y) - (c + \frac{1}{\gamma^2}) f(\x).
\end{aligned}
\end{equation}
On the other hand, according to the definition, we have
\begin{align*}
f(\x \vee \y) - f(\x)
&\leq \frac{1}{\gamma} \left( \bra \tilde{\nabla} f(\x), \x \vee \y - \x \ket - \frac{\mu}{2} \| \x \vee \y - \x \|^2 \right), \\
f(\x) - f(\x \wedge \y)
&\geq \gamma \left( \bra \tilde{\nabla} f(\x), \x - \x \wedge \y \ket + \frac{\mu}{2} \| \x - \x \wedge \y \|^2 \right).
\end{align*}
Therefore, using Inequality~\ref{eq:lem:dr_mono_general:quad:1} and the fact that $f(\x \wedge \y) \geq 0$, we see that
\begin{align*}
f(\y) - \frac{1 + c\gamma^2}{\gamma^2} f(\x)
&\leq (f(\x \vee \y) - f(\x))
+ \frac{1}{\gamma^2} (f(\x \wedge \y) - f(\x)) \\
&\leq \frac{1}{\gamma} \left( \bra \tilde{\nabla} f(\x), \x \vee \y - \x \ket
- \frac{\mu}{2} \| \x \vee \y - \x \|^2
+ \bra \tilde{\nabla} f(\x), \x \wedge \y - \x \ket \right. \\
&\qquad\qquad \left. - \frac{\mu}{2} \| \x - \x \wedge \y \|^2 \right) \\
&= \frac{1}{\gamma} \left( \bra \tilde{\nabla} f(\x), \x \vee \y + \x \wedge \y - 2\x \ket
- \frac{\mu}{2} \| \x \vee \y - \x \|^2
- \frac{\mu}{2} \| \x - \x \wedge \y \|^2 \right) \\
&= \frac{1}{\gamma} \left( \bra \tilde{\nabla} f(\x), \y - \x \ket
- \frac{\mu}{2} \| \x - \y \|^2 \right),
\end{align*}
where we used $\x \vee \y + \x \wedge \y = \x + \y$ and
\begin{align*}
\| \x \vee \y - \x \|^2 + \| \x - \x \wedge \y \|^2
&= \sum_{[y]_i \geq [x]_i} (y_i - x_i)^2
+ \sum_{[y]_i < [x]_i} (x_i - y_i)^2 \\
&= \sum_{i} (x_i - y_i)^2
= \| \x - \y \|^2
\end{align*}
in the last equality.
The claim now follows from multiply both sides by $\frac{\gamma^2}{1 + c\gamma^2}$.
\end{proof}

\section{Proof of Lemma~\ref{lem:dr_mono_zero:quad}}\label{app:dr_mono_zero:quad}

\begin{proof}
Clearly we have $F(\BF{0}) = 0$.
For any $\x \neq \BF{0}$, the integrand in the definition of $F$ is a continuous non-negative function of $z$ that is bounded by
\begin{align*}
\frac{\gamma e^{\gamma (z - 1)}}{(1 - e^{-\gamma})z} (f(z * \x) - f(\BF{0}))
\leq \frac{\gamma e^{\gamma (z - 1)}}{(1 - e^{-\gamma})z} M_1 \| z * \x \|
\leq \frac{\gamma}{1 - e^{-\gamma}} M_1 \| \x \|.
\end{align*}
Therefore $F$ is well-defined on $[0, 1]^d$.
Moreover, we have
\footnote{Note that we do not require the gradient $\nabla f$ to be defined everywhere for this equality to hold.
It is sufficient for $\nabla f$ to exist at Lebesgue almost every point on every line segment.
This is satisfied when the 1-dimensional Hausdorff measure of the set $\{ \x \in [0, 1]^d \mid \nabla f \t{ is undefined} \}$ is zero.
}
\begin{align*}
\int_0^1 \frac{\gamma e^{\gamma (z - 1)}}{(1 - e^{-\gamma})} \nabla f(z * \x) dz
= \nabla \int_0^1 \frac{\gamma e^{\gamma (z - 1)}}{(1 - e^{-\gamma})z} (f(z * \x) - f(\BF{0})) dz
= \nabla F(\x).
\end{align*}
It follows that $F$ is differentiable everywhere and $\B{E}\left[ \nabla f(\C{Z} * \x) \right] = \nabla F(\x)$.
To prove the last claim, first we note that
\begin{align*}
\frac{1 - e^{-\gamma}}{\gamma} \bra \nabla F(\x), \x \ket
&= \left\bra \int_0^1 e^{\gamma (z - 1)} \nabla f(z * \x) dz, \x \right\ket \\
&= \int_0^1 e^{\gamma (z - 1)} \bra \nabla f(z * \x), \x \ket dz \\
&= \int_0^1 e^{\gamma (z - 1)} d f(z * \x) \\
&= e^{\gamma (z - 1)} f(z * \x)|_{z = 0}^{z = 1} - \int_0^1 f(z * \x) \frac{d e^{\gamma (z - 1)}}{dz} dz \\
&= f(\x) - f(\BF{0}) - \int_0^1 \gamma e^{\gamma (z - 1)} f(z * \x) dz.
\end{align*}
On the other hand, using monotonicity and up-concavity of $f$, we have
\begin{align*}
\frac{1 - e^{-\gamma}}{\gamma} \bra \nabla F(\x), \y \ket
&= \int_0^1 e^{\gamma (z - 1)} \bra \nabla f(z * \x), \y \ket dz \\
&\geq \int_0^1 e^{\gamma (z - 1)} \bra \nabla f(z * \x), \y \vee (z * \x) - z * \x \ket dz \\
&\geq \int_0^1 \gamma e^{\gamma (z - 1)} \left( f(\y \vee (z * \x)) - f(z * \x) \right) dz \\
&\geq \int_0^1 \gamma e^{\gamma (z - 1)} \left( f(\y) - f(z * \x) \right) dz \\
&= (1 - e^{-\gamma}) f(\y)
- \left( \int_0^1 \gamma e^{\gamma (z - 1)} f(z * \x) dz \right), \\
\end{align*}
where we used $\int_0^1 e^{\gamma (z - 1)} \gamma dz = 1 - e^{-\gamma}$ in the last equality.
Therefore
\begin{align*}
\frac{1 - e^{-\gamma}}{\gamma} \bra \nabla F(\x), \y - \x \ket
&\geq (1 - e^{-\gamma}) f(\y) - f(\x) + f(\BF{0})
\geq (1 - e^{-\gamma}) f(\y) - f(\x).
\qedhere
\end{align*}
\end{proof}

\section{Proof of Lemma~\ref{lem:dr_nonmono:quad}} \label{app:dr_nonmono:quad}

We start by extending a useful lemma from the literature.
Versions of the following lemma for DR-submodular functions appeared in~\cite{bian17_contin_dr_maxim,chekuri15_multip_weigh_updat_concav_submod_funct_maxim,mualem22_resol_approx_offlin_onlin_non} and it was later extended to $\gamma$-weakly DR-submodular functions in~\cite{pedramfar24_unified_approac_maxim_contin_dr}.
Here we further extend it to $\gamma$-weakly up-concave functions.
The proof is similar, but we include it for completeness.

\begin{lemma}\label{lem:f_join_non-monotone}
For any two vectors $\x, \y \in [0, 1]^d$ and any continuously differentiable non-negative $\gamma$-weakly up-concave function $f$ we have
\[
f(\x \vee \y) \geq (1 - \gamma \|\x\|_\infty) f(\y).
\]
\end{lemma}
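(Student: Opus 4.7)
The plan is to split on the value of $\gamma$: for $\gamma = 1$, use a Jensen-type convex-combination argument along a carefully chosen line segment; for $\gamma < 1$, observe that $\gamma$-weak up-concavity already forces full monotonicity, making the claim immediate. Set $\lambda := \|\x\|_\infty$. The boundary cases $\lambda = 0$ (then $\x \vee \y = \y$ and the claim is an equality) and $\gamma\lambda \ge 1$ (then $(1-\gamma\lambda) f(\y) \le 0 \le f(\x \vee \y)$ by non-negativity of $f$) are immediate, so assume $0 < \lambda$ and $\gamma\lambda < 1$.

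For the main step in the case $\gamma = 1$, I introduce the auxiliary point $\BF{w} := \y + ((\x \vee \y) - \y)/\lambda$. I would first verify coordinatewise that $\BF{w} \in [0,1]^d$: when $x_i \le y_i$ we have $[\BF{w}]_i = y_i \in [0,1]$; when $x_i > y_i$ we have $[\BF{w}]_i = y_i + (x_i - y_i)/\lambda \le 1$, using $x_i \le \lambda$ together with $1 - 1/\lambda \le 0$. Elementary algebra then yields the key identity $\x \vee \y = \lambda \BF{w} + (1-\lambda)\y$, so $\y$, $\x \vee \y$, $\BF{w}$ are collinear along the positive direction $\BF{w} - \y = (\x-\y)^+/\lambda \ge \BF{0}$. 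Since $\gamma = 1$ makes $f$ ordinary concave along positive directions, Jensen's inequality gives $f(\x \vee \y) \ge \lambda f(\BF{w}) + (1-\lambda) f(\y) \ge (1-\lambda) f(\y)$ (via $f(\BF{w}) \ge 0$), which matches the claimed $(1-\gamma\lambda) f(\y)$.

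For $\gamma < 1$, a direct analog fails since $\gamma$-weak up-concavity does not afford a clean Jensen inequality (a direct two-sided elimination of the gradient only gives the factor $(1-\lambda)/(1-\lambda+\gamma^2\lambda)$, which can undershoot $1-\gamma\lambda$ when $\lambda$ is large). The resolution is to notice that $\gamma < 1$ up-concavity is in fact highly restrictive: plugging $\uu$ and $\uu + \varepsilon\vv$ with $\vv \ge \BF{0}$ into the upper bound and letting $\varepsilon \to 0^+$ yields $\bra \nabla f(\uu), \vv\ket \le (1/\gamma)\bra \nabla f(\uu), \vv\ket$, and since $1 - 1/\gamma < 0$ this forces $\bra \nabla f(\uu), \vv\ket \ge 0$ for every $\vv \ge \BF{0}$. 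Taking $\vv$ to range over the standard basis gives $\nabla f \ge \BF{0}$ pointwise, so $f$ is coordinatewise monotone non-decreasing; hence $f(\x \vee \y) \ge f(\y) \ge (1 - \gamma\|\x\|_\infty) f(\y)$. The main obstacle will be anticipating this collapse to monotonicity for $\gamma < 1$ instead of trying (and failing) to push a $\gamma$-weak Jensen-style argument through.
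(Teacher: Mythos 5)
Your proof is correct, but it takes a somewhat different route from the paper's. The paper argues uniformly in $\gamma$: with $\z=\x\vee\y-\y$ it writes $f(\x\vee\y)-f(\y)=\int_0^1\bra\z,\nabla f(\y+t\z)\ket\,dt$, rescales $t=\|\x\|_\infty t'$, lower-bounds the rescaled integrand $\bra\z,\nabla f(\y+\|\x\|_\infty t'\z)\ket$ by $\gamma\bra\z,\nabla f(\y+t'\z)\ket$ using the gradient comparison along the positive segment, and re-integrates to get $\|\x\|_\infty\,\gamma\,(f(\y+\z/\|\x\|_\infty)-f(\y))\ge-\gamma\|\x\|_\infty f(\y)$ by non-negativity. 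Your $\gamma=1$ case is essentially the integrated form of that computation: your auxiliary point $\BF{w}=\y+\z/\lambda$ is exactly the paper's upper endpoint of integration, and your Jensen step is the paper's inequality rearranged, once one notes that up-concavity makes the restriction of $f$ to $[\y,\BF{w}]$ concave (your membership check $\BF{w}\in[0,1]^d$ and the identity $\x\vee\y=\lambda\BF{w}+(1-\lambda)\y$ are both right). The genuine divergence is at $\gamma<1$: instead of the in-integral gradient comparison, you observe that the paper's definition (indeed already its upper inequality alone) forces, via $\varepsilon\to0^+$, $\bra\nabla f(\uu),\vv\ket\ge0$ for all $\vv\ge\BF{0}$, hence monotonicity and a trivial bound. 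This is valid, with the minor caveat that at points $\uu$ having a coordinate equal to $1$ you cannot take $\varepsilon>0$ in that coordinate direction, so you should either run the derivative argument along the open segment from $\y$ to $\x\vee\y$ or extend by continuity of $\nabla f$ (which the lemma's hypotheses supply). Your split also quietly sidesteps the one delicate step in the paper's route: two-sided $\gamma$-weak up-concavity by itself most directly yields the directional-derivative comparison with factor $\gamma^2$ rather than $\gamma$, whereas for you the comparison is only needed at $\gamma=1$, where the two coincide. What the paper's argument buys is a single case-free proof that reads as a direct extension of the DR-submodular versions of this lemma; what yours buys is elementary bookkeeping (no integral or change of variables) and the explicit identification that the $\gamma<1$ case is trivial under this definition — and since the paper invokes the lemma only with $\gamma=1$ (in the proof of Lemma~\ref{lem:dr_nonmono:quad}), your Jensen argument covers every use made of it.
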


\begin{proof}[Proof of Lemma~\ref{lem:f_join_non-monotone}]
If $\norm{\x}_\infty = 0$, then $\x$ is the zero vector, and the lemma is trivially true.
On the other hand, if $\x \vee \y = \y$, the lemma follows from non-negativity of $f$.
Thus, we may assume that $\z := \x \vee \y - \y > \BF{0}$ and $\norm{\x}_\infty > 0$.
We have
\begin{align}
f(\x \vee \y) - f(\y)
&=
\int_0^1 \left. \frac{df(\y + r \cdot \z)}{dr}\right|_{r = t} dt
=
\int_0^1 \langle \z, \nabla f(\y + t \cdot \z)\rangle dt \nonumber \\
&=
\|\x\|_\infty \cdot \int_0^{1/\|\x\|_\infty} \langle \z, \nabla f(\y + \|\x\|_\infty \cdot t' \cdot \z)\rangle dt' \nonumber \\
&\geq
\|\x\|_\infty \cdot \int_0^{1/\|\x\|_\infty} \langle \z, \gamma \nabla f(\y + t' \cdot \z)\rangle dt', \label{eq:diff_z}
\end{align}
where \eqref{eq:diff_z}
holds by changing the integration variable to $t' = t / \|\x\|_\infty$, and the inequality follows from $\gamma$-weakly up-concavity of $f$, in particular because $f$ is $\gamma$-weakly concave along the line segment $[\y, \y + t'.\z] \subseteq [0, 1]^d$.
To see that the last inclusion holds, note that, for every $1 \leq i \leq d$, if $x_i \leq y_i$, then $y_i + t' \cdot z_i = y_i \leq 1$, and if $x_i \geq y_i$, then
\begin{align*}
y_i + t' \cdot z_i
\leq
y_i + \frac{z_i}{\|\x\|_\infty}
=
y_i + \frac{x_i - y_i}{\|\x\|_\infty}
\leq
\frac{x_i}{\|\x\|_\infty}
\leq 1.
\end{align*}

Next we see that
\begin{align*}
\int_0^{1/\|\x\|_\infty} \langle \z, \gamma \nabla f(\y + t' \cdot \z)\rangle dt'
&= \gamma \int_0^{1/\|\x\|_\infty} \langle \z, \nabla f(\y + t' \cdot \z)\rangle dt' \\
&= \gamma \int_0^{1/\|\x\|_\infty} \left. \frac{df(\y + r \cdot \z)}{dr}\right|_{r = t'} dt' \\
&= \gamma f\left(\y + \frac{\z}{\|\x\|_\infty}\right) - \gamma f(\y)
\geq - \gamma f(\y),
\end{align*}
where the inequality follows from non-negativity of $f$.
The lemma now follows by plugging this inequality into Inequality~\eqref{eq:diff_z} and rearranging the terms.
\end{proof}

\begin{proof}[Proof of Lemma~\ref{lem:dr_nonmono:quad}]

Clearly we have $F(\underline{\x}) = 0$.
For any $\x \neq \underline{\x}$, the integrand in the definition of $F$ is a continuous function of $z$ that is bounded by
\begin{align*}
\left| \frac{2}{3 z (1 - \frac{z}{2})^3} \left( f\left(\frac{z}{2} * (\x - \underline{\x}) + \underline{\x} \right) - f(\underline{\x}) \right) \right|
&\leq \frac{2}{3 z (1 - \frac{z}{2})^3} M_1 \| \frac{z}{2} * (\x - \underline{\x}) \|
\leq \frac{8}{3} M_1 \| \x - \underline{\x} \|.
\end{align*}
Therefore $F$ is well-defined on $[0, 1]^d$.
Moreover, we have
\footnote{Similar to Lemma~\ref{lem:dr_mono_zero:quad}, we do not require the gradient $\nabla f$ to be defined everywhere for this equality to hold.
It is sufficient for $\nabla f$ to exist at Lebesgue almost every point on every line segment.
This is satisfied when the 1-dimensional Hausdorff measure of the set $\{ \x \in [0, 1]^d \mid \nabla f \t{ is undefined} \}$ is zero.
}
\begin{align*}
\int_0^1 \frac{1}{3 (1 - \frac{z}{2})^3} \nabla f\left(\frac{z}{2} * (\x - \underline{\x}) + \underline{\x} \right) dz
&= \nabla \int_0^1 \frac{2}{3 z (1 - \frac{z}{2})^3} \left( f\left(\frac{z}{2} * (\x - \underline{\x}) + \underline{\x} \right) - f(\underline{\x}) \right) dz \\
&= \nabla F(\x)
\end{align*}
It follows that $F$ is differentiable everywhere and $\B{E}\left[ \nabla f\left(\frac{\C{Z}}{2} * (\x - \underline{\x}) + \underline{\x} \right) \right] = \nabla F(\x)$.

To prove the last claim, let $\x_z := \frac{z}{2} * (\x - \underline{\x}) + \underline{\x}$ and $\omega(z) = \frac{1}{8 (1 - \frac{z}{2})^3}$.
We have
\begin{align*}
\frac{3}{8} \bra \nabla F(\x), \y \ket
&= \int_0^1 \omega(z) \bra \nabla f(\x_z), \y \ket dz \\
&= \int_0^1 \omega(z) \left(
    \bra \nabla f(\x_z), \y - \x_z \wedge \y \ket
    + \bra \nabla f(\x_z), \x_z \wedge \y - \x_z \ket
    + \bra \nabla f(\x_z), \x_z \ket \right) dz \\
&= \int_0^1 \omega(z) \left(
    \bra \nabla f(\x_z), \x_z \vee \y - \x_z \ket
    + \bra \nabla f(\x_z), \x_z \wedge \y - \x_z \ket
    + \bra \nabla f(\x_z), \x_z \ket \right) dz \\
&\geq \int_0^1 \omega(z) \left(
    ( f(\x_z \vee \y) - f(\x_z) )
    + ( f(\x_z \wedge \y) - f(\x_z) )
    + \bra \nabla f(\x_z), \x_z \ket \right) dz \\
&\geq \int_0^1 \omega(z) \left(
    f(\x_z \vee \y) - 2 f(\x_z)
    + \bra \nabla f(\x_z), \x_z \ket \right) dz.
\end{align*}
Using Lemma~\ref{lem:f_join_non-monotone}, we have
\begin{align*}
f(\x_z \vee \y)
&\geq (1 - \|\x_z\|_\infty) f(y) \\
&\geq \left(1 - \left(\left(1 - \frac{z}{2}\right) \|\underline{\x}\|_\infty + \frac{z}{2} \|\x\|_\infty \right)\right) f(y) \\
&\geq \left(1 - \left(\left(1 - \frac{z}{2}\right) \|\underline{\x}\|_\infty + \frac{z}{2}\right)\right) f(y) \\
&= \left(1 - \frac{z}{2}\right) \left(1 - \|\underline{\x}\|_\infty \right) f(y).
\end{align*}
Therefore
\begin{align}\label{eq:dr_nonmono:quad:1}
\frac{3}{8} \bra \nabla F(\x), \y \ket
&\geq \int_0^1 \omega(z) \left(
    \left(1 - \frac{z}{2}\right) \left(1 - \|\underline{\x}\|_\infty \right) f(y)
    - 2 f(\x_z)
    + \bra \nabla f(\x_z), \x_z \ket \right) dz.
\end{align}

Next we bound $\bra \nabla F(\x), \x \ket$.
We have
\begin{align*}
\frac{3}{8} \bra \nabla F(\x), \x \ket
= \int_0^1 \omega(z) \bra \nabla f(\x), \x - \x_z \ket dz
    + \int_0^1 \omega(z) \bra \nabla f(\x), \x_z \ket dz
\end{align*}
For the first term, we have
\begin{align*}
\int_0^1 \omega(z) \bra \nabla f(\x), \x - \x_z \ket dz
&= \int_0^1 \omega(z) \left\bra \nabla f(\x), \left(1 - \frac{z}{2}\right)(\x - \underline{\x}) \right\ket dz \\
&= \int_0^1 (2 - z) \omega(z) \left\bra \nabla f(\x), \frac{\x - \underline{\x}}{2} \right\ket dz \\
&= \int_0^1 (2 - z) \omega(z) df(\x_z) \\
&= (2 - z) \omega(z) f(\x_z) |_{z = 0}^1
    - \int_0^1 ((2 - z) \omega'(z) - \omega(z)) f(\x_z) dz \\
&= f(\x_1) - \frac{1}{4} f(\underline{\x})
    - \int_0^1 \frac{1}{4 (1 - \frac{z}{2})^3} f(\x_z) dz,
\end{align*}
which implies that
\begin{align}\label{eq:dr_nonmono:quad:2}
\frac{3}{8} \bra \nabla F(\x), \x \ket
&= f(\x_1) - \frac{1}{4} f(\underline{\x})
    - \int_0^1 \frac{1}{4 (1 - \frac{z}{2})^3} f(\x_z) dz
    + \int_0^1 \omega(z) \bra \nabla f(\x), \x_z \ket dz \nonumber \\
&\leq f(\x_1)
    - \int_0^1 2\omega(z) f(\x_z) dz
    + \int_0^1 \omega(z) \bra \nabla f(\x), \x_z \ket dz
\end{align}

Combining Equations~\ref{eq:dr_nonmono:quad:1} and~\ref{eq:dr_nonmono:quad:2}, we have
\begin{align*}
\frac{3}{8} \bra \nabla F(\x), \y - \x \ket
&\geq \int_0^1 \omega(z) \left(
    \left(1 - \frac{z}{2}\right) \left(1 - \|\underline{\x}\|_\infty \right) f(y)
    - 2 f(\x_z)
    + \bra \nabla f(\x_z), \x_z \ket \right) dz \\
&\qquad- f(\x_1)
    + \int_0^1 2\omega(z) f(\x_z) dz
    - \int_0^1 \omega(z) \bra \nabla f(\x), \x_z \ket dz \\
&= \frac{1 - \|\underline{\x}\|_\infty}{4} f(\y) \int_0^1 4 \left(1 - \frac{z}{2}\right) \omega(z) dz
    - f(\x_1) \\
&= \frac{1 - \|\underline{\x}\|_\infty}{4} f(\y) - f\left(\frac{\x + \underline{\x}}{2} \right).
\qedhere
\end{align*}
\end{proof}

\section{Proof of Theorem~\ref{thm:first-order-to-zero-order}}\label{app:first-order-to-zero-order}

The algorithms are special cases of Algorithms~2 and~3 in~\cite{pedramfar24_unified_framew_analy_meta_onlin_convex_optim} where the shrinking parameter and the smoothing parameter are equal.
We include a description of the algorithms for completion.

\begin{algorithm2e}[H]
\SetKwInOut{Input}{Input}\DontPrintSemicolon
\caption{First order to zeroth order - $\mathtt{FOTZO}(\C{A})$}
\label{alg:first-order-to-zeroth-order}
\small
\Input{ Shrunk domain $\hat{\C{K}}_\delta$, Linear space $\C{L}_0$, smoothing parameter $\delta \leq r$, horizon $T$, algorithm $\C{A}$}
Pass $\hat{\C{K}}_\delta$ as the domain to $\C{A}$ \;
$k \gets \op{dim}(\C{L}_0)$ \;
\For{$t = 1, 2, \dots, T$}{
$\x_t \gets $ the action chosen by $\C{A}$ \;
Play $\x_t$ \;
Let $f_t$ be the function chosen by the adversary \;
\For{$i$ starting from 1, while $\C{A}^\t{query}$ is not terminated for this time-step}{
    Sample $\vv_{t, i} \in \B{S}^1 \cap \C{L}_0$ uniformly \;
    Let $\y_{t, i}$ be the query chosen by $\C{A}^\t{query}$ \;
    Query the oracle at the point $\y_{t, i} + \delta \vv_{t, i}$ to get $o_{t, i}$ \;
    Pass $\frac{k}{\delta} o_t \vv_t$ as the oracle output to $\C{A}$ \;
}
}
\end{algorithm2e}

\begin{algorithm2e}[H]
\SetKwInOut{Input}{Input}\DontPrintSemicolon
\caption{Semi-bandit to bandit - $\mathtt{STB}(\C{A})$}
\label{alg:semi-bandit-to-bandit}
\small
\Input{ Shrunk domain $\hat{\C{K}}_\delta$, Linear space $\C{L}_0$, smoothing parameter $\delta \leq r$, horizon $T$, algorithm $\C{A}$}
Pass $\hat{\C{K}}_\delta$ as the domain to $\C{A}$ \;
$k \gets \op{dim}(\C{L}_0)$ \;
\For{$t = 1, 2, \dots, T$}{
Sample $\vv_t \in \B{S}^1 \cap \C{L}_0$ uniformly \;
$\x_t \gets $ the action chosen by $\C{A}$ \;
Play $\x_t + \delta \vv_t$ \;
Let $f_t$ be the function chosen by the adversary \;
Let $o_t$ be the output of the value oracle \;
Pass $\frac{k}{\delta} o_t \vv_t$ as the oracle output to $\C{A}$ \;
}
\end{algorithm2e}

\begin{algorithm2e}[H]
\SetKwInOut{Input}{Input}\DontPrintSemicolon
\caption{First order to zeroth order with Two Point gradient estimator - $\mathtt{FOTZO\textrm{-}2P}(\C{A})$}
\label{alg:first-order-to-det-zeroth-order}
\small
\Input{ Shrunk domain $\hat{\C{K}}_\delta$, Linear space $\C{L}_0$, smoothing parameter $\delta \leq r$, horizon $T$, algorithm $\C{A}$}
Pass $\hat{\C{K}}_\delta$ as the domain to $\C{A}$ \;
$k \gets \op{dim}(\C{L}_0)$ \;
\For{$t = 1, 2, \dots, T$}{
$\x_t \gets $ the action chosen by $\C{A}$ \;
Play $\x_t$ \;
Let $f_t$ be the function chosen by the adversary \;
\For{$i$ starting from 1, while $\C{A}^\t{query}$ is not terminated for this time-step}{
    Sample $\vv_{t, i} \in \B{S}^1 \cap \C{L}_0$ uniformly \;
    Let $\y_{t, i}$ be the query chosen by $\C{A}^\t{query}$ \;
    Query the deterministic oracle at the points $\y_{t, i} + \delta \vv_{t, i}$ and $\y_{t, i} + \delta \vv_{t, i}$ \;
    Pass $\frac{k}{2 \delta} \left( f_t(\y_{t, i} + \delta \vv_{t, i}) - f_t(\y_{t, i} - \delta \vv_{t, i}) \right) \vv_t$ as the oracle output to $\C{A}$ \;
}
}
\end{algorithm2e}

The proof of Theorems~\ref{thm:first-order-to-zero-order} and~\ref{thm:first-order-to-det-zero-order} are similar to the proof of Theorems~6,~7 and~8 in~\cite{pedramfar24_unified_framew_analy_meta_onlin_convex_optim}.
The only difference being that we prove the result for $\alpha$-regret instead of regret.
We include a proof for completion.

\begin{proof}[Proof of Theorem~\ref{thm:first-order-to-zero-order}]~

\textbf{Regret bound for $\mathtt{STB}$: }

Note that any realized adversary $\C{B} \in \op{Adv}^\t{o}_0(\BF{F}, B_0)$ may be represented as a sequence of functions $(f_1, \cdots, f_T)$ and a corresponding sequence of query oracles $(\C{Q}_1, \cdots, \C{Q}_T)$.
For such realized adversary $\C{B}$, we define $\hat{B}$ to be the realized adversary corresponding to $(\hat{f}_1, \cdots, \hat{f}_T)$ with the stochastic gradient oracles
\begin{align}\label{eq:semi-bandit-to-bandit:1}
\hat{\C{Q}}_t(\x) := \frac{k}{\delta} \C{Q}_t(\x + \delta \vv) \vv,
\end{align}
where $\vv$ is a random vector, taking its values uniformly from $\B{S}^1 \cap \C{L}_0 = \B{S}^1 \cap (\op{aff}(\C{K}) - \z)$, for any $\z \in \C{K}$ and $k = \op{dim}(\C{L}_0)$.
Since $\C{Q}_t$ is a stochastic value oracle for $f_t$, according to Remark~4 in~\cite{pedramfar23_unified_approac_maxim_contin_dr_funct}, $\hat{\C{Q}}_t(\x)$ is an unbiased estimator of $\nabla \hat{f}_t(\x)$.
\footnote{When using a spherical estimator, it was shown in~\cite{flaxman2005online} that $\hat{f}$ is differentiable even when $f$ is not.
When using a sliced spherical estimator as we do here, differentiability of $\hat{f}$ is not proved in~\cite{pedramfar23_unified_approac_maxim_contin_dr_funct}.
However, their proof is based on the proof for the spherical case and therefore the results carry forward to show that $\hat{f}$ is differentiable.}
Hence we have $\hat{B} \in \op{Adv}_1^\t{o}(\hat{\BF{F}}, \frac{k}{\delta}B_0)$.
Using Equation~\ref{eq:semi-bandit-to-bandit:1} and the definition of the Algorithm~\ref{alg:semi-bandit-to-bandit}, we see that the responses of the queries are the same between the game $(\C{A}, \hat{\C{B}})$ and $(\C{A}', \C{B})$.
It follows that the sequence of actions $(\x_1, \cdots, \x_T)$ in $(\C{A}, \hat{\C{B}})$ corresponds to the sequence of actions $(\x_1 + \delta \vv_1, \cdots, \x_T + \delta \vv_T)$ in $(\C{A}', \C{B})$.

Let $\uu \in \op{argmax}_{\uu \in \C{U}} \sum_{t = a}^b f_t(\uu_t)$ and $\hat{\uu} \in \op{argmax}_{\uu \in \hat{\C{U}}} \sum_{t = a}^b \hat{f_t}(\uu_t)$.
We have
\begin{align}
\C{R}_{\alpha, \C{B}}^{\C{A}'}
- \C{R}_{\alpha, \hat{\C{B}}}^{\C{A}}
&= \B{E}\left[ \alpha \sum_{t = a}^b f_t(\uu_t) - \sum_{t = a}^b f_t(\x_t + \delta \vv_t) \right]
- \B{E}\left[ \alpha \sum_{t = a}^b \hat{f_t}(\hat{\uu}_t) - \sum_{t = a}^b \hat{f_t}(\x_t) \right] \nonumber \\
&= \B{E}\left[ \left( \sum_{t = a}^b \hat{f_t}(\x_t) - \sum_{t = a}^b f_t(\x_t + \delta \vv_t) \right)
+ \alpha \left( \sum_{t = a}^b f_t(\uu_t) - \sum_{t = a}^b \hat{f_t}(\hat{\uu}_t) \right)\right].
\label{eq:semi-bandit-to-bandit:2}
\end{align}

According to Lemma~3 in~\cite{pedramfar23_unified_approac_maxim_contin_dr_funct}, we have
$| \hat{f_t}(\x_t) - f_t(\x_t) | \leq \delta M_1$.
By using Lipschitz property for the pair $(\x_t, \x_t + \delta \vv_t)$, we see that
\begin{align}\label{eq:semi-bandit-to-bandit:4}
| f_t(\x_t + \delta \vv_t) - \hat{f_t}(\x_t) |
\leq | f_t(\x_t + \delta \vv_t) - f_t(\x_t) | + | f_t(\x_t) - \hat{f_t}(\x_t) |
\leq 2 \delta M_1.
\end{align}
On the other hand, we have
\begin{align*}
\sum_{t = a}^b \hat{f_t}(\hat{\uu}_t)
&= \max_{\hat{\uu} \in \hat{\C{U}}} \sum_{t = a}^b \hat{f_t}(\hat{\uu}_t) \\
&\geq - \delta M_1 T + \max_{\hat{\uu} \in \hat{\C{U}}} \sum_{t = a}^b f_t(\hat{\uu}_t) \tag{Lemma~3 in~\cite{pedramfar23_unified_approac_maxim_contin_dr_funct}} \\
&= - \delta M_1 T + \max_{\uu \in \C{U}} \sum_{t = a}^b f_t\left(\left(1 - \frac{\delta}{r} \right) \uu_t + \frac{\delta}{r} \BF{c} \right)
\tag{Definition of $\hat{\C{U}}$}\\
&= - \delta M_1 T + \max_{\uu \in \C{K}} \sum_{t = a}^b f_t\left(\uu_t + \frac{\delta}{r} (\BF{c} - \x) \right) \\
&\geq - \delta M_1 T + \max_{\uu \in \C{U}} \sum_{t = a}^b \left( f_t(\uu_t) - \frac{2 \delta M_1 D}{r} \right)
\tag{Lipschitz} \\
&= - \left( 1 + \frac{2 D}{r} \right) \delta M_1 T + \sum_{t = a}^b f_t(\uu_t)
\end{align*}
Therefore, using Equation~\ref{eq:semi-bandit-to-bandit:2}, we see that
\begin{align*}
\C{R}_{\C{B}}^{\C{A}'}
- \C{R}_{\hat{\C{B}}}^{\C{A}}
&\leq 2 \delta M_1 T  + \alpha\left( 1 + \frac{2 D}{r} \right) \delta M_1 T
\leq \left( 3 + \frac{2 D}{r} \right) \delta M_1 T.
\end{align*}
Therefore, we have
\begin{align*}
\C{R}_{\op{Adv}^\t{o}_0(\BF{F}, B_0)}^{\C{A}'}
&= \sup_{\C{B} \in \op{Adv}^\t{o}_0(\BF{F}, B_0)} \C{R}_{\C{B}}^{\C{A}'} \\
&\leq \sup_{\C{B} \in \op{Adv}^\t{o}_0(\BF{F}, B_0)} \C{R}_{\hat{\C{B}}}^{\C{A}}
+ \left( 3 + \frac{2 D}{r} \right) \delta M_1 T \\
&\leq \sup_{\C{B} \in \op{Adv}^\t{o}_0(\BF{F}, B_0)} \C{R}_{\hat{\C{B}}}^{\C{A}}
+ \left( 3 + \frac{2 D}{r} \right) \delta M_1 T \\
&\leq \C{R}_{\op{Adv}^\t{o}_1(\hat{\BF{F}}, \frac{k}{\delta} B_0)}^{\C{A}}
+ \left( 3 + \frac{2 D}{r} \right) \delta M_1 T.
\end{align*}

\textbf{Regret bound for $\mathtt{FOTZO}$: }

The proof of the bounds for this case is similar to the previous case.
As before, we see that the responses of the queries are the same between the game $(\C{A}, \hat{\C{B}})$ and $(\C{A}', \C{B})$.
It follows from the description of Algorithm~\ref{alg:first-order-to-zeroth-order} that the sequence of actions $(\x_1, \cdots, \x_T)$ in $(\C{A}, \hat{\C{B}})$ corresponds to the same sequence of actions in $(\C{A}', \C{B})$.

Let $\uu \in \op{argmax}_{\uu \in \C{U}} \sum_{t = a}^b f_t(\uu_t)$ and $\hat{\uu} \in \op{argmax}_{\uu \in \hat{\C{U}}} \sum_{t = a}^b \hat{f_t}(\uu_t)$.
We have
\begin{align}
\C{R}_{\C{B}}^{\C{A}'}
- \C{R}_{\hat{\C{B}}}^{\C{A}}
&= \B{E}\left[ \alpha \sum_{t = a}^b f_t(\uu_t) - \sum_{t = a}^b f_t(\x_t) \right]
- \B{E}\left[ \alpha \sum_{t = a}^b \hat{f_t}(\hat{\uu}_t) - \sum_{t = a}^b \hat{f_t}(\x_t) \right] \nonumber \\
&= \B{E}\left[ \left( \sum_{t = a}^b \hat{f_t}(\x_t) - \sum_{t = a}^b f_t(\x_t) \right)
+ \alpha \left( \sum_{t = a}^b f_t(\uu_t) - \sum_{t = a}^b \hat{f_t}(\hat{\uu}_t) \right)\right].
\label{eq:semi-bandit-to-bandit:5}
\end{align}
To obtain the same bound as before, instead of Inequality~\ref{eq:semi-bandit-to-bandit:4}, we have
\begin{align*}| f_t(\x_t) - \hat{f_t}(\x_t) |
\leq \delta M_1
< 2 \delta M_1.
\end{align*}
The rest of the proof follows verbatim.
\end{proof}

\begin{proof}[Proof of Theorem~\ref{thm:first-order-to-det-zero-order}]
Note that any realized adversary $\C{B} \in \op{Adv}^\t{o}_0(\BF{F})$ may be represented as a sequence of functions $(f_1, \cdots, f_T)$.
For such realized adversary $\C{B}$, we define $\hat{B}$ to be the realized adversary corresponding to $(\hat{f}_1, \cdots, \hat{f}_T)$ with the stochastic gradient oracles
\begin{align}\label{eq:first-order-to-det-zero-order:1}
\hat{\C{Q}}_t(\x) := \frac{k}{2\delta} \left( f_t(\x + \delta \vv) - f_t(\x - \delta \vv) \right) \vv,
\end{align}
where $\vv$ is a random vector, taking its values uniformly from $\B{S}^1 \cap \C{L}_0 = \B{S}^1 \cap (\op{aff}(\C{K}) - \z)$, for any $\z \in \C{K}$ and $k = \op{dim}(\C{L}_0)$.
Since $\C{Q}_t$ is a stochastic value oracle for $f_t$, according to Lemma~5 in~\cite{pedramfar23_unified_approac_maxim_contin_dr_funct}, $\hat{\C{Q}}_t(\x)$ is an unbiased estimator of $\nabla \hat{f}_t(\x)$.
Moreover, we have
\begin{align*}
\| \frac{k}{2\delta} \left( f_t(\x + \delta \vv) - f_t(\x - \delta \vv) \right) \vv \|
\leq \frac{k}{2\delta} M_1 \| (\x + \delta \vv) - (\x - \delta \vv) \|
\leq k M_1.
\end{align*}
Hence we have $\hat{B} \in \op{Adv}_1^\t{o}(\hat{\BF{F}}, k M_1)$.
Using Equation~\ref{eq:first-order-to-det-zero-order:1} and the definition of the Algorithm~\ref{alg:first-order-to-zeroth-order}, we see that the responses of the queries are the same between the game $(\C{A}, \hat{\C{B}})$ and $(\C{A}', \C{B})$.
It follows that the sequence of actions $(\x_1, \cdots, \x_T)$ in $(\C{A}, \hat{\C{B}})$ corresponds to the same sequence of actions in $(\C{A}', \C{B})$.

Let $\uu \in \op{argmax}_{\uu \in \C{U}} \sum_{t = a}^b f_t(\uu_t)$ and $\hat{\uu} \in \op{argmax}_{\uu \in \hat{\C{U}}} \sum_{t = a}^b \hat{f_t}(\uu_t)$.
We have
\begin{align}
\C{R}_{\C{B}}^{\C{A}'}
- \C{R}_{\hat{\C{B}}}^{\C{A}}
&= \B{E}\left[ \alpha \sum_{t = a}^b f_t(\uu_t) - \sum_{t = a}^b f_t(\x_t) \right]
- \B{E}\left[ \alpha \sum_{t = a}^b \hat{f_t}(\hat{\uu}_t) - \sum_{t = a}^b \hat{f_t}(\x_t) \right] \nonumber \\
&= \B{E}\left[ \left( \sum_{t = a}^b \hat{f_t}(\x_t) - \sum_{t = a}^b f_t(\x_t) \right)
+ \alpha \left( \sum_{t = a}^b f_t(\uu_t) - \sum_{t = a}^b \hat{f_t}(\hat{\uu}_t) \right)\right].
\label{eq:first-order-to-det-zero-order:2}
\end{align}

According to Lemma~3 in~\cite{pedramfar23_unified_approac_maxim_contin_dr_funct}, we have
$| \hat{f_t}(\x_t) - f_t(\x_t) | \leq \delta M_1$.
On the other hand, we have
\begin{align*}
\sum_{t = a}^b \hat{f_t}(\hat{\uu}_t)
&= \max_{\hat{\uu} \in \hat{\C{U}}} \sum_{t = a}^b \hat{f_t}(\hat{\uu}_t) \\
&\geq - \delta M_1 T + \max_{\hat{\uu} \in \hat{\C{U}}} \sum_{t = a}^b f_t(\hat{\uu}_t) \tag{Lemma~3 in~\cite{pedramfar23_unified_approac_maxim_contin_dr_funct}} \\
&= - \delta M_1 T + \max_{\uu \in \C{U}} \sum_{t = a}^b f_t\left(\left(1 - \frac{\delta}{r} \right) \uu_t + \frac{\delta}{r} \BF{c} \right)
\tag{Definition of $\hat{\C{U}}$}\\
&= - \delta M_1 T + \max_{\uu \in \C{K}} \sum_{t = a}^b f_t\left(\uu_t + \frac{\delta}{r} (\BF{c} - \x) \right) \\
&\geq - \delta M_1 T + \max_{\uu \in \C{U}} \sum_{t = a}^b \left( f_t(\uu_t) - \frac{2 \delta M_1 D}{r} \right)
\tag{Lipschitz} \\
&= - \left( 1 + \frac{2 D}{r} \right) \delta M_1 T + \sum_{t = a}^b f_t(\uu_t)
\end{align*}
Therefore, using Equation~\ref{eq:first-order-to-det-zero-order:2}, we see that
\begin{align*}
\C{R}_{\C{B}}^{\C{A}'}
- \C{R}_{\hat{\C{B}}}^{\C{A}}
&\leq \delta M_1 T  + \alpha \left( 1 + \frac{2 D}{r} \right) \delta M_1 T
\leq \left( 2 + \frac{2 D}{r} \right) \delta M_1 T.
\end{align*}
Therefore, we have
\begin{align*}
\C{R}_{\op{Adv}^\t{o}_0(\BF{F})}^{\C{A}'}
&= \sup_{\C{B} \in \op{Adv}^\t{o}_0(\BF{F})} \C{R}_{\C{B}}^{\C{A}'} \\
&\leq \sup_{\C{B} \in \op{Adv}^\t{o}_0(\BF{F})} \C{R}_{\hat{\C{B}}}^{\C{A}}
+ \left( 2 + \frac{2 D}{r} \right) \delta M_1 T \\
&\leq \sup_{\C{B} \in \op{Adv}^\t{o}_0(\BF{F})} \C{R}_{\hat{\C{B}}}^{\C{A}}
+ \left( 2 + \frac{2 D}{r} \right) \delta M_1 T \\
&\leq \C{R}_{\op{Adv}^\t{o}_1(\hat{\BF{F}}, k M_1)}^{\C{A}}
+ \left( 2 + \frac{2 D}{r} \right) \delta M_1 T.
\qedhere
\end{align*}
\end{proof}

\begin{corollary}\label{cor:first-order-to-zero-order}
Under the assumptions of Theorem~\ref{thm:first-order-to-zero-order}, if we have
$\C{R}_{\alpha, \op{Adv}^\t{o}_1(\BF{F}, B_1)}^{\C{A}} = O(B_1 T^\eta)$ and $\delta = T^{(\eta - 1)/2}$, then we have
\begin{align*}
\C{R}_{\alpha, \op{Adv}^\t{o}_0(\BF{F}, B_0)}^{\C{A}'}
= O(B_0 T^{(1 + \eta)/2}).
\end{align*}
\end{corollary}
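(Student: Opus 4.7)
The plan is to apply Theorem~\ref{thm:first-order-to-zero-order} directly and then optimize over the smoothing parameter $\delta$. Since the corollary is essentially a tuning computation that balances the two terms appearing in the regret bound, the proof should be a few lines of arithmetic rather than a new argument.

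First I would recall the bound from Theorem~\ref{thm:first-order-to-zero-order}, namely
\begin{align*}
\C{R}_{\alpha, \op{Adv}^\t{o}_0(\BF{F}, B_0)}^{\C{A}'}(\C{U})
\leq \C{R}_{\alpha, \op{Adv}^\t{o}_1(\hat{\BF{F}}, \tfrac{k}{\delta}B_0)}^{\C{A}}(\hat{\C{U}})
+ \left(3 + \frac{2D}{r}\right)\delta M_1 T,
\end{align*}
where $\hat{\BF{F}}$ is the smoothed function class. Next, I would invoke the hypothesis that $\C{R}_{\alpha, \op{Adv}^\t{o}_1(\BF{F}, B_1)}^{\C{A}} = O(B_1 T^\eta)$ with the stochastic gradient oracle produced by $\mathtt{FOTZO}$/$\mathtt{STB}$ being bounded by $B_1 := \tfrac{k}{\delta}B_0$. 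Here I should briefly note that the smoothed class $\hat{\BF{F}}$ inherits the relevant properties (e.g.\ Lipschitzness with the same constant $M_1$), so the regret assumption applies to $\hat{\BF{F}}$ as well, yielding a first-term bound of $O\!\left(\tfrac{k B_0}{\delta} T^\eta\right)$.

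With that in place, the total regret is $O\!\left(\tfrac{B_0 T^\eta}{\delta} + \delta M_1 T\right)$, treating $k$, $D$, $r$, and $M_1$ as constants. Substituting $\delta = T^{(\eta - 1)/2}$ gives
\begin{align*}
\frac{B_0 T^\eta}{\delta} = B_0 T^{\eta - (\eta-1)/2} = B_0 T^{(\eta+1)/2},
\qquad
\delta T = T^{(\eta+1)/2},
\end{align*}
so both terms match at the order $O(B_0 T^{(1+\eta)/2})$, which is the claimed bound.

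There is essentially no hard part here: the result is a standard balance-of-terms optimization, and the only subtlety I would be careful about is confirming that $\delta = T^{(\eta-1)/2} < r$ for sufficiently large $T$ (automatic since $\eta < 1$ in all applications of interest) so that the shrinking construction underlying Theorem~\ref{thm:first-order-to-zero-order} is well-defined. Once that is observed, the corollary follows immediately.
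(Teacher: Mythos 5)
Your proposal is correct and is exactly the intended argument: apply Theorem~\ref{thm:first-order-to-zero-order} with the induced first-order oracle bound $B_1 = \tfrac{k}{\delta}B_0$ (noting that the smoothed class $\hat{\BF{F}}$ retains the needed properties), then substitute $\delta = T^{(\eta-1)/2}$ so both terms are of order $T^{(1+\eta)/2}$, treating $k$, $D$, $r$, $M_1$ as constants. The paper states this corollary without proof precisely because it is this immediate balance-of-terms computation, and your extra check that $\delta < r$ for large $T$ is a harmless refinement.
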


\begin{corollary}\label{cor:first-order-to-det-zero-order}
Under the assumptions of Theorem~\ref{thm:first-order-to-det-zero-order}, if we have $\delta = T^{-1}$, then $\C{R}_{\alpha, \op{Adv}^\t{o}_0(\BF{F})}^{\C{A}'}$ has the same order of regret as that of
$\C{R}_{\alpha, \op{Adv}^\t{o}_1(\BF{F}, B_1)}^{\C{A}}$ with $B_1$ replaced with $k M_1$.
\end{corollary}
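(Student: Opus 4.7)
The proof is essentially a substitution into Theorem~\ref{thm:first-order-to-det-zero-order}. Plugging $\delta = T^{-1}$ into the bound gives
\[
\C{R}_{\alpha, \op{Adv}^\t{o}_0(\BF{F})}^{\C{A}'}(\C{U}) \leq \C{R}_{\alpha, \op{Adv}^\t{o}_1(\hat{\BF{F}}, kM_1)}^{\C{A}}(\hat{\C{U}}) + \Bigl(3 + \tfrac{2D}{r}\Bigr) M_1,
\]
so the additive slack collapses to a $T$-independent constant. This reduces the claim to showing that the right-hand regret on the smoothed class $\hat{\BF{F}}$ over the shrunken comparator set $\hat{\C{U}}$ matches, up to absolute constants and leading order in $T$, the regret of $\C{A}$ on the unsmoothed class $\BF{F}$ with gradient bound $kM_1$ over $\C{U}$.

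For this reduction I would appeal to the fact that the generic online optimization regret bounds envisioned for $\C{A}$ depend only on the gradient-oracle bound $B_1$, the diameter of the action set, and the diameter of the comparator set. The smoothed class $\hat{\BF{F}}$ inherits the $M_1$-Lipschitz property of $\BF{F}$ (via Lemma~3 in~\cite{pedramfar23_unified_approac_maxim_contin_dr_funct}), the two-point stochastic gradient estimator constructed in Algorithm~\ref{alg:first-order-to-det-zeroth-order} is bounded in norm by $kM_1$ by the Lipschitz argument used in the proof of Theorem~\ref{thm:first-order-to-det-zero-order}, and the shrunken sets satisfy $\op{diam}(\hat{\C{K}}_\delta) \le \op{diam}(\C{K}) = D$ and $\op{diam}(\hat{\C{U}}) \le \op{diam}(\C{U})$. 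Consequently any regret guarantee of the form $O(kM_1\cdot \op{poly}(T))$ that holds for $\C{R}_{\alpha, \op{Adv}^\t{o}_1(\BF{F}, kM_1)}^{\C{A}}(\C{U})$ applies verbatim (with at most a constant-factor change) to $\C{R}_{\alpha, \op{Adv}^\t{o}_1(\hat{\BF{F}}, kM_1)}^{\C{A}}(\hat{\C{U}})$, which together with the $O(1)$ additive term yields the same order in $T$ for $\C{A}'$.

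The only subtle point, and the main obstacle, is making the informal phrase \emph{``same order of regret''} precise: the statement is really that plugging the horizon-polynomial regret guarantee of $\C{A}$ (instantiated with gradient bound $kM_1$) into the theorem produces a regret bound for $\C{A}'$ of the same order in $T$. Because the choice $\delta = T^{-1}$ renders the two-point estimator bias and the shrinkage error negligible, and because the smoothed class inherits exactly the regularity (Lipschitzness, domain diameter, gradient-oracle bound) on which the base regret bound depends, no additional estimate beyond Theorem~\ref{thm:first-order-to-det-zero-order} is required.
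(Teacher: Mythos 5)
Your proposal is correct and matches the paper's (implicit) argument: the paper states this corollary without further proof as an immediate consequence of Theorem~\ref{thm:first-order-to-det-zero-order}, exactly because setting $\delta = T^{-1}$ collapses the additive term to an $O(M_1)$ constant and the base algorithm's regret guarantee depends on the function class only through properties (Lipschitz constant, gradient-oracle bound $k M_1$, domain/comparator diameters) that the smoothed class $\hat{\BF{F}}$ over $\hat{\C{K}}_\delta$ inherits. Your explicit spelling-out of that last implicit step, including the caveat about what ``same order'' means, is exactly the intended reading.
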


\section{Proof of Theorem~\ref{thm:stoch-fi-to-sb}}\label{app:stoch-fi-to-sb}

\begin{proof}
Given a realized adversary $\C{B} \in \op{Adv}^\t{o}_i(\BF{F}, B)\{T\}$, we may define $\hat{\C{B}} = \op{Adv}^\t{o}_i(\BF{F}, B)\{T/L\}$ to be the realized adversary constructed by averaging each $T/L$ block of length $L$.
Specifically, if the functions chosen by $\C{B}$ are $f_1, \cdots, f_T$, the functions chosen by $\hat{\C{B}}$ are $\hat{f}_q := \frac{1}{L} \sum_{t = (q-1)L + 1}^{qL} f_t$ for $1 \leq q \leq T/L$.
Note that, for any $\x \in \C{K}$ and $(q-1) L < t \leq qL$, we have $\B{E}[f_t(\x)] = \hat{f}_q(\x)$ and if each $f_t$ is differentiable at $\x$, then $\B{E}[\nabla f_t(\x)] = \nabla \hat{f}_q(\x)$.
If the query oracles selected by $\C{B}$ are $\C{Q}_1, \cdots, \C{Q}_T$, then for any $1 \leq q \leq T/L$ we define the query oracle $\hat{\C{Q}}_q$ as the algorithm that first selects an integer $(q-1)L + 1 \leq t \leq qL$ with uniform probability and then returns the output of $\C{Q}_t$.
It follows that $\hat{\C{Q}}_q$ is a query oracle for $\hat{f}_q$.
It is clear from the description of Algorithm~\ref{alg:stoch-fi-to-sb} that, when the adversary is $\C{B}$, the output returned to the base algorithm corresponds to $\hat{\C{B}}$.
We have $1 \leq (a' - 1) L + 1 \leq a \leq b \leq b' L \leq T$.
Hence
\begin{align*}
\C{R}_{\alpha, \C{B}}^{\C{A}'}(\C{K}_{\star}^T)[a, b]
&= \B{E} \left[ \alpha \max_{\uu_0 \in \C{K}} \sum_{t = a}^b f_t(\uu_0) - \sum_{t = a}^b f_t(\x_t) \right] \\
&= \B{E} \left[ L \left( \alpha \max_{\uu_0 \in \C{K}} \frac{1}{L} \sum_{t = a}^b f_t(\uu_0) - \frac{1}{L}\sum_{t = a}^b f_t(\x_t) \right) \right] \\
&\leq \B{E} \left[ L \left( \alpha \max_{\uu_0 \in \C{K}} \frac{1}{L} \sum_{t = (a' - 1) L + 1}^{b' L} f_t(\uu_0) - \frac{1}{L}\sum_{t = (a' - 1) L + 1}^{b' L} f_t(\x_t) \right) \right] \\
&= \B{E} \left[ \sum_{q = a'}^{b'} \sum_{t = (q-1)L + 1}^{q L} \left( f_t(\hat{\x}_q) - f_t(\x_t) \right)
+ L \left( \alpha \max_{\uu_0 \in \C{K}} \sum_{q = a'}^{b'} \hat{f}_q(\uu_0) - \sum_{q = a'}^{b'} \hat{f}_q(\hat{\x}_q) \right) \right]
\\
&\leq \sum_{q = a'}^{b'} K M_1 D
+ L \B{E} \left[ \alpha \max_{\uu_0 \in \C{K}} \sum_{q = a'}^{b'} \hat{f}_q(\uu_0) - \sum_{q = a'}^{b'} \hat{f}_q(\hat{\x}_q) \right] \\
&\leq (b' - a' + 1) K M_1 D
+ L \C{R}_{\alpha, \hat{\C{B}}}^{\C{A}}(\C{K}_{\star}^{T/L})[a', b']
\end{align*}
Therefore
\begin{align*}
\C{R}_{\alpha, \op{Adv}^\t{o}_i(\BF{F}, B)\{T\}}^{\C{A}'}(\C{K}_{\star}^T)[a, b]
&= \sup_{\C{B} \in \op{Adv}^\t{o}_i(\BF{F}, B)\{T\}} \C{R}_{\alpha, \C{B}}^{\C{A}'}(\C{K}_{\star}^T)[a, b] \\
&\leq M_1 D K (b' - a' + 1)
+ L \sup_{\hat{\C{B}} \in \op{Adv}^\t{o}_1(\BF{F}, B)\{T/L\}} \C{R}_{\alpha, \hat{\C{B}}}^{\C{A}}(\C{K}_{\star}^{T/L})[a', b'] \\
&= M_1 D K (b' - a' + 1) + L \C{R}_{\alpha, \op{Adv}^\t{o}_i(\BF{F}, B)\{T/L\}}^{\C{A}}(\C{K}_{\star}^{T/L})[a', b'].
\qedhere
\end{align*}
\end{proof}

\begin{remark}
Note that in the above proof, we did not need to assume that the query oracles are bounded.
Specifically, what we require is that the set of query oracles to be closed under convex combinations.
This holds when all query oracles are bounded by $B$, but it also holds under many other assumptions, e.g., if we assume all query oracles variances are bounded by some $\sigma^2 > 0$.
\end{remark}

\begin{corollary}\label{cor:stoch-fi-to-sb}
Under the assumptions of Theorem~\ref{thm:stoch-fi-to-sb}, if we have
$\C{R}_{\alpha, \op{Adv}^\t{o}_i(\BF{F}, B)}^{\C{A}'}(\C{K}_{\star}^T)[a, b] = O(B T^\eta)$, $K = O(T^{\theta})$ and $L = T^{\frac{1  + \theta - \eta}{2 - \eta}}$, then we have
\begin{align*}
\C{R}_{\alpha, \op{Adv}^\t{o}_i(\BF{F}, B)}^{\C{A}'}(\C{K}_{\star}^T)[a, b]
= O\left( B T^{\frac{(1 + \theta)(1 - \eta) + \eta}{2 - \eta}} \right).
\end{align*}
As a special case, when $K = O(1)$, then we have
\begin{align*}
\C{R}_{\alpha, \op{Adv}^\t{o}_i(\BF{F}, B)}^{\C{A}'}(\C{K}_{\star}^T)[a, b]
= O\left( B T^{\frac{1}{2 - \eta}} \right).
\end{align*}
\end{corollary}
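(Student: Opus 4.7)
The proof should be a direct application of Theorem~\ref{thm:stoch-fi-to-sb} combined with an algebraic optimization of the blocking parameter $L$. Specifically, I would start from the bound
\[
\C{R}_{\alpha, \op{Adv}^\t{o}_i(\BF{F}, B)\{T\}}^{\C{A}'}(\C{K}_{\star}^T)[a, b]
\leq M_1 D K (b' - a' + 1) + L\, \C{R}_{\alpha, \op{Adv}^\t{o}_i(\BF{F}, B)\{T/L\}}^{\C{A}}(\C{K}_{\star}^{T/L})[a', b']
\]
supplied by Theorem~\ref{thm:stoch-fi-to-sb}. Using $b' - a' + 1 \leq \lceil T/L \rceil = O(T/L)$ together with $K = O(T^\theta)$, the first term is controlled by $O(T^{1+\theta}/L)$. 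Applying the hypothesis that the base algorithm $\C{A}$ achieves regret $O(B\,T^\eta)$ at horizon $T$ (hence $O(B\,(T/L)^\eta)$ at horizon $T/L$), the second term becomes $O(B\,T^\eta L^{1-\eta})$.

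Next I would balance these two terms as functions of $L$. Solving $T^{1+\theta}/L = T^\eta L^{1-\eta}$ gives $L^{2-\eta} = T^{1+\theta-\eta}$, which is precisely the prescribed choice $L = T^{(1+\theta-\eta)/(2-\eta)}$. Substituting back, a direct simplification shows that both terms have common order
\[
T^{1+\theta - \frac{1+\theta-\eta}{2-\eta}}
= T^{\frac{(1+\theta)(2-\eta) - (1+\theta-\eta)}{2-\eta}}
= T^{\frac{(1+\theta)(1-\eta) + \eta}{2-\eta}},
\]
and carrying along the factor $B$ yields the claimed bound. The special case $K = O(1)$ corresponds to $\theta = 0$, in which the exponent collapses to $\frac{(1-\eta)+\eta}{2-\eta} = \frac{1}{2-\eta}$, matching the stated conclusion.

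There is no substantive conceptual obstacle beyond Theorem~\ref{thm:stoch-fi-to-sb}; the content of the corollary is purely the optimization of the blocking length. The only subtlety worth checking is that the base algorithm's regret scales with \emph{its} horizon as $(T/L)^\eta$ rather than $T^\eta$, so that the outer factor $L$ combines with $(T/L)^\eta$ to produce the $T^\eta L^{1-\eta}$ dependence used in the balancing; once this is tracked carefully, the rest is elementary arithmetic of exponents.
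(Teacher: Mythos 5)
Your proposal is correct and follows essentially the same route as the paper: apply Theorem~\ref{thm:stoch-fi-to-sb}, bound $b'-a'+1$ by $O(T/L)$, use the base algorithm's regret $O(B(T/L)^\eta)$ at horizon $T/L$, and substitute the stated $L$ (the paper simply plugs in this $L$ rather than deriving it by balancing, but the computation is identical). The exponent arithmetic and the $\theta=0$ special case both check out.
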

\begin{proof}
We have
\begin{align*}
\C{R}_{\alpha, \op{Adv}^\t{o}_i(\BF{F}, B)\{T\}}^{\C{A}'}(\C{K}_{\star}^T)[a, b]
&\leq M_1 D K (b' - a' + 1) + L \C{R}_{\alpha, \op{Adv}^\t{o}_i(\BF{F}, B)\{T/L\}}^{\C{A}}(\C{K}_{\star}^{T/L})[a', b'] \\
&\leq M_1 D K (T/L) + L \C{R}_{\alpha, \op{Adv}^\t{o}_i(\BF{F}, B)\{T/L\}}^{\C{A}}(\C{K}_{\star}^{T/L})[a', b'] \\
&= O(K T / L) + O(L B (T/L)^\eta) \\
&= O\left( B T^{\frac{(1 + \theta)(1 - \eta) + \eta}{2 - \eta}} \right).
\qedhere
\end{align*}

\end{proof}

\section{Proof of Theorem~\ref{thm:online-to-offline}}\label{app:online-to-offline}

\begin{proof}
Since $\C{A}$ requires $T^{\theta}$ queries per time-step, it requires a total of $T^{1 + \theta}$ queries.
The expected error is bounded by the regret divided by time.
Hence we have $\epsilon = O(T^{\eta - 1})$ after $T^{1 + \theta}$ queries.
Therefore, the total number of queries to keep the error bounded by $\epsilon$ is $O(\epsilon^{-\frac{1 + \theta}{1 - \eta}})$.
\end{proof}

\section{Projection-free adptive regret}\label{sec:w-ada-reg}\label{app:w-ada-reg}

The $\mathtt{SO\textrm{-}OGD}$ algorithm in~\cite{garber22_new_projec_algor_onlin_convex} is a deterministic algorithm with semi-bandit feedback, designed for online convex optimization with a deterministic gradient oracle.
Here \textit{we assume that the separation oracle is deterministic.}

Here we use the notation $\BF{c}$, $r$ and $\hat{\C{K}}_\delta$ described in Section~\ref{sec:meta}.

\begin{algorithm2e}[H]
\SetKwInOut{Input}{Input}\DontPrintSemicolon\LinesNumbered
\caption{Online Gradient Ascent via a Separation Oracle - $\mathtt{SO\textrm{-}OGA}$}
\label{alg:SO-OGA}
\small
\Input{ horizon $T$, constraint set $\C{K}$, step size $\eta$ }
$\x_1 \gets \BF{c} \in \hat{\C{K}}_\delta$ \;
\For{$t = 1, 2, \dots, T$}{
Play $\x_t$ and observe $\oo_t = \nabla f_t(\x_t)$ \;
$\x'_{t+1} = \x_t + \eta \oo_t$ \;
Set $\x_{t+1} = \mathtt{SO\textrm{-}IP}_{\C{K}}(\x'_{t+1})$, the output of Algorithm~\ref{alg:SO-IP} with initial point $\x'_{t+1}$ \;
}
\end{algorithm2e}

Note that here we use a maximization version of the algorithm, which we denote by $\mathtt{SO\textrm{-}OGA}$.
Here $\BF{P}_{\C{K}}$ denotes projection into the convex set $\C{K}$.
The original version, which is designed for minimization, uses the update rule $\x_{t+1}' = \x_t - \eta \oo_t$ in Algorithm~\ref{alg:SO-OGA} instead.

\begin{algorithm2e}[H]
\SetKwInOut{Input}{Input}\DontPrintSemicolon\LinesNumbered
\caption{Infeasible Projection via a Separation Oracle - $\mathtt{SO\textrm{-}IP}_{\C{K}}(\y_0)$}
\label{alg:SO-IP}
\small
\Input{ Constraint set $\C{K}$, shrinking parameter $\delta < r$, initial point $\y_0$ }
$\y_1 \gets \BF{P}_{\op{aff}(\C{K})}(\y_0)$ \;
$\y_2 \gets \BF{c} + \frac{\y_1 - \BF{c}}{\max\{1, \|\y_1\|/D \}}$
\tcc*{$\y_1$ is projection of $\y_0$ over $\B{B}_{D}(\BF{c}) \cap \op{aff}(\C{K})$}
\For{$i = 1, 2, \dots$}{
Call $\op{SO}_{\C{K}}$ with input $\y_i$ \;
\eIf{$\y_i \notin \C{K}$}{
Set $\g_i$ to be the hyperplane returned by $\op{SO}_{\C{K}}$
\tcc*{$\forall \x \in \C{K}, \quad \bra \y_i - \x, \g_i \ket > 0$}
$\g'_i \gets \BF{P}_{\op{aff}(\C{K}) - \BF{c}}(\g_i)$ \;
Update $\y_{i+1} \gets \y_i - \delta \frac{\g'_i}{\|\g'_i\|} $ \;
}{
Return $\y \gets \y_i$ \;
}
}
\end{algorithm2e}

\begin{lemma}\label{lem:w-ada-reg:infeasible-projection}
Algorithm~\ref{alg:SO-IP} stops after at most $(\op{dist}(\y_0, \hat{\C{K}}_\delta)^2 - \op{dist}(\y, \hat{\C{K}}_\delta)^2)/\delta^{2} + 1$ iterations and returns $\y \in \C{K}$ such that $\forall \z \in \hat{\C{K}}_\delta$, we have $\| \y - \z \| \leq \| \y_0 - \z \|$.
\end{lemma}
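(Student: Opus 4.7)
My plan is to fix an arbitrary target point $\z \in \hat{\C{K}}_\delta$ and track the squared distance $\|\y_i - \z\|^2$ through all iterations, showing that it is nonincreasing throughout the two preparatory projections and strictly decreases by at least $\delta^2$ per iteration of the main loop; then setting $\z$ to the projection of $\y_0$ onto $\hat{\C{K}}_\delta$ will yield both the iteration bound and the monotonicity claim.

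The key input is the following geometric property of the shrunk set: for every $\z \in \hat{\C{K}}_\delta$ one has $\B{B}_\delta(\z) \cap \op{aff}(\C{K}) \subseteq \C{K}$. To see this I would write $\z = (1 - \delta/r)\x + (\delta/r)\BF{c}$ for some $\x \in \C{K}$ and, for any $\vv \in \op{aff}(\C{K}) - \z$ with $\|\vv\| \leq \delta$, express $\z + \vv$ as the convex combination $(1 - \delta/r)\x + (\delta/r)(\BF{c} + (r/\delta)\vv)$, where $\BF{c} + (r/\delta)\vv$ lies in $\op{aff}(\C{K}) \cap \B{B}_r(\BF{c}) \subseteq \C{K}$ by the choice of $r$. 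This property will let me certify, in the loop, that $\z + \delta \g'_i/\|\g'_i\| \in \C{K}$ since $\g'_i/\|\g'_i\|$ is a unit vector in the direction space $\op{aff}(\C{K}) - \BF{c}$.

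Next I would handle the preparatory steps. Since $\hat{\C{K}}_\delta \subseteq \C{K} \subseteq \op{aff}(\C{K}) \cap \B{B}_D(\BF{c})$, the maps $\y_0 \mapsto \y_1$ and $\y_1 \mapsto \y_2$ are orthogonal projections onto convex supersets of $\hat{\C{K}}_\delta$, hence each is nonexpansive with respect to $\z$: $\|\y_2 - \z\| \leq \|\y_1 - \z\| \leq \|\y_0 - \z\|$. Moreover $\y_2 \in \op{aff}(\C{K})$, and the inductive invariant $\y_i \in \op{aff}(\C{K})$ is preserved by the loop update because $\g'_i \in \op{aff}(\C{K}) - \BF{c}$ by construction; in particular $\y_i - \z \in \op{aff}(\C{K}) - \BF{c}$ as well. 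For the loop itself, when $\y_i \notin \C{K}$ the separation oracle returns $\g_i$ with $\bra \y_i - \x, \g_i \ket > 0$ for all $\x \in \C{K}$. Applying this inequality to the feasible point $\x = \z + \delta \g'_i/\|\g'_i\|$, combined with $\bra \g'_i, \g_i \ket = \|\g'_i\|^2$ and $\bra \y_i - \z, \g_i \ket = \bra \y_i - \z, \g'_i \ket$ (which follows from $\y_i - \z$ being orthogonal to $\g_i - \g'_i$), yields
\begin{align*}
\bra \y_i - \z, \g'_i/\|\g'_i\| \ket > \delta.
\end{align*}
Expanding $\|\y_{i+1} - \z\|^2 = \|\y_i - \z\|^2 - 2\delta \bra \y_i - \z, \g'_i/\|\g'_i\| \ket + \delta^2$ then gives $\|\y_{i+1} - \z\|^2 < \|\y_i - \z\|^2 - \delta^2$.

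To conclude, I would choose $\z$ to be a nearest point to $\y_0$ in $\hat{\C{K}}_\delta$, so $\|\y_0 - \z\|^2 = \op{dist}(\y_0, \hat{\C{K}}_\delta)^2$, and bound the total decrease across the loop by $\|\y_2 - \z\|^2 - \|\y - \z\|^2 \leq \op{dist}(\y_0, \hat{\C{K}}_\delta)^2 - \op{dist}(\y, \hat{\C{K}}_\delta)^2$, which together with a $+1$ to account for the final (terminating) call to the oracle yields the stated iteration bound. The nonexpansiveness claim $\|\y - \z\| \leq \|\y_0 - \z\|$ for \emph{every} $\z \in \hat{\C{K}}_\delta$ then follows by applying the same chain of inequalities to that $\z$. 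The main subtlety I anticipate is getting the sign and the reference point right when invoking the separation oracle: one must displace $\z$ in the direction of $\g'_i$ (so that $\y_i$ is actually separated from this displaced feasible point), and the step in the algorithm moves $\y_i$ in the opposite direction, which is exactly what makes the progress positive.
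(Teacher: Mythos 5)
Your proposal is correct, but it takes a genuinely different route from the paper. The paper proves the lemma by reduction: after translating so that $\BF{c}=\BF{0}$, it defines a modified separation oracle that returns $\BF{P}_{\op{aff}(\C{K})}(\g)$, checks that this is still a valid separation oracle for points of $\op{aff}(\C{K})$, observes that Algorithm~\ref{alg:SO-IP} started from $\y_1$ is then exactly an instance of Algorithm~6 of \cite{garber22_new_projec_algor_onlin_convex}, and invokes their Lemma~13 as a black box, finally transferring the bound from $\y_1$ to $\y_0$ via the Pythagorean identity $\op{dist}(\y_1,\hat{\C{K}}_\delta)^2=\op{dist}(\y_0,\hat{\C{K}}_\delta)^2-\|\y_0-\y_1\|^2$. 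You instead re-derive the content of that cited lemma from scratch in the sliced setting: you isolate the key geometric fact $\B{B}_\delta(\z)\cap\op{aff}(\C{K})\subseteq\C{K}$ for every $\z\in\hat{\C{K}}_\delta$, and run a potential argument on $\|\y_i-\z\|^2$, using the separation inequality at the displaced feasible point $\z+\delta\g'_i/\|\g'_i\|$ together with the orthogonality relations $\bra\g'_i,\g_i\ket=\|\g'_i\|^2$ and $\bra\y_i-\z,\g_i\ket=\bra\y_i-\z,\g'_i\ket$ to get a per-iteration decrease of more than $\delta^2$; your nonexpansiveness treatment of the two preparatory projections plays the role of the paper's Pythagorean step. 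The paper's approach buys brevity by leaning on the literature; yours buys self-containment and makes explicit exactly why the shrunk set $\hat{\C{K}}_\delta$ and the projected normals $\g'_i$ make the argument go through inside $\op{aff}(\C{K})$, which the paper handles implicitly through the modified oracle. One small point worth adding for completeness: your displacement step presupposes $\g'_i\neq\BF{0}$; this is automatic, since if $\g'_i=\BF{0}$ then for any $\z\in\hat{\C{K}}_\delta\subseteq\C{K}$ one would have $\bra\y_i-\z,\g_i\ket=\bra\y_i-\z,\g'_i\ket=0$ (as $\y_i-\z$ lies in the linear space $\op{aff}(\C{K})-\BF{c}$), contradicting the separation property.
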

\begin{proof}
We first note that this algorithm is invariant under translations.
Hence it is sufficient to prove the result when $\BF{c} = 0$.

Let $\op{SO}'_\C{K}$ denote the following separation oracle.
If $\y \in \C{K}$ or $\y \notin \op{aff}(\C{K})$, then $\op{SO}'_\C{K}$ returns the same output as $\op{SO}_\C{K}$.
Otherwise, it returns $\BF{P}_{\op{aff}(\C{K})}(\g)$ where $\g \in \B{R}^d$ is the output of $\op{SO}_\C{K}$.
To prove that this is indeed a separation oracle, we only need to consider the case where $\y \in \op{aff}(\C{K}) \setminus \C{K}$.
We know that $\g$ is a vector such that
\[
\forall \x \in \C{K}, \quad \bra \y - \x, \g \ket > 0.
\]
Since $\BF{P}_{\op{aff}(\C{K})}$ is an orthogonal projection, we have
\[
\bra \y - \x, \BF{P}_{\op{aff}(\C{K})}(\g) \ket
= \bra \BF{P}_{\op{aff}(\C{K})}(\y - \x), \g \ket
= \bra \y - \x, \g \ket
> 0.
\]
for all $\x \in \C{K}$, which implies that $\op{SO}'_\C{K}$ is a separation oracle.

Now we see that Algorithm~\ref{alg:SO-IP} is an instance of Algorithm~6 in~\cite{garber22_new_projec_algor_onlin_convex} applied to the initial point  $\y_1$ using the separation oracle $\op{SO}'_\C{K}$.
Hence we may use Lemma~13 in~\cite{garber22_new_projec_algor_onlin_convex} directly to see that Algorithm~\ref{alg:SO-IP} stops after at most $(\op{dist}(\y_1, \hat{\C{K}}_\delta)^2 - \op{dist}(\y, \hat{\C{K}}_\delta)^2)/\delta^{2} + 1$ iterations and returns $\y \in \C{K}$ such that $\forall \z \in \hat{\C{K}}_\delta$, we have $\| \y - \z \| \leq \| \y_1 - \z \|$
Since $\y_1$ is the projection of $\y$ over $\op{aff}(\C{K})$, we see that Algorithm~\ref{alg:SO-IP} stops after at most
\begin{align*}
\frac{\op{dist}(\y_1, \hat{\C{K}}_\delta)^2 - \op{dist}(\y, \hat{\C{K}}_\delta)^2}{\delta^{2}} + 1
&= \frac{\op{dist}(\y_0, \hat{\C{K}}_\delta)^2 - \op{dist}(\y, \hat{\C{K}}_\delta)^2}{\delta^{2}}
- \frac{\|\y_0 - \y_1\|^2}{\delta^{2}} + 1 \\
&\leq \frac{\op{dist}(\y_0, \hat{\C{K}}_\delta)^2 - \op{dist}(\y, \hat{\C{K}}_\delta)^2}{\delta^{2}} + 1
\end{align*}
steps and
\begin{align*}
\forall \z \in \hat{\C{K}}_\delta \subseteq \op{aff}(\C{K})
,\quad
\| \y - \z \| &\leq \| \y_1 - \z \| \leq \| \y_0 - \z \|.
\qedhere
\end{align*}
\end{proof}

In the following, we use the notation
\[
\C{AR}^{\C{A}}_{\alpha, \op{Adv}} := \max_{1 \leq a \leq b \leq T} \C{R}^{\C{A}}_{\alpha, \op{Adv}} (\C{K}_*^T) [a, b],
\]
to denote the adaptive regret.

\begin{theorem}\label{thm:w-ada-reg:orignial}
Let $\BF{L}$ be a class of linear functions over $\C{K}$ such that $\|l\| \leq M_1$ for all $l \in \BF{L}$ and let $D = \op{diam}(\C{K})$.
Fix $v > 0$ such that $\delta = v T^{-1/2} \in (0, 1)$ and set $\eta = \frac{v r}{2 M_1} T^{-1/2}$.
Then we have
\begin{align*}
\C{AR}_{1, \op{Adv}_1^{\t{f}}(\BF{L})}^\mathtt{SO\textrm{-}OGA}
    &= O( M_1 T^{1/2} ).
\end{align*}
\end{theorem}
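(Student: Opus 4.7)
The plan is to run the standard online gradient ascent analysis, treating the infeasible projection as if it were a genuine projection onto $\hat{\C{K}}_\delta$, and then handle the gap between $\hat{\C{K}}_\delta$ and $\C{K}$ using Lipschitzness of linear functions. The crucial enabling fact is Lemma~\ref{lem:w-ada-reg:infeasible-projection}, which tells us that although $\x_{t+1} := \mathtt{SO\textrm{-}IP}_{\C{K}}(\x'_{t+1})$ lies in $\C{K}$ (not $\hat{\C{K}}_\delta$), it is nevertheless a nonexpansion toward every point of $\hat{\C{K}}_\delta$: $\|\x_{t+1} - \z\| \leq \|\x'_{t+1} - \z\|$ for all $\z \in \hat{\C{K}}_\delta$.

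Fix an interval $[a,b] \subseteq [1,T]$ and a comparator $\uu \in \C{K}$. Define the shrunk comparator $\z := (1-\delta/r)\uu + (\delta/r)\BF{c} \in \hat{\C{K}}_\delta$, which satisfies $\|\z - \uu\| \leq (\delta/r) D$. Writing $f_t(\x) = \langle l_t, \x\rangle$ with $\|l_t\| \leq M_1$, so that $\oo_t = l_t$, the update $\x'_{t+1} = \x_t + \eta l_t$ together with the nonexpansion property yields the standard one-step inequality
\begin{equation*}
\|\x_{t+1} - \z\|^2 \leq \|\x_t - \z\|^2 + 2\eta \langle l_t, \x_t - \z\rangle + \eta^2 M_1^2.
\end{equation*}
Rearranging and telescoping over $t = a, \dots, b$ gives
\begin{equation*}
\sum_{t=a}^{b} \langle l_t, \z - \x_t\rangle \leq \frac{\|\x_a - \z\|^2}{2\eta} + \frac{(b-a+1)\eta M_1^2}{2} \leq \frac{D^2}{2\eta} + \frac{T\eta M_1^2}{2},
\end{equation*}
where I use that $\x_a, \z \in \C{K}$ so $\|\x_a - \z\| \leq D$. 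Adding $\sum_{t=a}^b (f_t(\uu) - f_t(\z)) \leq T M_1 \delta D / r$ by Lipschitzness bounds the regret on $[a,b]$ against $\uu$ by
\begin{equation*}
\frac{D^2}{2\eta} + \frac{T \eta M_1^2}{2} + \frac{T M_1 \delta D}{r}.
\end{equation*}

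Plugging in $\delta = v T^{-1/2}$ and $\eta = \frac{vr}{2M_1}T^{-1/2}$, each of the three terms is of order $M_1 T^{1/2}$ (the first is $\frac{D^2 M_1}{vr} T^{1/2}$, the second is $\frac{vr M_1}{4} T^{1/2}$, the third is $\frac{v D M_1}{r} T^{1/2}$). Since the right-hand side is independent of $(a,b,\uu)$, taking the supremum over $\uu \in \C{K}$ and intervals $[a,b]$ yields $\C{AR}^{\mathtt{SO\textrm{-}OGA}}_{1, \op{Adv}_1^\t{f}(\BF{L})} = O(M_1 T^{1/2})$.

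The only subtle step is the first one: the comparator inside the telescoping argument must live in $\hat{\C{K}}_\delta$ for Lemma~\ref{lem:w-ada-reg:infeasible-projection} to give nonexpansion, whereas the adaptive regret is defined against an arbitrary $\uu \in \C{K}$. The shrinking device $\z = (1-\delta/r)\uu + (\delta/r)\BF{c}$ together with the choice $\delta = \Theta(T^{-1/2})$ resolves this, since the Lipschitz penalty $TM_1 \delta D/r$ then matches the usual $\sqrt{T}$ OGA regret rather than dominating it. Everything else is the textbook gradient-descent analysis, and since the per-interval bound has no $(a,b)$-dependence, adaptive regret follows with no extra work.
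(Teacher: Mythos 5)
Your proposal is correct, and it proves the theorem by a genuinely different route than the paper. The paper's proof is citation-based: it first reduces the fully adaptive adversary $\op{Adv}_1^{\t{f}}(\BF{L})$ to an oblivious one using the fact that $\mathtt{SO\textrm{-}OGA}$ is deterministic (Theorem~1 of \cite{pedramfar24_unified_framew_analy_meta_onlin_convex_optim}), translates so that $\BF{c}=\BF{0}$, and then invokes Theorem~14 of \cite{garber22_new_projec_algor_onlin_convex} directly, observing that the only place the assumption $\B{B}_r(\BF{0})\subseteq\C{K}$ enters their argument is their Lemma~13, which is replaced by Lemma~\ref{lem:w-ada-reg:infeasible-projection}. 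You instead re-derive the whole bound from scratch: treat the output of $\mathtt{SO\textrm{-}IP}$ as a nonexpansion toward every point of $\hat{\C{K}}_\delta$ (exactly what Lemma~\ref{lem:w-ada-reg:infeasible-projection} provides), run the textbook one-step OGA inequality against the shrunk comparator $\z=(1-\delta/r)\uu+(\delta/r)\BF{c}$, telescope over an arbitrary interval $[a,b]$ using $\|\x_a-\z\|\leq D$, and pay the Lipschitz penalty $TM_1\delta D/r$ for moving the comparator, which with $\delta=vT^{-1/2}$ and $\eta=\frac{vr}{2M_1}T^{-1/2}$ balances all three terms at $O(M_1T^{1/2})$. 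Your route buys self-containedness, explicit constants, and a pathwise bound that covers the fully adaptive adversary without the black-box reduction step (since every inequality uses only $\|l_t\|\leq M_1$ for the realized gradients, however they were chosen); the paper's route is shorter and delegates the interval-uniform analysis to Garber--Kretzu, at the cost of relying on how their proof is structured internally. Both hinge on the same key ingredient, Lemma~\ref{lem:w-ada-reg:infeasible-projection}, and your handling of the $\C{K}$-versus-$\hat{\C{K}}_\delta$ gap is the same shrinking device the paper uses elsewhere (e.g., in Theorem~\ref{thm:first-order-to-zero-order}).
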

\begin{proof}
Since the algorithm is deterministic, according to Theorem~1 in~\cite{pedramfar24_unified_framew_analy_meta_onlin_convex_optim}, it is sufficient to prove this regret bound against the oblivious adversary $\op{Adv}_1^\t{o}(\BF{L})$.

Note that this algorithm is invariant under translations.
Hence it is sufficient to prove the result when $\BF{c} = 0$.
If $\op{aff}(\C{K}) = \B{R}^d$, then we have $\B{B}_r(\BF{0}) \subseteq \C{K} \subseteq \B{B}_R(\BF{0})$ and we may use Theorem~14 from~\cite{garber22_new_projec_algor_onlin_convex} to obtain the desired result for the oblivious adversary $\op{Adv}_1^\t{o}(\BF{L})$.
On the other hand, the assumption $\B{B}_r(\BF{0}) \subseteq \C{K}$ is only used in the proof of Lemma~13 in~\cite{garber22_new_projec_algor_onlin_convex}.
Here we use Lemma~\ref{lem:w-ada-reg:infeasible-projection} instead which does not require this assumption.
\end{proof}

The following corollary is an immediate consequence of the above theorem and Theorems~\ref{thm:dr_mono_general:main},~\ref{thm:dr_mono_zero:main},~\ref{thm:dr_nonmono:main},~\ref{thm:online-to-offline} and Corollaries~\ref{cor:first-order-to-zero-order},~\ref{cor:first-order-to-det-zero-order} and~\ref{cor:stoch-fi-to-sb}.

\begin{corollary}\label{cor:w-ada-reg}
Let $\mathtt{SO\textrm{-}OGA}$ denote the algorithm described above.
Then the following are true.
\begin{enumerate}[label=\alph*)]
\item Under the assumptions of Theorem~\ref{thm:dr_mono_general:main}, we have:
\begin{align*}
\C{AR}_{\frac{\gamma^2}{1 + c\gamma^2}, \op{Adv}_1^{\t{f}}(\BF{F})}^\mathtt{SO\textrm{-}OGA}
    &\leq O(M_1 T^{1/2}), \\
\C{AR}_{\frac{\gamma^2}{1 + c\gamma^2}, \op{Adv}_1^\t{o}(\BF{F}, B_1)}^\mathtt{SO\textrm{-}OGA}
    &\leq O(B_1 T^{1/2}), \\
\C{AR}_{\frac{\gamma^2}{1 + c\gamma^2}, \op{Adv}_0^\t{o}(\BF{F})}^{\mathtt{FOTZO}(\mathtt{SO\textrm{-}OGA})}
    &\leq O(M_1 T^{1/2}).
\end{align*}
If we also assume $\BF{F}$ is bounded by $M_0$ and $B_0 \geq M_0$, then
\begin{align*}
\C{AR}_{\frac{\gamma^2}{1 + c\gamma^2}, \op{Adv}_0^\t{o}(\BF{F}, B_0)}^{\mathtt{STB}(\mathtt{SO\textrm{-}OGA})}
    &\leq O( B_0 T^{3/4} ).
\end{align*}
\item Under the assumptions of Theorem~\ref{thm:dr_mono_zero:main}, we have:
\begin{align*}
\C{AR}_{1 - e^{-\gamma}, \op{Adv}_1^\t{o}(\BF{F}, B_1)}^{ \C{A} }
    &\leq O( B_1 T^{1/2} ), \\
\C{AR}_{1 - e^{-\gamma}, \op{Adv}_0^\t{o}(\BF{F})}^{ \mathtt{FOTZO}(\C{A}) }
    &\leq O( M_1 T^{1/2} ),
\end{align*}
where $\C{A} = \mathtt{OMBQ}(\mathtt{SO\textrm{-}OGA}, \mathtt{BQM0}, \op{Id})$.
Note that $\C{A}$ is a first order full-information algorithm that requires a single query per time-step.
If we also assume $\BF{F}$ is bounded by $M_0$ and $B_0 \geq M_0$, then
\begin{align*}
\C{AR}_{1 - e^{-\gamma}, \op{Adv}_1^\t{o}(\BF{F}, B_1)}^{ \C{A}_{\op{semi-bandit}} }
    &\leq O( B_1 T^{2/3} ), \\
\C{AR}_{1 - e^{-\gamma}, \op{Adv}_0^\t{o}(\BF{F}, B_0)}^{ \C{A}_{\op{full-info--0}} }
    &\leq O( B_0 T^{3/4} ) \\
\C{AR}_{1 - e^{-\gamma}, \op{Adv}_0^\t{o}(\BF{F}, B_0)}^{ \C{A}_{\op{bandit}} }
    &\leq O( B_0 T^{4/5} )
\end{align*}
where
\begin{align*}
\C{A}_{\op{semi-bandit}} &= \mathtt{SFTT}(\C{A})
,\quad
\C{A}_{\op{full-info--0}} = \mathtt{FOTZO}(\C{A})
,\quad
\C{A}_{\op{bandit}}   = \mathtt{SFTT}(\C{A}_{\op{full-info--0}}).
\end{align*}

\item Under the assumptions of Theorem~\ref{thm:dr_nonmono:main}, we have:
\begin{align*}
\C{AR}_{\frac{1 - h}{4}, \op{Adv}_1^\t{o}(\BF{F}, B_1)}^{\C{A}}
  &\leq O( B_1 T^{1/2} ), \\
\C{AR}_{\frac{1 - h}{4}, \op{Adv}_0^\t{o}(\BF{F})}^{\mathtt{FOTZO}(\C{A})}
    &\leq O( M_1 T^{1/2} ),
\end{align*}
where $\C{A} = \mathtt{OMBQ}(\mathtt{SO\textrm{-}OGA}, \mathtt{BQN}, \x \mapsto \frac{\x_t + \underline{\x}}{2})$.
Note that $\C{A}$ is a first order full-information algorithm that requires a single query per time-step.
If we also assume $\BF{F}$ is bounded by $M_0$ and $B_0 \geq M_0$, then
\begin{align*}
\C{AR}_{\frac{1 - h}{4}, \op{Adv}_1^\t{o}(\BF{F}, B_1)}^{ \C{A}_{\op{semi-bandit}} }
    &\leq O( B_1 d^{1/2} T^{2/3} ),\\
\C{AR}_{\frac{1 - h}{4}, \op{Adv}_0^\t{o}(\BF{F}, B_0)}^{\C{A}_{\op{full-info--0}}}
    &\leq O( B_0 d^{1/2} T^{3/4} ),\\
\C{AR}_{\frac{1 - h}{4}, \op{Adv}_0^\t{o}(\BF{F}, B_0)}^{ \C{A}_{\op{bandit}} }
    &\leq O( B_0 d^{1/2} T^{4/5} ),
\end{align*}
where
\begin{align*}
\C{A}_{\op{semi-bandit}} &= \mathtt{SFTT}(\C{A})
,\quad
\C{A}_{\op{full-info--0}} = \mathtt{FOTZO}(\C{A})
,\quad
\C{A}_{\op{bandit}}   = \mathtt{SFTT}(\C{A}_{\op{full-info--0}}).
\end{align*}
\end{enumerate}
\end{corollary}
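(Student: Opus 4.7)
The plan is a systematic composition. Start from Theorem~\ref{thm:w-ada-reg:orignial}, which gives $\mathtt{SO\textrm{-}OGA}$ an adaptive regret bound $O(M_1 T^{1/2})$ on the linear class $\BF{L}$, and propagate this bound through the quadratization theorems (\ref{thm:dr_mono_general:main}, \ref{thm:dr_mono_zero:main}, \ref{thm:dr_nonmono:main}) and the feedback-conversion meta-algorithms of Section~\ref{sec:meta}. A preliminary step is to extend Theorem~\ref{thm:w-ada-reg:orignial} from $\BF{L}$ to $\BF{Q}_\mu[B_1]$: when $\mu = 0$ each element of $\BF{Q}_0[B_1]$ equals a linear function up to a per-function additive constant that drops out of the regret, so the result is immediate; when $\mu > 0$ each $q \in \BF{Q}_\mu[B_1]$ is concave with gradient norm at most $B_1 + \mu D$ on $\C{K}$, and since the analysis of $\mathtt{SO\textrm{-}OGA}$ only uses concavity and bounded gradients, the same $O(T^{1/2})$ adaptive-regret bound carries over with constants absorbed.

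For part (a), invoke Theorem~\ref{thm:dr_mono_general:main} directly: the inequality $\C{R}_\alpha^{\mathtt{SO\textrm{-}OGA}} \leq \tfrac{\gamma}{1 + c\gamma^2}\,\C{R}_1^{\mathtt{SO\textrm{-}OGA}}$ holds interval-by-interval because the per-interval version of Theorem~\ref{thm:main} is already how it is proved, so the adaptive form lifts immediately and yields $O(M_1 T^{1/2})$ against the deterministic first-order adversary and $O(B_1 T^{1/2})$ against the stochastic one. The deterministic zeroth-order bound $O(M_1 T^{1/2})$ then follows from $\mathtt{FOTZO\textrm{-}2P}$ with $\delta = T^{-1}$ (Corollary~\ref{cor:first-order-to-det-zero-order}), since the overhead $\delta M_1 T$ is $O(M_1)$. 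The stochastic bandit bound $O(B_0 T^{3/4})$ follows from $\mathtt{STB}$ with the optimal $\delta$ in Corollary~\ref{cor:first-order-to-zero-order}, where the input exponent $\eta = 1/2$ gives output exponent $(1+1/2)/2 = 3/4$.

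For parts (b) and (c), the same template applies after first wrapping $\mathtt{SO\textrm{-}OGA}$ in $\mathtt{OMBQ}$ with the boosted oracle $\mathtt{BQM0}$ (resp.\ $\mathtt{BQN}$ and the centering $\x \mapsto (\x + \underline{\x})/2$) to produce a first-order full-information algorithm of adaptive regret $O(B_1 T^{1/2})$, via Theorem~\ref{thm:dr_mono_zero:main} (resp.\ Theorem~\ref{thm:dr_nonmono:main}). The remaining feedback regimes are reached by chaining: $\mathtt{SFTT}$ converts first-order full information into first-order semi-bandit via Corollary~\ref{cor:stoch-fi-to-sb}, where $K = O(1)$ and $\eta = 1/2$ give exponent $1/(2 - 1/2) = 2/3$; the two variants of $\mathtt{FOTZO}$ convert first-order full information into deterministic and stochastic zeroth-order full information with exponents $1/2$ and $(1 + 1/2)/2 = 3/4$ respectively; and $\mathtt{SFTT} \circ \mathtt{FOTZO}$ produces the zeroth-order bandit bound with exponent $1/(2 - 3/4) = 4/5$. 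The dimension factor $d^{1/2}$ in (c) enters through the norm bound $k \leq d$ on the single-point gradient estimator used inside $\mathtt{FOTZO}$ and $\mathtt{STB}$.

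The main obstacle, and essentially the only non-mechanical part, is verifying that each meta-algorithm preserves the adaptive-regret guarantee rather than only the static $[1,T]$ bound. The proofs of Theorems~\ref{thm:main}, \ref{thm:first-order-to-zero-order}, \ref{thm:first-order-to-det-zero-order}, and~\ref{thm:stoch-fi-to-sb} are already stated with a generic comparator set $\C{U}$ and interval $[a,b]$, so this reduces to careful bookkeeping across the twelve sub-bounds; the sheer number of branches ($\text{three function classes}\times\text{several feedback types}$) makes this the most error-prone part of the proof.
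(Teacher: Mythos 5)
Your proposal follows essentially the same route as the paper, which derives Corollary~\ref{cor:w-ada-reg} as an immediate consequence of Theorem~\ref{thm:w-ada-reg:orignial} combined with Theorems~\ref{thm:dr_mono_general:main},~\ref{thm:dr_mono_zero:main},~\ref{thm:dr_nonmono:main} and Corollaries~\ref{cor:first-order-to-zero-order},~\ref{cor:first-order-to-det-zero-order} and~\ref{cor:stoch-fi-to-sb}, and your exponent bookkeeping ($1/2 \to 2/3, 3/4, 4/5$) matches. Your explicit justification for passing from the linear class $\BF{L}$ to $\BF{Q}_\mu[B_1]$ (constants dropping out when $\mu=0$, concavity plus bounded gradients when $\mu>0$) is a point the paper leaves implicit, and it is a correct and welcome addition rather than a deviation.
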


\section{Dynamic regret}\label{app:d-reg}

Improved Ader ($\mathtt{IA}$) algorithm~\cite{zhang18_adapt_onlin_learn_dynam_envir} is a deterministic algorithm with semi-bandit feedback, designed for online convex optimization with a deterministic gradient oracle.

\begin{algorithm2e}[H]
\SetKwInOut{Input}{Input}\DontPrintSemicolon
\caption{Improved Ader - $\mathtt{IA}$}
\label{alg:IA}
\small
\Input{ horizon $T$, constraint set $\C{K}$, step size $\lambda$, a set $\C{H}$ containing step sizes for experts }
Activate a set of experts $\{ E^\eta \mid \eta \in \C{H} \}$ by invoking Algorithm~\ref{alg:IA:expert} for each step size $\eta \in \C{H}$ \;
Sort step sizes in ascending order $\eta_1 \leq \cdots \leq \eta_N$, and set $w_1^{\eta_i} = \frac{C}{i(i+1)}$ where $C = 1 + \frac{1}{|\C{H}|}$\;
\For{$t = 1, 2, \dots, T$}{
Receive $\x_t^\eta$ from each expert $E^\eta$ \;
Play the action $\x_t = \sum_{\eta \in \C{H}} w_t^\eta \x_t^\eta$ and observe $\oo_t = \nabla f_t(\x_t)$ \;
Define $l_t(\y) := \bra \oo_t, \y - \x_t \ket$ \;
Update the weight of each expert by
$w_{t+1}^\eta = \frac{w_t^\eta e^{-\lambda l_t(\x_t^\eta)}}{\sum_{\mu \in \C{H}} w_t^\mu e^{-\lambda l_t(\x_t^\mu)}}$.\;
Send the gradient $\oo_t$ to each expert $E^\eta$ \;
}
\end{algorithm2e}

\begin{algorithm2e}[H]
\SetKwInOut{Input}{Input}\DontPrintSemicolon
\caption{Improved Ader : Expert algorithm}
\label{alg:IA:expert}
\small
\Input{ horizon $T$, constraint set $\C{K}$, step size $\eta$ }
Let $\x_1^\eta$ be any point in $\C{K}$ \;
\For{$t = 1, 2, \dots, T$}{
Send $\x_t^\eta$ to the main algorithm \;
Receive $\oo_t$ from the main algorithm \;
$\x_{t+1}^\eta = \BF{P}_{\C{K}}(\x_t^\eta + \eta \oo_t)$ \;
}
\end{algorithm2e}
In Algorithm~\ref{alg:IA:expert}, $\BF{P}_{\C{K}}$ denotes projection into the convex set $\C{K}$.
Note that here we used the maximization version of this algorithm.
The original version, which is designed for minimization, uses the update rule $\x_{t+1}^\eta = \BF{P}_{\C{K}}(\x_t^\eta - \eta \oo_t)$ in Algorithm~\ref{alg:IA:expert} instead.

\begin{theorem}\label{thm:orignial-d-reg}
Let $\BF{L}$ be a class of linear functions over $\C{K}$ such that $\|l\| \leq M_1$ for all $l \in \BF{L}$ and let $D = \op{diam}(\C{K})$.
Set $\C{H} := \{ \eta_i = \frac{2^{i-1} D}{M_1} \sqrt{\frac{7}{2T}}  \mid 1 \leq i \leq N \}$ where $N = \lceil \frac{1}{2} \log_2 (1 + 4T/7) \rceil + 1$ and $\lambda = \sqrt{2/(T M_1^2 D^2)}$.
Then for any comparator sequence $\uu \in \C{K}^T$, we have
\begin{align*}
\C{R}_{1, \op{Adv}_1^{\t{f}}(\BF{L})}^\mathtt{IA}(\uu)
    &= O( M_1 \sqrt{T(1 + P_T(\uu))} ).
\end{align*}
\end{theorem}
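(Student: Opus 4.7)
The plan is to invoke the standard two-level analysis of Improved Ader, specialized to the linear setting. Decompose the dynamic regret of $\mathtt{IA}$ against a comparator sequence $\uu$ as
\[
\sum_{t=1}^T l_t(\uu_t) - \sum_{t=1}^T l_t(\x_t)
= \underbrace{\sum_{t=1}^T l_t(\x_t^{\eta^\star}) - \sum_{t=1}^T l_t(\x_t)}_{\text{meta-regret w.r.t.\ expert } E^{\eta^\star}}
+ \underbrace{\sum_{t=1}^T l_t(\uu_t) - \sum_{t=1}^T l_t(\x_t^{\eta^\star})}_{\text{dynamic regret of expert } E^{\eta^\star}},
\]
where $\eta^\star \in \C{H}$ is a grid point chosen (after the fact) to nearly match the optimal fixed step size for the path length $P_T(\uu)$. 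Because $l_t$ is linear, $l_t(\x_t) = \sum_\eta w_t^\eta l_t(\x_t^\eta)$ holds exactly, so no convexity slack is introduced at this step.

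For the meta-regret, I would run the standard Hedge-style analysis on the linear losses $l_t(\x_t^\eta)$ with the prior $w_1^{\eta_i} = C/(i(i+1))$. Since $|l_t(\x_t^\eta) - l_t(\x_t)| \leq M_1 D$, the cumulative loss of the mixture exceeds that of the $i$-th expert by at most $(\log(1/w_1^{\eta_i}))/\lambda + \lambda T M_1^2 D^2 / 2 \leq (2\log(i+1) + O(1))/\lambda + \lambda T M_1^2 D^2/2$. With $\lambda = \sqrt{2/(T M_1^2 D^2)}$ and $N = O(\log T)$, this yields meta-regret at most $O(M_1 D\sqrt{T \log(1+i)}) = O(M_1 D\sqrt{T \log\log T})$ against any specific expert.

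For the expert regret, each $E^\eta$ is projected online gradient ascent with fixed step $\eta$ on linear losses. The standard telescoping argument
\[
\|\x_{t+1}^\eta - \uu_t\|^2 \leq \|\x_t^\eta - \uu_t\|^2 - 2\eta\bigl(l_t(\uu_t) - l_t(\x_t^\eta)\bigr) + \eta^2 M_1^2,
\]
followed by the bookkeeping $\|\x_{t+1}^\eta - \uu_{t+1}\|^2 \leq \|\x_{t+1}^\eta - \uu_t\|^2 + 2D\|\uu_t - \uu_{t+1}\|$ and summation, yields an expert regret bound of $\frac{7 D^2 + 4 D P_T(\uu)}{4\eta} + \frac{\eta T M_1^2}{2}$. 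The optimal fixed step size is $\eta^\star = \frac{D}{M_1}\sqrt{(7/2 + 2 P_T/D)/T}$, which lies between $\eta_1$ and $\eta_N$ by the choice of $N$, and the doubling grid $\C{H}$ guarantees some $\eta_i \in [\eta^\star/2, \eta^\star]$. Plugging in gives expert regret $O(M_1\sqrt{T(D^2 + D P_T(\uu))}) = O(M_1\sqrt{T(1 + P_T(\uu))})$ (with $D$ absorbed into the constant).

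Summing the two bounds gives the claimed $O(M_1\sqrt{T(1 + P_T(\uu))})$. The main obstacle is purely bookkeeping: verifying that the specific choices of $\C{H}$, $N$, and $\lambda$ given in the statement really do cover the full admissible range of $P_T \in [0, T D]$ by the geometric grid, and that the logarithmic $\sqrt{\log\log T}$ from the meta-regret is dominated by $\sqrt{T(1+P_T)}$. This is essentially the result of Zhang et al.~\cite{zhang18_adapt_onlin_learn_dynam_envir}, which is proved for general convex functions and therefore applies verbatim to the linear case $\BF{L}$; since the algorithm is deterministic and the adversary is first-order, Theorem~1 of \cite{pedramfar24_unified_framew_analy_meta_onlin_convex_optim} lets us pass from an oblivious bound to the stated bound over $\op{Adv}_1^{\t{f}}(\BF{L})$.
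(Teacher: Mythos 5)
Your proposal is correct and takes essentially the same route as the paper: the paper's proof simply observes that the stated bound is the linear special case of Theorem~4 of \cite{zhang18_adapt_onlin_learn_dynam_envir} (with a translation argument to dispense with the origin-containment assumption) and then passes from the oblivious adversary to $\op{Adv}_1^{\t{f}}(\BF{L})$ via Theorem~1 of \cite{pedramfar24_unified_framew_analy_meta_onlin_convex_optim} since $\mathtt{IA}$ is deterministic --- exactly the two ingredients your closing paragraph invokes. Your explicit meta-regret/expert-regret decomposition is a faithful (and correct) unfolding of the Zhang et al.\ analysis, but beyond the citation it is not needed.
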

\begin{proof}
If we use the oblivious adversary $\op{Adv}_1^\t{o}(\BF{L})$ instead, this theorem is simply a restatement of the special case (i.e. when the functions are linear) of Theorem~4 in~\cite{zhang18_adapt_onlin_learn_dynam_envir}.
\footnote{We note that although Theorem~4 in~\cite{zhang18_adapt_onlin_learn_dynam_envir} assumes that the convex set contains the origin, this assumption is not really needed.
In fact, for any arbitrary convex set, we may first translate it to contain the origin, apply Theorem~4 and then translate it back to obtain the results for the original convex set.
}
Since the algorithm is deterministic, according to Theorem~1 in~\cite{pedramfar24_unified_framew_analy_meta_onlin_convex_optim}, the regret bound remains unchanged when we replace $\op{Adv}_1^\t{o}(\BF{L})$ with $\op{Adv}_1^{\t{f}}(\BF{L})$.
\end{proof}

The following corollary is an immediate consequence of the above theorem and Theorems~\ref{thm:dr_mono_general:main},~\ref{thm:dr_mono_zero:main},~\ref{thm:dr_nonmono:main}, and Corollaries~\ref{cor:first-order-to-zero-order} and~\ref{cor:first-order-to-det-zero-order}.

Note that we do not use the meta-algorithm $\mathtt{OTB}$ since Improved Ader is designed for non-stationary regret and does not offer any advantages in the offline case.
On the other hand, we do not use the meta-algorithm $\mathtt{SFTT}$ in this case since Theorem~\ref{thm:stoch-fi-to-sb} is only for the setting where the comparator is $\C{K}_*^T$ and does not allow us to convert bounds for dynamic regret.

\begin{corollary}\label{cor:d-reg}
Let $\mathtt{IA}$ denote ``Improved Ader'' described above.
Then the following are true.
\begin{enumerate}[label=\alph*)]
\item Under the assumptions of Theorem~\ref{thm:dr_mono_general:main}, we have:
\begin{align*}
\C{R}_{\frac{\gamma^2}{1 + c\gamma^2}, \op{Adv}_1^{\t{f}}(\BF{F})}^\mathtt{IA}(\uu)
    &= O( M_1 \sqrt{T(1 + P_T(\uu))} ), \\
\C{R}_{\frac{\gamma^2}{1 + c\gamma^2}, \op{Adv}_1^\t{o}(\BF{F}, B_1)}^\mathtt{IA}(\uu)
    &= O( B_1 \sqrt{T(1 + P_T(\uu))} ), \\
\C{R}_{\frac{\gamma^2}{1 + c\gamma^2}, \op{Adv}_0^\t{o}(\BF{F})}^{\mathtt{FOTZO}(\mathtt{IA})}(\uu)
    &= O( M_1 \sqrt{T(1 + P_T(\uu))} ).
\end{align*}
If we also assume $\BF{F}$ is bounded by $M_0$ and $B_0 \geq M_0$, then
\begin{align*}
\C{R}_{\frac{\gamma^2}{1 + c\gamma^2}, \op{Adv}_0^\t{o}(\BF{F}, B_0)}^{\mathtt{STB}(\mathtt{IA})}(\uu)
    &= O( B_0 T^{3/4} (1 + P_T(\uu))^{1/2} ).
\end{align*}
\item Under the assumptions of Theorem~\ref{thm:dr_mono_zero:main}, we have:
\begin{align*}
\C{R}_{1 - e^{-\gamma}, \op{Adv}_1^\t{o}(\BF{F}, B_1)}^{\C{A}}(\uu)
    &= O( B_1 \sqrt{T(1 + P_T(\uu))} ) \\
\C{R}_{1 - e^{-\gamma}, \op{Adv}_0^\t{o}(\BF{F})}^{\mathtt{FOTZO}(\mathtt{IA})}(\uu)
    &= O( M_1 \sqrt{T(1 + P_T(\uu))} ).
\end{align*}
where $\C{A} = \mathtt{OMBQ}(\mathtt{IA}, \mathtt{BQM0}, \op{Id})$.
Note that $\C{A}$ is a first order full-information algorithm that requires a single query per time-step.
If we also assume $\BF{F}$ is bounded by $M_0$ and $B_0 \geq M_0$, then
\begin{align*}
\C{R}_{1 - e^{-\gamma}, \op{Adv}_0^\t{o}(\BF{F}, B_0)}^{ \C{A}_{\op{full-info--0}} }
(\uu)
    &= O( B_0 T^{3/4} (1 + P_T(\uu))^{1/2} ).
\end{align*}
where $\C{A}_{\op{full-info--0}} = \mathtt{FOTZO}(\C{A})$.

\item Under the assumptions of Theorem~\ref{thm:dr_nonmono:main}, we have:
\begin{align*}
\C{R}_{\frac{1 - h}{4}, \op{Adv}_1^\t{o}(\BF{F}, B_1)}^{\C{A}}(\uu)
    &= O( B_1 \sqrt{T(1 + P_T(\uu))} ), \\
\C{R}_{\frac{1 - h}{4}, \op{Adv}_0^\t{o}(\BF{F})}^{\mathtt{FOTZO}(\mathtt{IA})}(\uu)
    &= O( M_1 \sqrt{T(1 + P_T(\uu))} ).
\end{align*}
where $\C{A} = \mathtt{OMBQ}(\mathtt{IA}, \mathtt{BQN}, \x \mapsto \frac{\x_t + \underline{\x}}{2})$.
Note that $\C{A}$ is a first order full-information algorithm that requires a single query per time-step.
If we also assume $\BF{F}$ is bounded by $M_0$ and $B_0 \geq M_0$, then
\begin{align*}
\C{R}_{\frac{1 - h}{4}, \op{Adv}_0^\t{o}(\BF{F}, B_0)}^{\C{A}_{\op{full-info--0}}}(\uu)
    &= O( B_0 T^{3/4} (1 + P_T(\uu))^{1/2} ).
\end{align*}
where $\C{A}_{\op{full-info--0}} = \mathtt{FOTZO}(\C{A})$.
\end{enumerate}
\end{corollary}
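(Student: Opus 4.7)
The plan is to combine three ingredients: the dynamic regret bound of $\mathtt{IA}$ on linear functions (Theorem~\ref{thm:orignial-d-reg}), the quadratizability reductions (Theorems~\ref{thm:dr_mono_general:main}, \ref{thm:dr_mono_zero:main}, \ref{thm:dr_nonmono:main}) applied with $\mu = 0$ so that $\BF{Q}_0[\cdot]$ is a class of linear functions, and the zeroth-order conversions $\mathtt{FOTZO}$ and $\mathtt{STB}$ from Theorem~\ref{thm:first-order-to-zero-order}. Since $\mathtt{IA}$ is a deterministic semi-bandit algorithm, Theorem~\ref{thm:orignial-d-reg} applies against $\op{Adv}_1^{\t{f}}(\BF{L})$ and hence (through the corresponding quadratizability result) against the fully adaptive adversary on the up-concave side.

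First, I would observe that the proof of Theorem~\ref{thm:main} in Appendix~\ref{app:main} is written for an arbitrary compact $\C{U} \subseteq \C{K}^T$, so all three quadratizability theorems immediately yield the same regret reduction for dynamic regret, where $\C{U}$ is the singleton $\{\uu\}$. Thus, for each of the three function classes in parts (a)--(c), the up-concave dynamic regret of the relevant algorithm is bounded, up to the approximation constants $\gamma^2/(1 + c\gamma^2)$, $1 - e^{-\gamma}$, or $(1-h)/4$, by the linear dynamic regret of $\mathtt{IA}$ on $\BF{Q}_0[B_1]$ with the same comparator $\uu$. Plugging this reduction into Theorem~\ref{thm:orignial-d-reg} yields the $O(B_1 \sqrt{T(1 + P_T(\uu))})$ and $O(M_1 \sqrt{T(1 + P_T(\uu))})$ first-order semi-bandit/full-information bounds in each of (a)--(c); note that the upper-linearizable map $\F{g}$ together with $h$ acts only on the algorithm's iterates and leaves the comparator unchanged, so $P_T$ is preserved.

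Next, for the zero-order full-information entries, I would invoke Theorem~\ref{thm:first-order-to-zero-order} for $\mathtt{FOTZO}$. Its statement is already in terms of a general compact $\C{U}$ and a shrunk comparator set $\hat{\C{U}} = (1 - \delta/r)\C{U} + (\delta/r)\BF{c}$. For the singleton $\{\uu\}$, the shrunk comparator is $\{\hat{\uu}\}$ with $P_T(\hat{\uu}) = (1 - \delta/r) P_T(\uu) \leq P_T(\uu)$, so the inner first-order bound is $O((B_0/\delta)\sqrt{T(1 + P_T(\uu))})$ after substituting $B_1 = (k/\delta) B_0$. Balancing this against the $O((D/r)\delta M_1 T)$ discretization penalty by setting $\delta \asymp T^{-1/4}(1 + P_T)^{1/4}$ gives the stated $O(B_0 T^{3/4}(1 + P_T(\uu))^{1/2})$ bound in parts (a), (b), and (c). For the bandit entry in (a), I would use $\mathtt{STB}$ in place of $\mathtt{FOTZO}$; Theorem~\ref{thm:first-order-to-zero-order} gives the same regret inequality for both meta-algorithms, so the same balance yields the same bound.

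The main obstacle is bookkeeping rather than any new inequality: I must track the comparator set $\C{U}$ through each meta-algorithm layer, confirm that the path length translates cleanly (it shrinks but never grows under the affine contraction $\uu \mapsto (1 - \delta/r)\uu + (\delta/r)\BF{c}$), and verify that every auxiliary lemma is stated with enough generality in $\C{U}$. Since the existing proofs of Theorems~\ref{thm:main} and~\ref{thm:first-order-to-zero-order} are written in exactly this generality, the corollary follows by a direct chain of substitutions, with the balancing parameter $\delta$ chosen analogously to (but slightly different from) Corollary~\ref{cor:first-order-to-zero-order}, to account for the $(1 + P_T(\uu))^{1/2}$ factor in the first-order dynamic regret.
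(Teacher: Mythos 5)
Your overall route is the same as the paper's: combine the dynamic regret bound of $\mathtt{IA}$ on linear losses (Theorem~\ref{thm:orignial-d-reg}) with the quadratizability reductions of Theorems~\ref{thm:dr_mono_general:main},~\ref{thm:dr_mono_zero:main} and~\ref{thm:dr_nonmono:main} (whose proofs, via Theorem~\ref{thm:main}, indeed hold for an arbitrary compact comparator set, hence for a singleton $\{\uu\}$), and then layer $\mathtt{FOTZO}$/$\mathtt{STB}$ on top via Theorem~\ref{thm:first-order-to-zero-order}, checking that the affine contraction of the comparator does not increase $P_T$. Those parts are fine and match the paper.

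However, there is a genuine gap: you never prove the deterministic zeroth-order entries $\C{R}_{\alpha,\op{Adv}_0^\t{o}(\BF{F})}(\uu) = O(M_1\sqrt{T(1+P_T(\uu))})$ that appear in all three parts. These \emph{cannot} be obtained from the single-point estimator of Theorem~\ref{thm:first-order-to-zero-order}: there the induced first-order oracle is only bounded by $\tfrac{k}{\delta}B_0$, so the inner regret scales like $\tfrac{1}{\delta}\sqrt{T(1+P_T)}$ against a penalty $\delta M_1 T$, and no choice of $\delta$ does better than $T^{3/4}$. The paper instead uses the two-point estimator $\mathtt{FOTZO\textrm{-}2P}$ (Theorem~\ref{thm:first-order-to-det-zero-order} together with Corollary~\ref{cor:first-order-to-det-zero-order}), where the deterministic value oracle yields a gradient estimate bounded by $k M_1$ \emph{independently of} $\delta$, so one can take $\delta = T^{-1}$, pay only an $O(M_1)$ smoothing penalty, and retain the first-order rate $O(M_1\sqrt{T(1+P_T)})$. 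Your proposal must be amended to invoke this second conversion for those entries. A smaller point: your balancing choice $\delta \asymp T^{-1/4}(1+P_T(\uu))^{1/4}$ is not legitimate, since $\delta$ is an algorithm parameter and the comparator's path length is unknown to the algorithm; the fix is simply $\delta = T^{-1/4}$ (as in Corollary~\ref{cor:first-order-to-zero-order} with $\eta = 1/2$), which already yields the stated $O(B_0 T^{3/4}(1+P_T(\uu))^{1/2})$ bounds for the stochastic zeroth-order and bandit entries.
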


\end{document}